\documentclass{article}
\usepackage[utf8]{inputenc}
\usepackage[margin=1in]{geometry} 
\usepackage{graphicx,epstopdf}


\usepackage[utf8]{inputenc}
\usepackage[english]{babel}
\usepackage{amsmath,amssymb,epsfig,amsthm}
\usepackage{graphicx}
\usepackage{comment}
\usepackage{array}
\usepackage{algorithm}
\usepackage{enumerate}
\usepackage{xcolor}
\usepackage{algpseudocode}
\usepackage{xurl}
\usepackage{bbm}
\usepackage[round]{natbib}
\usepackage{graphicx}
\usepackage{subcaption}
\usepackage{algpseudocode}
\usepackage{tikz}
\usetikzlibrary{matrix,positioning,quotes}

\def\calZ{\mathcal{Z}}

\def\id{\operatorname{Id}}
\bibliographystyle{plainnat}

\newtheorem{theorem}{Theorem}

\newtheorem{claim}{Claim}

\newtheorem{corollary}{Corollary}

\newtheorem{lemma}{Lemma}

\newtheorem{proposition}{Proposition}
\newtheorem{remark}{Remark}

\numberwithin{equation}{section}


\newcommand{\calB}{\ensuremath{\mathcal{B}}}

\newcommand{\calH}{\ensuremath{\mathcal{H}}}

\newcommand{\calP}{\ensuremath{\mathcal{P}}}

\newcommand{\calS}{\ensuremath{\mathcal{S}}}

\newcommand{\calN}{\ensuremath{\mathcal{N}}}

\newcommand{\calE}{\ensuremath{\mathcal{E}}}


\newcommand{\norm}[1]{\left\|{#1}\right\|}

\newcommand{\ceil}[1]{\lceil{#1}\rceil}

\newcommand{\expec}{\ensuremath{\mathbb{E}}}
\newcommand{\matR}{\ensuremath{\mathbb{R}}}

\newcommand{\argmax}[1]{\underset{#1}{\operatorname{argmax}}}
\newcommand{\argmin}[1]{\underset{#1}{\operatorname{argmin}}}

\newcommand{\prob}{\ensuremath{\mathbb{P}}}


\newcommand{\indic}{\ensuremath{\mathbf{1}}} 


%

\newcommand{\ppm}{\texttt{PPMGM}}

\newcommand{\rev}[1]{\textcolor{black}{#1}}
\newcommand{\erdos}{Erd\H{o}s}

\newcommand{\revb}[1]{\textcolor{black}{#1}}
\begin{document}

\title{Seeded graph matching for the correlated \rev{Gaussian} Wigner  model via the projected power method}

\author{Ernesto Araya\footnotemark[1]\\ \texttt{ernesto-javier.araya-valdivia@inria.fr} \and Guillaume Braun \footnotemark[2]\\  \texttt{guillaume.braun@riken.jp} \and Hemant Tyagi \footnotemark[1]\\  \texttt{hemant.tyagi@inria.fr}}

\renewcommand{\thefootnote}{\fnsymbol{footnote}}
\footnotetext[1]{Inria, Univ. Lille, CNRS, UMR 8524 - Laboratoire Paul Painlev\'{e}, F-59000 }
\footnotetext[2]{ Riken - AIP, Tokyo}

\renewcommand{\thefootnote}{\arabic{footnote}}


\maketitle

\begin{abstract}
In the \emph{graph matching} problem we observe two graphs $G,H$ and the goal is to find an assignment (or matching) between their vertices such that some measure of edge agreement is maximized. We assume in this work that the observed pair $G,H$ has been drawn from the Correlated Gaussian Wigner (CGW) model -- a popular model for correlated weighted graphs -- where the entries of the adjacency matrices of $G$ and $H$ are independent Gaussians and each edge of $G$ is correlated with one edge of $H$ (determined by the unknown matching) with the edge correlation  described by a parameter $\sigma\in [0,1)$. In this paper, we analyse the performance of the \emph{projected power method} (PPM) as a \emph{seeded} graph matching algorithm where we are given an initial partially correct matching (called the seed) as side information. We prove that if the seed is close enough to the ground-truth matching, then with high probability, PPM iteratively improves the seed and recovers the ground-truth matching (either partially or exactly) in $\mathcal{O}(\log n)$ iterations. Our results prove that PPM works even in regimes of constant $\sigma$, thus extending the analysis in \citep{MaoRud} for the sparse Correlated \erdos-Renyi (CER) model to the (dense) CGW model. As a byproduct of our analysis, we see that the PPM framework generalizes some of the state-of-art algorithms for seeded graph matching. We support and complement our theoretical findings with numerical experiments on synthetic data. 
\end{abstract}

\section{Introduction}\label{sec:intro}
In the \emph{graph matching problem} we are given as input two graphs $G$ and $H$ with an equal number of vertices, and the objective is to find a bijective function, or \emph{matching}, between the vertices of $G$ and $H$ such that the alignment between the edges of $G$ and $H$ is maximized. This problem appears in many applications such as computer vision \citep{sunfei}, network de-anonymization \citep{nay}, pattern recognition \citep{conte, streib}, protein-protein interactions and computational biology \citep{zasla,singh}. In computer vision, for example, it is used as a method of comparing two objects (or images) encoded as graph structures or to identify the correspondence between the points of two discretized images of the same object at different times. In network de-anonymization, the goal is to learn information about an anonymized (unlabeled) graph using a related labeled graph as a reference, exploiting their structural similarities. In \citep{naya2} for example, the authors show that it was possible to effectively de-anonymize the Netflix database using the IMDb (Internet Movie Database) as the ``reference'' network. 

The graph matching problem is well defined for any pair of graphs (weighted or unweighted) and it can be framed as an instance of the NP-hard \emph{quadratic assignment problem} (QAP) \citep{QAPhard2014}. It also contains the ubiquitous \emph{graph isomorphism} (with unknown complexity) as a special case. However, in the average case situation, many polynomial time algorithms have recently been shown to recover, either perfectly or partially, the ground-truth vertex matching with high probability. It is thus customary to assume that the observed graphs $G,H$ are generated by a model for correlated random graphs, where the problem can be efficiently solved. The two most popular models are the correlated Correlated \erdos-R\'enyi \revb{(CER)} model \citep{PedGloss}, where two graphs are independently sampled from an \erdos-R\'enyi mother graph, and the Correlated Gaussian Wigner \revb{(CGW)} model \citep{deg_prof,Grampa}, which considers that $G,H$ are complete weighted graphs with independent Gaussian entries on each edge; see Section \ref{sec:RGmodels} for a precise description. Recently, other models of correlation have been proposed for random graphs with a latent geometric structure \citep{geo_1,geo_2}, community structure \citep{AniRac} and with power law degree profile \citep{Powerlaw}.  

In this work, we will focus on the \emph{seeded} version of the problem, where side information about the matching is provided (together with the two graphs $G$ and $H$) by a partially correct bijective map from the vertices of $G$ to the vertices of $H$ referred to as the seed.
The quality of the seed can be measured by its overlap with the ground-truth matching. This  definition of a seed is more general than what is often considered in the literature \citep{MosselXu}, including the notion of a partially correct (or noisy) seed \citep{LubSri,YuXuLin}. The seeded version of the problem is motivated by the fact that in many applications, a set of correctly matched vertices is usually available -- either as prior information, or it can be constructed by hand (or via an algorithm). From a computational point of view, seeded algorithms are also more efficient than seedless algorithms (see the related work section).
Several algorithms, based on different techniques, have been proposed for seeded graph matching. In \citep{PedGloss,YarGross}, the authors use a percolation-based method to ``grow'' the seed to recover (at least partially) the ground-truth matching. Other algorithms \citep{LubSri,YuXuLin} construct a similarity matrix between the vertices of both graphs and then solve the maximum linear assignment problem (either optimally or by a greedy approach) using the similarity matrix as the cost matrix. The latter strategy has also been successfully applied in the case described below when no side information is provided. \cite{MaoRud} also analyzed an iterative refinement algorithm to achieve exact recovery under the CER model with a constant level of correlation. However, all of the previously mentioned work focuses on binary (and often sparse) graphs while in a lot of applications, for example, protein-to-protein interaction networks or image matching, the graphs of interest are weighted and sometimes dense.


\paragraph{Contributions.} The main contributions of this paper are summarized below. To our knowledge, these are the first theoretical results for seeded \emph{weighted} graph matching. 
\begin{itemize}
    
    \item We analyze a variant of the \emph{projected power method} (PPM)  for the seeded graph matching problem in the context of the CGW model.
    We provide (see Theorems \ref{prop:one_it_conv}, \ref{prop:partial_rec}) exact and partial recovery guarantees under the CGW model when the PPM is initialized with a given data-independent seed, and only one iteration of the PPM algorithm is performed. For this result to hold, it suffices that the overlap of the seed with the ground-truth permutation is $\Omega(\sqrt{n \log n})$. 
    
    \item We prove (see Theorem \ref{thm:unif_rec_ppm}) that when multiple iterations are allowed, then PPM converges to the ground-truth matching in $\mathcal{O}(\log n)$ iterations  provided that it is initialized with a seed with overlap $\Omega\big((1-\kappa)n\big)$, for a constant $\kappa$ small enough, even if the initialization is data-dependent (i.e. a function of the graphs to be matched) or adversarial. This extends the results in \citep{MaoRud} from the sparse \erdos-R\'enyi setting, to the dense Gaussian Wigner case. 
    
    \item We complement our theoretical results with experiments on synthetic data, showing that PPM can help to significantly improve the accuracy of the matching (for the Correlated Gaussian Wigner model) compared to that obtained by a standalone application of existing seedless methods.
\end{itemize}


\subsection{Notation}\label{sec:notation}
We denote $\calP_n$ to be the set of permutation matrices of size $n\times n$ and $\calS_n$ the set of permutation maps on the set $[n]=\{1,\cdots,n\}$. To each element $X \in \calP_n$ (we reserve capital letters for its matrix form), there corresponds one and only one element $x\in \calS_n$ (we use lowercase letters when referring to functions). We denote $\operatorname{Id}$ (resp. $\operatorname{id}$) the identity matrix (resp. identity permutation), where the size will be clear from the context. For $X\in \calP_n$($x\in\calS_n$), we define $S_X=\{i\in[n]:X_{ii}=1\}$ to be the set of fixed points of $X$, and $s_x=|S_X|/n$ its fraction of fixed points. The symbols $\langle \cdot,\cdot\rangle_F$, and $\|\cdot\|_F$ denote the Frobenius inner product and its induced matrix norm, respectively. For any matrix $X\in\mathbb{R}^{n\times n}$, let $[X]\in \mathbb{R}^{n^2}$ denote its vectorization obtained by stacking its columns one on top of another. For two random variables $X,Y$ we write $X\stackrel{d}{=}Y$ when they are equal in law. For a matrix $A\in \mathbb{R}^{n\times n}$, $A_{i:}$ (resp. $A_{:i}$) will denote its $i$-th row (resp. column). 

\subsection{Mathematical description}
Let $A,B$ be the adjacency matrices of the graphs $G,H$ each with $n$ vertices. In the graph matching problem, the goal is to find the solution of the following optimization problem
\begin{equation}\label{form:1}
    \max_{x\in \mathcal{S}_n}\sum_{i,j}A_{ij}B_{x(i)x(j)} \enskip\tag{P1}
\end{equation}
 which is equivalent to solving
\begin{equation}\label{form:1'}
    \max_{X\in \mathcal{P}_n}\langle A,X BX^\top\rangle_F. \tag{P1'}
\end{equation}
Observe that \eqref{form:1} is a well-defined problem -- not only for adjacency matrices -- but for any pair of matrices of the same size. In particular, it is well-defined when $A,B$ are adjacency matrices of weighted graphs, which is the main setting of this paper. Moreover, this is an instance of the well-known \emph{quadratic assignment problem}, which is a combinatorial optimization problem known to be NP-hard in the worst case \citep{QAP}. 
Another equivalent formulation of \eqref{form:1} is given by the following ``lifted'' (or vector) version of the problem

\begin{equation}
    \label{form:1''}
    \max_{[X]\in [\mathcal{P}_n]}[X]^\top(B\otimes A)[X]\tag{P1''}
\end{equation}
where $[\mathcal{P}_n]$ is the set of permutation matrices in vector form. This form has been already considered in the literature, notably in the family of spectral methods \citep{Villar,spec_align}.


\subsection{Statistical models for correlated random graphs}\label{sec:RGmodels}
Most of the theoretical statistical analysis for the graph matching problem has been performed so far under two random graph models: the \emph{Correlated \erdos-R\'enyi} and the \emph{Correlated Gaussian Wigner models}. In these models the dependence between the two graphs $A$ and $B$ is explicitly described by the inclusion of a ``noise'' parameter which captures the degree of correlation between $A$ and $B$. 

\paragraph{Correlated Gaussian Wigner (CGW) model $W(n,\sigma,x^*)$.} The problem \eqref{form:1} is well-defined for matrices that are not necessarily $0/1$ graph adjacencies, so a natural extension is to consider two complete weighted graphs. The following Gaussian model has been proposed in \citep{deg_prof}  \[A_{ij}\sim\begin{cases}\calN(0,\frac1n)\text{ if }i<j, \\ \calN(0,\frac2n)\text{ if } i= j,
\end{cases}\]
$A_{ij}=A_{ji}$ for all $i,j\in [n]$, and $B_{x^*(i)x^*(j)}=\sqrt{1-\sigma^2}A_{ij}+\sigma Z_{ij}$, where  $Z\stackrel{d}{=}A$. Both $A$ and $B$ are distributed as the GOE (Gaussian orthogonal ensemble). Here the parameter $\sigma>0$ should be interpreted as the noise parameter and in that sense, $B$ can be regarded as a ``noisy perturbation'' of $A$. Moreover, $x^* \in \calS_n$ is the ground-truth (or latent) permutation that we seek to recover. It is not difficult to verify that the problem \eqref{form:1} is in fact the maximum likelihood estimator (MLE) of $x^*$ under the CGW model.

\paragraph{Correlated \erdos-R\'enyi (CER) model $G(n,q,s,x^*)$}. For $q,s\in[0,1]$, the correlated \erdos-R\'enyi model with latent permutation $x^*\in \calS_n$ can be described in two steps.
\begin{enumerate}
    \item $A$ is generated according to the \erdos-R\'enyi model $G(n,q)$, i.e. for all $i<j$, $A_{ij}$ is sampled from independent Bernoulli's r.v. with parameter $q$, $A_{ji}=A_{ij}$ and $A_{ii}=0$.
 
    \item Conditionally on $A$, the entries of $B$ are i.i.d according to the law %
    \begin{equation}\label{eq: ER_def}
        B_{x^*(i),x^*(j)}\sim\begin{cases}
        Bern(s)\quad \text{if}\quad A_{ij}=1,\\
        Bern\big(\frac{q}{1-q}(1-s)\big)\quad \text{if } A_{ij}=0.
        \end{cases}
    \end{equation}
\end{enumerate}
There is another equivalent description of this model in the literature, where to obtain CER graphs, we first sample an \erdos-R\'enyi ``mother'' graph and then define $A,B$ as independent subsamples with certain density parameter. We refer to \citep{PedGloss} for details.


\subsection{Related work} \label{sec:rel_work}
%


\paragraph{Information-theoretic limits of graph matching.} The necessary and sufficient conditions for correctly estimating the matching between two graphs when they are generated from the CGW or the CER model have been investigated in \citep{CullKi,HallMass,recons_thr}. In particular,  for the CGW model, it has been shown in \citep[Thm.1]{recons_thr} that the ground truth permutation $x^*$ can be exactly recovered w.h.p. only when $\sigma^2\leq 1-\frac{(4+\epsilon)\log n}{n}$. When $\sigma^2\geq 1-\frac{(4-\epsilon)\log n}{n}$ no algorithm can even partially recover $x^*$. However, it is not known if there is a polynomial time algorithm that can reach this threshold. 

\paragraph{Efficient algorithms}
\begin{itemize}
    \item \textbf{Seedless algorithms.} Several polynomial time algorithms have been proposed relying on spectral methods \citep{Spectral_weighted_Ume,Grampa,ganMass,spec_align,Balanced_GM}, degree profiles \citep{deg_prof,dai_cullina}, other vertex signatures \citep{MaoRud}, random walk based approaches \citep{isorank_1,isorank_2,Gori04graphmatching}, convex and concave relaxations \citep{afla,Lyzin,bach}, and other non-convex methods \citep{YuYan,XuLuo,Villar}. Most of the previous algorithms have theoretical guarantees only in the low noise regime. For instance, the \texttt{Grampa} algorithm proposed in \citep{Grampa} provably exactly recovers the ground truth permutation for the CGW model when $\sigma=\mathcal{O}(\frac1{\log n})$, and in \citep{deg_prof} it is required for the CER (resp. CGW) model that the two graphs differ by at most $1-s=\mathcal{O}(\frac1{\log^2 n})$ fraction of edges (resp. $\sigma=O(\frac{1}{\log n})$).  
    There are only two exceptions for the CER model where the noise level is constant: the work of \citep{ganMass2} and \citep{MaoRud}. But these algorithms exploit the sparsity of the graph in a fundamental manner and cannot be extended to dense graphs.
    
    \item \textbf{Seeded algorithms.} In the seeded case, different kinds of consistency guarantees have been proposed: consistency after one refinement step \citep{YuXuLin,LubSri}, consistency after several refinement steps uniformly over the seed \citep{MaoRud}. For the dense CER ($p$ of constant order), one needs to have an initial seed with $\Omega (\sqrt{n\log n})$ overlap in order to have consistency after one step for a given seed \citep{YuXuLin}. But if we want to have a uniform result, one needs to have a seed that overlaps the ground-truth permutation in $O(n)$ points \citep{MaoRud}. Our results for the CGW model are similar in that respect. Besides, contrary to seedless algorithms, our algorithm works even if the the noise level $\sigma$ is constant.  
\end{itemize}

\paragraph{Projected power method (PPM).} PPM, which is also often referred to as a  \emph{generalized power method} (GPM) in the literature is a family of iterative algorithms for solving constrained optimization problems. It has been used with success for various tasks including clustering SBM \citep{Wang2021OptimalNE}, group synchronization \citep{boumal2016,GaoZhang}, joint alignment from pairwise difference \citep{chen2016_alignment}, low rank-matrix recovery \citep{chi2019} and the generalized orthogonal Procrustes problem \citep{Ling}. It is a useful iterative strategy for solving non-convex optimization problems, and usually  requires a good enough initial estimate. In general, we start with an initial candidate satisfying a set of constraints and at each iteration we perform 
\begin{enumerate}
\item a \emph{power step}, which typically consists in multiplying our initial candidate with one or more data dependent matrices, and  

\item a \emph{projection step} where the result of the power step is projected onto the set of constraints of the optimization problem. 
\end{enumerate}
These two operations are iteratively repeated and often convergence to the ``ground-truth signal'' can be ensured in $\mathcal{O}(\log n)$ iterations, provided that a reasonably good initialization is provided. 

The projected power method (PPM) has also been used to solve the graph matching problem, and its variants, by several authors. In some works, it has been explicitly mentioned \citep{Villar,Hippi}, while in others \citep{MaoRud, YuXuLin,LubSri} very similar algorithms have been proposed without acknowledging the relation with PPM (which we explain in more detail below). All the works that study PPM explicitly do not report statistical guarantees and, to the best of our knowledge, theoretical guarantees have been obtained only in the case of sparse \erdos-R\'enyi graphs, such as in \citep[Thm.B]{MaoRud} in the case of multiple iterations, and \citep{YuXuLin,LubSri} in the case of one iteration. Interestingly, the connection with the PPM is not explicitly stated in any of these works.



\section{Algorithm}\label{sec:alg_res}

\subsection{Projected power method for Graph matching}\label{sec:ppmgm}



We start by defining the projection operator onto $\calP_n$ for a matrix $C\in \matR^{n\times n}$. We will use the greedy maximum weight matching (GMWM) algorithm introduced in \citep{LubSri}, for the problem of graph matching with partially correct seeds, and subsequently used in \citep{YuXuLin}. The steps are outlined in Algorithm \ref{alg:gmwm}. 
\begin{algorithm}
\caption{\texttt{GMWM} (Greedy maximum weight matching)}\label{alg:gmwm}
\begin{algorithmic}[1]
  \Require{A cost matrix $C\in\mathbb{R}^{n\times n}$.}
  \Ensure{A permutation matrix $X$.}
  \State Select $(i_1,j_1)$ such that $C_{i_1,j_1}$ is the largest entry in $C$ (break ties arbitrarily). Define $C^{(1)}\in\mathbb{R}^{n\times n}$: $C^{(1)}_{ij}=C_{ij}\mathbbm{1}_{i\neq i_1,j\neq j_1}-\infty\cdot\mathbbm{1}_{i=i_1\text{or } j= j_1}$.
  
  \For{$k=2$ to $n$}
        \State  Select $(i_k,j_k)$ such that $C^{(k-1)}_{i_k,j_k}$ is the largest entry in $C^{(k-1)}$.
        
        \State Define $C^{(k)}\in\mathbb{R}^{n\times n}$: $C^{(k)}_{ij}=C^{(k-1)}_{ij}\mathbbm{1}_{i\neq i_k,j\neq j_k}-\infty\cdot\mathbbm{1}_{i=i_k\text{or } j= j_k}$.
        
      \EndFor 
      \State Define $X\in \{0,1\}^{n\times n}$: $X_{ij}=\sum^n_{k=1}\mathbbm{1}_{i=i_k,j=j_k}$.
      \State\Return{$X$}
  \end{algorithmic}
\end{algorithm}
Notice that the original version of GMWM works by erasing the row and column of the largest entry of the matrix $C^{(k)}$ at each step $k$. We change this to assign $-\infty$ to each element of the row and column of the largest entry (which is equivalent), mainly to maintain the original indexing. 
The output of Algorithm \ref{alg:gmwm} is clearly a permutation matrix, hence we define 
\begin{equation}\label{eq:projection}
    \tau (C):=\text{Output of GMWM with input } C
\end{equation}
which can be considered a projection since $\tau(\tau(C))=\tau(C)$ for all $C\in\mathbb{R}^{n\times n}$.
Notice that, in general, the output of GMWM will be different from solving the linear assignment problem  
\begin{align*}
\tilde{\tau}(C): =\argmin{}{\{\|C-X\|_F\enskip |\enskip X\in\calP_n\}}  
=\argmax{\Pi\in\calP_n}{\langle \Pi,C \rangle_F}\nonumber
\end{align*}
which provides an orthogonal projection, while $\tau$ corresponds to an oblique projection in general.

\begin{algorithm}
\caption{\texttt{PPMGM} (PPM for graph matching)}\label{alg:ppmgm}
\begin{algorithmic}[1]
  \Require{Matrices $A,B$, an initial point $X^{(0)}$ and $N$ the maximum number of iterations.}
  \Ensure{A permutation matrix $X$.}
 
  \For{$k=0$ to $N-1$}
        \State  $X^{(k+1)} \gets \tau(AX^{(k)}B)$.
  \EndFor 
  \State\Return{$X=X^{(N)}$}
  \end{algorithmic}
\end{algorithm}
The PPM is outlined in Algorithm \ref{alg:ppmgm}. Given the estimate of the permutation $X^{(k)}$ from step $k$, the power step corresponds to the operation $AX^{(k)}B$ while the projection step is given by the application of the projection $\tau$ on $AX^{(k)}B$. 
The similarity  matrix $C^{(k+1)}:=AX^{(k)}B$ is the matrix form of the left multiplication of $[X^{(k)}]$ by the matrix $B\otimes A$. Indeed, given that $A$ and $B$ are symmetric matrices, we have $[AX^{(k)}B]=(B\otimes A)[X^{(k)}]$, by \citep[eqs. 6 and 10]{Schacke}. All previous works related to the PPM for graph matching \citep{Villar}, and its variants \citep{Hippi}, use $(B\otimes A)[X^{(k)}]$ in the power step which is highly inconvenient in practice. This is because the matrix $B\otimes A$ is  expensive to store and do computations with if we follow the naive approach, consisting of computing and storing $B\otimes A$, and using it in the power step (we expand on this in Remark \ref{rem:complexity} below). Also, a power step of the form $AX^{(k)}B$ connects the PPM with the seeded graph matching methods proposed for the CER model  \citep{LubSri,YuXuLin,MaoRud}  where related similarity matrices are used, thus providing a more general framework. Indeed, in those works, the justification for the use of the matrix $AX^{(k)}B$ is related to the notion of witnesses (common neighbors between two vertices), which is an information that can be read in the entries of $AX^{(k)}B$. In our case, the similarity matrix $AX^{(k)}B$ appears naturally as the gradient of the objective function of \eqref{form:1'} and does not require the notion of witnesses (which does not extend automatically to the case of weighted matrices with possibly negative weights). In addition, the set of elements correctly matched by the initial permutation $x^{(0)}\in\calS_n$ will be defined here as the seed of the problem, i.e., we take the set of seeds $S:=\{(i,i'): x^{(0)}(i)=i'\}$. Thus, the number of correct seeds will be the number of elements $i\in [n]$ such that $x^{(0)}(i)=x^*(i)$. Observe that the definition of the seed as a permutation contains less information than the definition of a seed as a set $S$ of bijectively (and correctly) pre-matched vertices, because $S$ can be augmented (arbitrarily) to a permutation and, in addition, by knowing $S$ we have information on which vertices are correctly matched. We mention this as there is a commonly used definition of seed in the literature (see for example \citep{YarGross}) which considers that the set $S$ contains only correctly matched vertices, thus giving more information than what is necessary for our algorithm to work\footnote{In other words, our algorithm does not need to know which are the correct seeds, but only that there is a sufficiently large number of them in the initial permutation.}. The notion of permutation as a partially correct seed that we are using here has been used, for example, in \citep{YuXuLin}.

\paragraph{Initialization.} We prove in Section \ref{sec:conv_analysis} that Algorithm \ref{alg:ppmgm} recovers the ground truth permutation $x^*$ provided that the initialization $x^{(0)}$ is sufficiently close to $x^*$. The initialization assumption will be written in the form
\begin{equation}\label{assumption:init}
    \|X^{(0)}-X^*\|_F\leq \theta \sqrt n
\end{equation}
for some $\theta\in[0,\sqrt 2)$. Here, the value of $\theta$ measures how good $X^{(0)}$ is as a seed. Indeed, \eqref{assumption:init} can be equivalently stated as: the number of correct seeds is larger than $\frac n{\rev{2}}(2-\theta^2)$. The question of finding a good initialization method can be seen as a seedless graph matching problem, where only partial recovery guarantees are necessary. In practice, we can use existing seedless algorithms such as those in \citep{Spectral_weighted_Ume,Grampa,spec_align} to initialize Algorithm \ref{alg:ppmgm}. We compare different initialization methods numerically, in Section \ref{sec:experiments}. 

\begin{remark}[PPM as a gradient method]
The projected power method can be seen as a projected gradient ascent method for solving the MLE formulation in \eqref{form:1'}. 
From the formulation \eqref{form:1''} it is clear that the gradient of the likelihood evaluated on $X\in \calP_n$ is $2(B\otimes A)[X]$ or, equivalently, $2AXB$ in matrix form. This interpretation of PPM has been acknowledged in the context of other statistical problems \citep{jour,chen2016_alignment}. 
\end{remark}
\begin{remark}[Optimality]
Algorithms based on PPM or GPM have been shown to attain optimal, or near-optimal, statistical guarantees for several problems in statistics, including community detection \citep{Wang2021OptimalNE,WangManchoso}, group syncronization \citep{boumal2016,Gao2019IterativeAF} and generalized orthogonal procrustes problem \citep{Ling}. 
\end{remark}
\begin{remark}[Complexity]\label{rem:complexity}
The computational time complexity of Algorithm \ref{alg:ppmgm} is $\mathcal{O}(n^\omega\log{n}+n^2\log^2{n})$, where $\mathcal{O}(n^\omega)$ is the matrix multiplication complexity and $\mathcal{O}(n^2\log{n})$ is the complexity of Algorithm \ref{alg:gmwm} \citep{YuXuLin}. In \citep{Le_Gall}, the authors establish the bound $\omega\leq 2.373$. Notice that, in comparison to our algorithm, any algorithm using $B\otimes A$ in a naive way that involves first computing and storing $B\otimes A$ and then doing multiplications with it, has a time complexity at least $n^4$ (the cost of computing $B\otimes A$ when $A,B$ are dense matrices) 

\end{remark}


%
\section{Main results}\label{sec:conv_analysis}
Our goal in this section is to prove recovery guarantees for Algorithm \ref{alg:ppmgm} when the input matrices $A,B$ are realizations of the correlated Wigner model, described earlier in Section \ref{sec:RGmodels}. 
In what follows, we will assume without loss of generality that $X^*=\id$. Indeed, this can be seen by noting that for any permutation matrix $X$, and $C \in \mathbb{R}^{n \times n}$, it holds that $\tau(C X) = \tau(C) X$. This means that if we permute the rows and columns of $B$ by $X$ (by replacing $B$ with $X B X^\top$), and replace $X^{(0)}$ by $X^{(0)} X^\top$, then the output of Algorithm \ref{alg:ppmgm} will be given by the iterates $(X^{(k)} X^\top)_{k=1}$.

\subsection{Exact recovery in one iteration}\label{sec:mainstep1}
For any given seed $x^{(0)}$ that is close enough to $x^*$, the main result of this section states that $x^*$ is recovered exactly in one iteration of Algorithm \ref{alg:ppmgm} with high probability. Let us first introduce the following definition: we say that a matrix $M$ is diagonally dominant\footnote{This is weaker than the usual notion of diagonal dominance, where for all $i\in [n]$,  $|M_{ii}|\geq \sum_{j\neq i}|M_{ij}|$.} if for all $i,j$ with $i\neq j$ we have $M_{ii}>M_{ij}$. This notion will be used in conjunction with the following lemma, its proof is in Appendix \ref{app:proofs_lem_diagdom}.
\begin{lemma}\label{lem:diagdom_LAP}
 If a matrix $C$ satisfies the diagonal dominance property, then the greedy algorithm \texttt{GMWM} with input $C$ will return the identical permutation. Consequently, if $A,B\sim W(n,\sigma,\operatorname{id})$, then for $C=AXB$ and $\Pi=\tau(C)$, we have \begin{equation}\label{eq:probneqId}
    \prob(\Pi\neq \id)\leq \prob(C \textup{ is not diag. dominant})
\end{equation}  
\end{lemma}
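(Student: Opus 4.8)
The plan is to reduce both claims to a single combinatorial fact: \texttt{GMWM} (Algorithm \ref{alg:gmwm}, run with $N=n$ so that its output $\tau(\cdot)$ is a genuine permutation) returns $\id$ on any matrix $C$ with the weak diagonal dominance property $C_{ii}>C_{ij}$ for all $i\neq j$. The inequality \eqref{eq:probneqId} is then the contrapositive composed with monotonicity of $\prob$, so essentially all the work is in this deterministic statement.

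To prove it I would induct on the iteration index $k$ of Algorithm \ref{alg:gmwm}, carrying the invariant: after step $k$ the chosen pairs satisfy $i_1=j_1,\dots,i_k=j_k$, and the principal submatrix of $C$ indexed by the surviving set $R_k:=[n]\setminus\{i_1,\dots,i_k\}$ still has the diagonal dominance property. Both the base case and the inductive step rest on one observation: in a matrix with $C_{ii}>C_{ij}$ for $i\neq j$, no off-diagonal entry can be a maximal entry (not even under ties), since each such $C_{ij}$ is strictly below $C_{ii}$. At step $k+1$ one maximizes over $C^{(k)}$, whose only finite entries are $\{C_{ij}:i,j\in R_k\}$ — the masking in Algorithm \ref{alg:gmwm} has set the rows in $\{i_1,\dots,i_k\}$ and the columns in $\{j_1,\dots,j_k\}=\{i_1,\dots,i_k\}$ to $-\infty$; since $R_k\neq\emptyset$ for $k<n$ its diagonal entries are finite, so the maximum is finite and attained within $R_k\times R_k$, hence, by the observation applied to the diagonally dominant submatrix, it is attained on the diagonal, forcing $i_{k+1}=j_{k+1}\in R_k$. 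Deleting row and column $i_{k+1}$ from a diagonally dominant matrix again yields a diagonally dominant matrix, so the invariant persists; running to $k=n$ gives $X_{ij}=\sum_{k=1}^n\mathbbm{1}_{i=j=i_k}$, that is $X=\id$.

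For the second claim, set $C=AXB$ and $\Pi=\tau(C)$ with $A,B\sim W(n,\sigma,\id)$: whenever $C$ is diagonally dominant the first claim gives $\Pi=\id$, so $\{\Pi\neq\id\}\subseteq\{C\text{ is not diagonally dominant}\}$ and \eqref{eq:probneqId} follows by taking probabilities. I do not expect a genuine obstacle here; the points needing a little care are the bookkeeping for the $-\infty$ masking — making precise that the maximizer of the full matrix $C^{(k)}$ lies in the surviving principal submatrix — checking that this submatrix is never entirely $-\infty$ while indices remain unmatched (its diagonal entries are finite, and in the application $C=AXB$ is almost surely finite), and being explicit that it is the \emph{strict} inequality in the footnoted notion of diagonal dominance (not the usual $|C_{ii}|\geq\sum_{j\neq i}|C_{ij}|$) that forces every greedy choice onto the diagonal.
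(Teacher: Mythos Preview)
Your proposal is correct and follows essentially the same approach as the paper: both argue that the global maximum of a diagonally dominant matrix must lie on the diagonal, that removing the corresponding row and column preserves diagonal dominance, and then iterate (you phrase this as a formal induction with an explicit invariant, the paper simply says ``by iterating this argument''); the probability inequality is then obtained by the contrapositive in both. Your version is more careful about the $-\infty$ masking bookkeeping and the role of the strict inequality, but the underlying argument is identical.
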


The next theorem allow us to control the probability that $C$ is not diagonally dominant and, in turn, proves that Algorithm \ref{alg:ppmgm} recovers the ground truth permutation with high probability. The proof of Theorem \ref{prop:one_it_conv} is outlined in in Section \ref{sec:thm_one_it}.
\begin{theorem}\label{prop:one_it_conv}
Let $A,B\sim W(n,\sigma,\operatorname{id})$ and 
 $X\in\calP_n$ with $\|X-\operatorname{Id}\|_F\leq \theta\sqrt{n}$, with $0\leq \theta \leq\sqrt{2(1-\frac{10}n)}$ and $n\geq 10$. Then the following holds.
 \begin{enumerate}[(i)]
     \item For $C=AXB$ we have \[\prob(C \textup{ is not diag. dominant })\leq 5n^2e^{-c(\sigma)\big(1-\frac{\theta^2}{2}\big)^2n}\]
     where $c(\sigma)=\frac1{384}\left(\frac{1-\sigma^2}{1+2\sigma\sqrt{1-\sigma^2}}\right)$.
     
     \item Denote $\Pi$ as the output of Algorithm \ref{alg:ppmgm} with input $(A,B,X^{(0)}=X,N=1)$, then 
 \[\prob(\Pi=\operatorname{Id})\geq 1-5n^2e^{-c(\sigma)\big(1-\frac{\theta^2}2\big)^2n}.\]
 In particular, if $\|X-\operatorname{Id}\|^2_F\leq 2\Big(n-\sqrt{\frac{1}{c(\sigma)}n\log{(5n^3)}}\Big)$ then 
 \[\prob(\Pi=\operatorname{Id})\geq 1-n^{-1}.\]
 \end{enumerate}
 
\end{theorem}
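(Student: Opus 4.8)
The plan is to reduce everything to a union bound over the events $\{C_{ii} \le C_{ij}\}$ for $i \neq j$, where $C = AXB$, and to control each such event by a Gaussian tail bound after identifying the law of the relevant entries. By Lemma~\ref{lem:diagdom_LAP}, it suffices to bound $\prob(C \text{ is not diagonally dominant})$, which by a union bound is at most $\sum_{i \neq j} \prob(C_{ii} \le C_{ij})$, giving roughly $n^2$ terms; so I need each term to be at most (a constant times) $e^{-c(\sigma)(1-\theta^2/2)^2 n}$, and then absorb the polynomial factor into the $5n^2$ prefactor. To analyze a fixed pair $(i,j)$, write $C_{ii} = A_{i:} X B_{:i}$ and $C_{ij} = A_{i:} X B_{:j}$, i.e. $C_{i\ell} = \sum_{a,b} A_{ia} X_{ab} B_{b\ell} = \sum_a A_{ia} B_{x(a)\ell}$ where $x$ is the permutation of $X$. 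The key structural observation (recalling $X^* = \id$ and the CGW model $B_{ab} = \sqrt{1-\sigma^2}A_{ab} + \sigma Z_{ab}$) is that $C_{ii} = \sum_a A_{ia} B_{x(a)i}$, and the terms where $x(a) = a$ contribute $\sqrt{1-\sigma^2}A_{ia}^2 + \sigma A_{ia}Z_{ai}$; the fixed points of $X$ (of which there are at least $n(1 - \theta^2/2)$ by the initialization assumption, since $\|X - \id\|_F^2 = 2(n - |S_X|)$) produce a positive drift of size $\approx \sqrt{1-\sigma^2}\,\frac{|S_X|}{n}$ in $C_{ii}$, whereas $C_{ij}$ for $j \neq i$ has mean zero. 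This drift is what makes $C$ diagonally dominant with high probability.

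Concretely, I would expand $D_{ij} := C_{ii} - C_{ij}$ and separate it into its conditional mean given $A$ (or given the "signal" part) and a fluctuation. The deterministic/drift part is $\expec[D_{ij}\mid A] = \sqrt{1-\sigma^2}\sum_{a \in S_X}A_{ia}^2 \cdot \mathbbm{1}[\cdots] + (\text{mean-zero cross terms})$, which concentrates around $\sqrt{1-\sigma^2}\,\frac{|S_X|}{n} \ge \sqrt{1-\sigma^2}(1 - \theta^2/2)$ by a $\chi^2$-concentration argument on $\sum_{a\in S_X}A_{ia}^2$ (each $A_{ia}^2$ has mean $1/n$). The fluctuation part is a quadratic form in the jointly Gaussian vector $(A_{i:}, Z_{i:}, \text{columns of } B)$; since we are conditioning and the entries $A_{ia}, Z_{ab}$ are all $\mathcal{N}(0, \Theta(1/n))$, the fluctuation is a sum of $O(n)$ products of independent (or weakly dependent) $\mathcal{N}(0,1/n)$-scale Gaussians, hence sub-exponential with variance proxy $O(1/n)$. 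A Bernstein / Hanson–Wright-type tail bound then gives $\prob(D_{ij} \le 0) \le \prob(\text{fluctuation} \le -\text{drift}) \le \exp(-c \cdot \text{drift}^2 \cdot n)$ with drift $\asymp \sqrt{1-\sigma^2}(1-\theta^2/2)$, producing the factor $(1 - \theta^2/2)^2 n$ in the exponent. Tracking the constants through the variance proxy — where the $\sigma\sqrt{1-\sigma^2}$ cross term in the covariance of $A$ and $B$ shows up — should yield exactly $c(\sigma) = \frac{1}{384}\cdot\frac{1-\sigma^2}{1 + 2\sigma\sqrt{1-\sigma^2}}$. Part (ii) is then immediate: the first inequality is Lemma~\ref{lem:diagdom_LAP} combined with part (i), and the "in particular" statement follows by choosing $\theta$ so that $5n^2 e^{-c(\sigma)(1-\theta^2/2)^2 n} \le n^{-1}$, i.e. $c(\sigma)(1-\theta^2/2)^2 n \ge \log(5n^3)$, which rearranges (using $\|X-\id\|_F^2 = 2(n - |S_X|)$ and $1 - \theta^2/2 = |S_X|/n$... more carefully, $\big(1-\tfrac{\theta^2}{2}\big)^2 n \ge \tfrac{1}{c(\sigma)}\log(5n^3)$ iff $1 - \tfrac{\theta^2}{2} \ge \sqrt{\tfrac{\log(5n^3)}{c(\sigma)n}}$ iff $\theta^2 \le 2\big(1 - \sqrt{\tfrac{\log(5n^3)}{c(\sigma)n}}\big)$, i.e. $\|X-\id\|_F^2 \le 2\big(n - \sqrt{\tfrac{1}{c(\sigma)}n\log(5n^3)}\big)$) into the stated condition.

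The main obstacle I anticipate is the careful bookkeeping of the quadratic-form fluctuation and its variance proxy. The cross terms $\sigma A_{ia}Z_{ai}$, the off-diagonal contributions from non-fixed points of $X$ (where $x(a) \neq a$ and $B_{x(a)\ell}$ is correlated with $A_{x(a)\cdot}$ rather than $A_{a\cdot}$), and the dependence between $C_{ii}$ and $C_{ij}$ through the shared row $A_{i:}$ all have to be handled simultaneously in one Gaussian chaos. The cleanest route is probably to condition on $A$ entirely, treat $D_{ij}$ as a function of the independent Gaussian noise $Z$ (linear in $Z$, hence conditionally Gaussian with an explicitly computable conditional variance $\sigma^2 \|A_{i:}\|^2(\text{stuff})$), union-bound over the small-probability event that $\|A_{i:}\|^2$ or $\sum_{a\in S_X}A_{ia}^2$ deviates from its mean, and on the complementary event get a clean conditionally-Gaussian tail. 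One must also verify the diagonal entries $A_{ii} \sim \mathcal{N}(0, 2/n)$ don't cause trouble — they contribute $O(1)$ extra terms and are harmless. Getting the constant $384$ exactly right is a matter of being generous but consistent in each concentration step; I would not expect the precise constant to be load-bearing for the qualitative conclusion.
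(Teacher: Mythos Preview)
Your proposal is essentially correct and follows the same architecture as the paper: union bound over the $n^2$ pairs $(i,j)$, identify the drift $\sqrt{1-\sigma^2}\,|S_X|/n \ge \sqrt{1-\sigma^2}(1-\theta^2/2)$ from the fixed points of $X$, and control the fluctuation by Gaussian-chaos concentration; your derivation of the ``in particular'' clause in part~(ii) is exactly the paper's. Two tactical differences are worth noting. First, the paper does not work with $D_{ij} = C_{ii} - C_{ij}$ directly; instead it introduces a threshold $s_{\sigma,x} := \tfrac12\sqrt{1-\sigma^2}\,s_x$ and bounds $\prob(C_{ii} \le s_{\sigma,x})$ and $\prob(C_{ij} \ge s_{\sigma,x})$ separately, which sidesteps the correlation between $C_{ii}$ and $C_{ij}$ through the shared row $A_{i:}$ that you flag as the main obstacle. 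Second, rather than conditioning on $A$ and invoking Hanson--Wright, the paper splits $C = \sqrt{1-\sigma^2}\,AXA + \sigma\,AXZ$ and rewrites each of the four relevant scalars as a difference of weighted $\chi^2$ variables via the polarization identity, then applies Laurent--Massart; this is where the exact constant $c(\sigma)$ emerges, the denominator $1+2\sigma\sqrt{1-\sigma^2}$ arising from optimizing a free parameter $\delta$ that balances the $AXA$ and $AXZ$ contributions to the diagonal bound. Your conditioning route would also go through, but recovering the precise constant would require more care with the conditional variance of $(AXZ)_{ii}-(AXZ)_{ij}$ given $A$ (the symmetry of $Z$ introduces mild dependence), whereas the threshold-and-split approach is what keeps the paper's bookkeeping clean.
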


\begin{remark}
The assumption $\|X-\operatorname{Id}\|^2_F\leq 2(n-\sqrt{\frac{1}{c(\sigma)}n\log{(5n^3)}})$ can be restated as $|S_X|\geq \sqrt{\frac{1}{c(\sigma)}n\log{5n^3}}$, where $S_X$ is the set of fixed points of $X$. That is, for this assumption to hold, we need that $X$ has a number of fixed points of order $\Omega_\sigma(\sqrt{n\log n})$.
Also note that $c(\sigma)$ is decreasing with $\sigma$, which is consistent with the intuition that larger levels of noise make it more difficult to recover the ground truth permutation. We include a plot of $c(\sigma)$ (rescaled) in Figure \ref{fig:c_2_sig}.
\begin{figure}[!ht]
    \centering
    \includegraphics[scale=0.29]{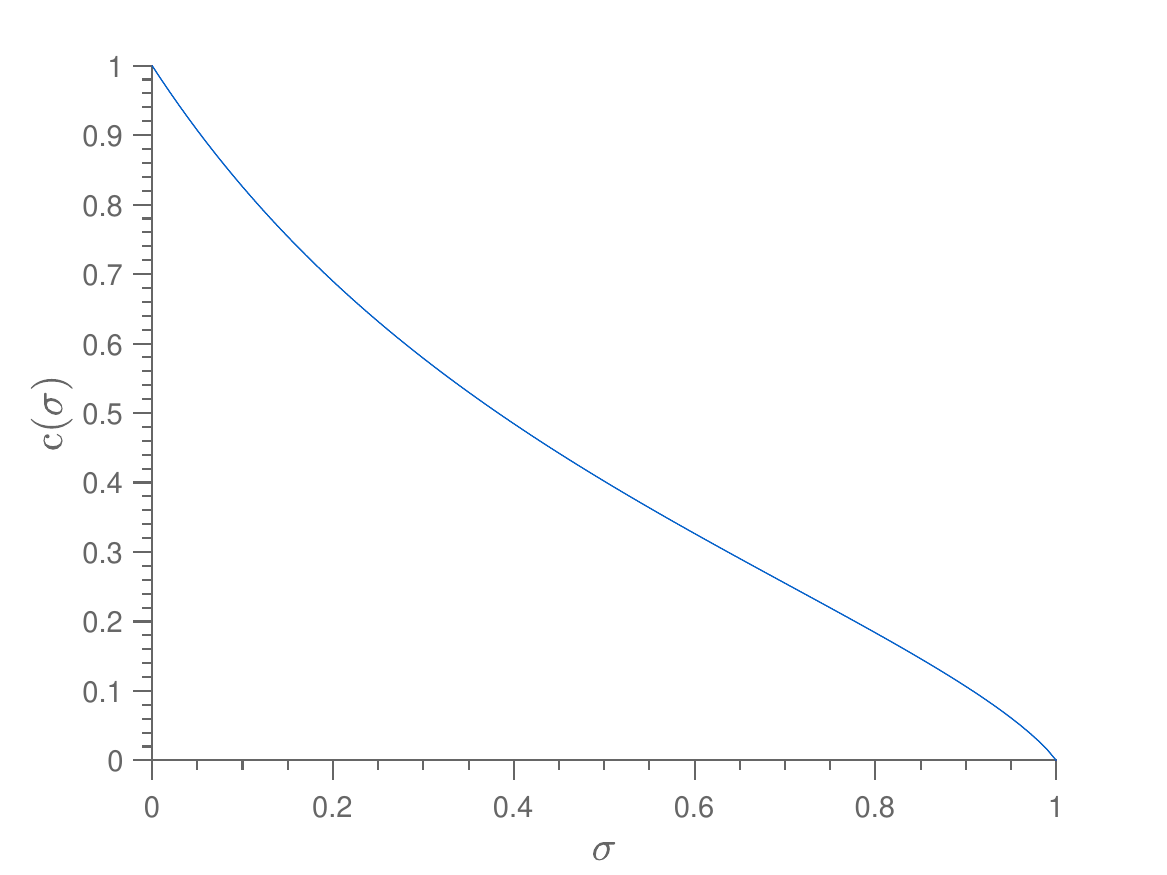}
    \caption{The constant $c(\sigma)$ (re-scaled multiplying by $384$) appearing in Theorem \ref{prop:one_it_conv}.}
    \label{fig:c_2_sig}
\end{figure}
\end{remark}

\paragraph{Discussion.} Given an initial seed $X^{(0)}\in \calP_n$, the case $N=1$ in Algorithm \ref{alg:ppmgm} can be alternatively interpreted as the following two step process: first, compute a similarity matrix $AX^{(0)}B$ and then round the similarity matrix to an actual permutation matrix. This strategy has been frequently applied in graph matching algorithms in both the seeded and seedless case \citep{Spectral_weighted_Ume,Grampa,LubSri,YuXuLin}. In terms of the  quality of the seed, Theorem \ref{prop:one_it_conv} gives the same guarantees obtained by \citep[Thm.1]{YuXuLin} which requires $\Omega(\sqrt{n\log n})$ vertices in the seed to be correctly matched. However the results of \citep{YuXuLin} are specifically for the correlated Erdös-Renyi model.

\subsection{Partial recovery in one iteration} \label{subsec:partial_one_step}
%
%
In the partial recovery setting, we are interested in the fraction of nodes that are correctly matched. To this end let us define the following measure of performance
\begin{equation}\label{eq:overlap_def}
    \operatorname{overlap}(\nu,\nu'):=\frac{1}{n}|\{i\in[n]:\nu(i)=\nu'(i)\}|
\end{equation}
for any pair $\nu,\nu'\in\calS_n$.
Recall that we assume that the ground truth permutation is $x^*=\operatorname{id}$ and $\pi$ is the output of Algorithm \ref{alg:ppmgm} with input $(A,B,X^{(0)}=X,N=1)$ where $\Pi=\operatorname{GMWM} (AXB)$. Observe that $\operatorname{overlap}(\pi,x^*=\operatorname{id})=s_\pi$ is the fraction of fixed points of the permutation $\pi$. It will be useful to consider the following definition. We say that $C_{ij}$ is \emph{row-column dominant} if $C_{ij}> C_{i'j}$ for all $i'\neq i$ and $C_{ij}>C_{ij'}$, for all $j'\neq j$. The following lemma  relates the overlap of the output of $\texttt{GMWM}$ with the property that a subset of the entries of $C$ is row-column dominant, its proof is outlined in Appendix \ref{app:proofs_lem_diagdom}.  
\begin{lemma}\label{lem:overlap_event}
Let $C$ be a $n\times n$ matrix with the property that there exists a set $\{i_1,\cdots,i_r\}$, with $1\leq r \leq n$ such that $C_{i_k,i_k}$ is row-column dominant for $k\in[r]$. Let $\pi\in\calS_n$ be permutation corresponding to $\operatorname{GMWM}(C)\in\calP_n$. Then it holds that $\pi(i_k)=i_k$ for $k\in[r]$ and, in consequence, the following event inclusion holds
\begin{equation}\label{eq:overlap_event}
    \{\operatorname{overlap}(\pi,\operatorname{id})< r/n\}\subset\bigcap_{\substack{I_r\subset [n]\\|I_r|=r}}\bigcup_{i\in I_r}\{C_{ii} \textup{ is not row-column dominant } \}.
\end{equation}
\end{lemma}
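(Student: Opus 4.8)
The plan is to first prove the deterministic statement that row–column dominance of a diagonal entry forces \texttt{GMWM} to fix the corresponding index, and then to obtain the event inclusion \eqref{eq:overlap_event} by an immediate contrapositive argument.

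For the deterministic claim I would unwind the execution of \texttt{GMWM}, run to completion (i.e.\ with $N=n$, so that the output is a genuine permutation, which is the setting in which $\tau$ is defined). It produces a sequence of pairs $(i_1,j_1),\dots,(i_n,j_n)$; since selecting $(i_t,j_t)$ sets the entire $i_t$-th row and $j_t$-th column to $-\infty$, the indices $i_1,\dots,i_n$ are a permutation of $[n]$, as are $j_1,\dots,j_n$, and $\pi$ is the map $i_t\mapsto j_t$. Now fix an index $m$ with $C_{mm}$ row–column dominant, and let $t$ be the unique step with $i_t=m$ and $s$ the unique step with $j_s=m$; the claim is that $t=s$, which yields $\pi(m)=m$. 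Suppose instead $t\neq s$, say $t<s$ (the case $s<t$ is symmetric, arguing on column $m$ instead of row $m$). Before step $t$ neither row $m$ nor column $m$ has been selected — row $m$ is selected at step $t$, column $m$ at step $s>t$ — hence at the start of step $t$ every entry of row $m$, in particular the $(m,m)$ entry, still equals its original value. But the entry selected at step $t$ is $C_{m,j_t}$ with $j_t\neq m$, whereas $C_{mm}>C_{m,j_t}$ by row–column dominance, contradicting maximality of the selected entry. Hence $t=s$. The only point requiring care is exactly this bookkeeping — confirming that the relevant entries of row $m$ (and the diagonal entry) still carry their original values at step $t$ — and it holds precisely because \texttt{GMWM} zeroes out a row or column only when it selects an entry from it, and each row index and each column index is selected exactly once. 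Applying this argument to each of $i_1,\dots,i_r$ gives $\pi(i_k)=i_k$ for all $k\in[r]$.

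For the event inclusion, fix an arbitrary $I_r\subseteq[n]$ with $|I_r|=r$. If every diagonal entry $C_{ii}$, $i\in I_r$, were row–column dominant, the previous paragraph would give $\pi(i)=i$ for all $i\in I_r$, and therefore $\operatorname{overlap}(\pi,\operatorname{id})=s_\pi\geq r/n$. Contrapositively, on the event $\{\operatorname{overlap}(\pi,\operatorname{id})<r/n\}$ there must exist some $i\in I_r$ with $C_{ii}$ not row–column dominant, i.e.\ this event is contained in $\bigcup_{i\in I_r}\{C_{ii}\text{ is not row-column dominant}\}$. Since $I_r$ was an arbitrary subset of size $r$, intersecting over all of them yields \eqref{eq:overlap_event}. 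No genuinely hard step arises; the whole content is the greedy-execution bookkeeping in the middle paragraph.
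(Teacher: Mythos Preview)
Your proof is correct and follows essentially the same approach as the paper: a contradiction argument showing that \texttt{GMWM} must match each row-column dominant diagonal index to itself, followed by the contrapositive to get the event inclusion. Your bookkeeping is in fact more explicit than the paper's --- you carefully verify that the $(m,m)$ entry retains its original value at the relevant step --- but the underlying idea is identical.
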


Equipped with this lemma, we can prove the following generalization of Theorem \ref{prop:one_it_conv}, its proof is detailed in Section \ref{subsec:proof_thm_partial_rec}.
\begin{theorem}\label{prop:partial_rec}
Let $A,B\sim W(n,\sigma,\operatorname{id})$ and $X\in \calP_n$ with $\|X-\operatorname{Id}\|_F\leq \theta\sqrt{n}$, where $0\leq \theta \leq\sqrt{2(1-\frac{10}n)}$ and $n\geq 10$. Let $\pi\in \calS_n$ be the output of Algorithm \ref{alg:ppmgm} with input $(A,B,X^{(0)}=X,N=1)$. Then, for $r\in[n]$
\begin{equation*}
  \prob( \operatorname{overlap}(\pi,\operatorname{id})> r/n)\geq 1-16rne^{-c(\sigma)\big(1-\frac{\theta^2}2\big)^2n}.
\end{equation*}
In particular, if $x\in\calS_n$ is the map corresponding to $X$ and  $|S_X|\geq \sqrt{\frac1{c(\sigma)}n\log{(16rn^2)}}$, then   
\begin{equation*}
  \prob( \operatorname{overlap}(\pi,\operatorname{id})> r/n)\geq 1-n^{-1}.
\end{equation*}
\end{theorem}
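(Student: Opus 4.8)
The plan is to derive Theorem~\ref{prop:partial_rec} from Lemma~\ref{lem:overlap_event} combined with the per-entry tail estimates already used to prove Theorem~\ref{prop:one_it_conv}. The crucial remark is that the right-hand side of the inclusion \eqref{eq:overlap_event} is an intersection over \emph{every} $r$-subset $I_r\subset[n]$, and an intersection is contained in each of its members. Fixing once and for all $I_r^\ast=\{1,\dots,r\}$ (no exchangeability of the law of $C=AXB$ is needed, since any fixed $r$-subset works), we get
\[
\{\operatorname{overlap}(\pi,\id)<r/n\}\ \subseteq\ \bigcup_{i=1}^{r}\{C_{ii}\text{ is not row-column dominant}\},
\]
and hence, by a union bound, $\prob(\operatorname{overlap}(\pi,\id)<r/n)\le\sum_{i=1}^{r}\prob(C_{ii}\text{ not row-column dominant})$.

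Next I would peel off one more union bound. By definition, $C_{ii}$ fails to be row-column dominant precisely when $C_{ij}\ge C_{ii}$ for some $j\ne i$ or $C_{ji}\ge C_{ii}$ for some $j\ne i$, so
\[
\prob(C_{ii}\text{ not row-column dominant})\ \le\ \sum_{j\ne i}\prob(C_{ij}\ge C_{ii})\ +\ \sum_{j\ne i}\prob(C_{ji}\ge C_{ii}).
\]
Each of these $2(n-1)$ probabilities is controlled by the concentration bound behind Theorem~\ref{prop:one_it_conv}: writing $C_{ii}-C_{ij}=\sum_k A_{ik}\big(B_{x(k),i}-B_{x(k),j}\big)$ and $C_{ii}-C_{ji}=\sum_k\big(A_{ik}-A_{jk}\big)B_{x(k),i}$ as order-two Gaussian chaoses with mean of order $(1-\tfrac{\theta^2}{2})$ and variance of order $1/n$, one obtains $\prob(C_{ij}\ge C_{ii}),\ \prob(C_{ji}\ge C_{ii})\le C_1\,e^{-c(\sigma)(1-\theta^2/2)^2 n}$ for an absolute constant $C_1$, uniformly over $i\ne j$ and over $X$ with $\norm{X-\id}_F\le\theta\sqrt n$. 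The row comparison is exactly the estimate that drives Theorem~\ref{prop:one_it_conv}; the column comparison follows from the same argument after exchanging the roles of $A$ and $B$, which is legitimate because $(A,B)\stackrel{d}{=}(B,A)$ under the CGW model. Choosing $C_1$ so that $2(n-1)C_1\le 16n$ gives $\prob(C_{ii}\text{ not row-column dominant})\le 16n\,e^{-c(\sigma)(1-\theta^2/2)^2 n}$, and summing over $i\in I_r^\ast$ yields $\prob(\operatorname{overlap}(\pi,\id)<r/n)\le 16rn\,e^{-c(\sigma)(1-\theta^2/2)^2 n}$; taking complements (and replacing $r$ by $r+1$ if one insists on the strict event $\operatorname{overlap}>r/n$) gives the first display of the theorem.

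For the ``in particular'' clause I would just instantiate the bound at the sharpest admissible $\theta$. Given $X$ with $|S_X|\ge\sqrt{c(\sigma)^{-1}n\log(16rn^2)}$, we have $\norm{X-\id}_F^2=2(n-|S_X|)$, so we may take $\theta^2=2(1-|S_X|/n)$, i.e. $1-\theta^2/2=|S_X|/n$; one checks $\theta\le\sqrt{2(1-10/n)}$ since the hypothesised lower bound forces $|S_X|\ge 10$ when $n\ge 10$. Then $c(\sigma)(1-\theta^2/2)^2 n=c(\sigma)|S_X|^2/n\ge\log(16rn^2)$, whence $16rn\,e^{-c(\sigma)(1-\theta^2/2)^2 n}\le 16rn/(16rn^2)=1/n$.

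The only step that is not a routine union bound is the tail estimate for the \emph{column} comparisons $\prob(C_{ji}\ge C_{ii})$: Theorem~\ref{prop:one_it_conv} as stated only requires row-wise diagonal dominance, so I would need to check that the Gaussian-chaos/Bernstein concentration argument used there transfers verbatim (or via the distributional symmetry $(A,B)\stackrel{d}{=}(B,A)$) to the column comparisons, and to track the constants carefully enough that the combined per-vertex failure probability fits under the stated factor $16n$. Granting that, the remainder of the proof is the two-level union bound above together with the reduction to a single fixed $r$-subset furnished by Lemma~\ref{lem:overlap_event}.
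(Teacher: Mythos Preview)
Your proof is correct and follows essentially the same route as the paper: reduce the intersection in Lemma~\ref{lem:overlap_event} to the fixed subset $\{1,\dots,r\}$, union-bound over $i$, and then bound $\prob(C_{ii}\text{ not row-column dominant})$ by the per-entry tail estimates from the proof of Theorem~\ref{prop:one_it_conv}. The paper packages the per-$i$ bound as a separate Lemma~\ref{lem:not_rc_dom} and handles the column comparison via the identities $C'_{ji,x}=C'_{ij,x^{-1}}$ and $C''_{ji,x}\stackrel{d}{=}C''_{ij,x^{-1}}$ (using $s_x=s_{x^{-1}}$), which is equivalent to your $(A,B)\stackrel{d}{=}(B,A)$ symmetry once you observe that the swap effectively replaces $X$ by $X^{-1}$.
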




\subsection{Exact recovery after multiple iterations, uniformly in the seed}
The results in Sections \ref{sec:mainstep1} and \ref{subsec:partial_one_step} hold for any given seed $X^{(0)}$, and it is crucial that the seed does not depend on the graphs $A, B$. 
In this section, we provide uniform convergence guarantees for \ppm \ which hold uniformly over all choices of the seed in a neighborhood around $x^*$. %
\begin{theorem} \label{thm:unif_rec_ppm}
Let $\sigma \in [0,1)$, $A,B\sim W(n,\sigma,\operatorname{id})$ \revb{and define $\kappa:=\left(\frac{9}{410}\right)^2(1-\sigma^2)$. For any $X^{(0)}\in \calP_n$, denote $X^{(N)} $ the output of \ppm\ with input ($\calH(A),\calH(B), X^{(0)}, N=2\log n$), where $\calH(M)$ corresponds to the matrix $M$ with the diagonal removed. Then, there exists constants $C',c>0$ such that, for all $n\geq \frac{C'}{\kappa}\log{n}$, it holds \[ \prob\left(\forall X^{(0)}\in \calP_n \text{ such that }|S_{X^{(0)}}|\geq (1-\kappa)n, X^{(N)} = \id\right) \geq \revb{1-e^{-c\kappa n}-3n^{-2}}.\] }%
%
\end{theorem}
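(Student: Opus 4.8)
The plan is to run a deterministic contraction argument on the "bad set'' of each iterate, conditioned on a single high-probability event about the random matrices $\calH(A),\calH(B)$. Write $x^{(k)}$ for the permutation at step $k$, $B_k := [n]\setminus S_{X^{(k)}}$ for its set of non-fixed points, and $b_k := |B_k|$; the hypothesis gives $b_0 \le \kappa n$. The key structural fact (from the connection of $\tau$ with GMWM and Lemma \ref{lem:diagdom_LAP}) is that whether $i$ is a fixed point of $x^{(k+1)}$ is controlled by whether the entry $C^{(k)}_{ii}$ of $C^{(k)} = \calH(A)X^{(k)}\calH(B)$ dominates the other entries in its row and column; and this domination can only fail because of "noise'' contributions routed through the currently mislabeled coordinates in $B_k$. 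So the first step is to decompose $C^{(k)} = \calH(A)\,\id\,\calH(B) + \calH(A)(X^{(k)}-\id)\calH(B)$. The first term is essentially $\calH(A)\calH(B)$ whose diagonal concentrates around a positive constant (of order $1-\sigma^2$, coming from the correlation $\sqrt{1-\sigma^2}$ between $A$ and $B$ after removing the $\frac1n$-scaled diagonal) while its off-diagonal entries are $O(\sqrt{\log n / n})$; the second term is the "perturbation'' supported on the columns/rows indexed by $B_k$, and its entries are controlled by $\max$ over submatrices of $\calH(A),\calH(B)$ of size $n\times b_k$.

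The second step is to set up the event $\calE$ on which: (a) all diagonal entries of $\calH(A)\calH(B)$ exceed some $\alpha(1-\sigma^2)$; (b) all off-diagonal entries of $\calH(A)\calH(B)$ are at most $\beta\sqrt{\log n/n}$; and (c) uniformly over all subsets $T\subseteq[n]$ and all sign/selection patterns, the operator-type quantities $\|\calH(A)_{:,T}\|$, $\|\calH(B)_{T,:}\|$ and the relevant row-wise $\ell_\infty$ norms of $\calH(A)_{:,T}\calH(B)_{T,:}$ are bounded by something like $C\sqrt{|T|/n} + C\sqrt{\log n/n}$. Each of these is a standard Gaussian concentration / $\varepsilon$-net statement (Gaussian concentration of Lipschitz functions, Bernstein for quadratic forms, union bound over $\binom{n}{|T|} \le 2^n$ subsets absorbed by the $e^{-c\kappa n}$ slack, plus $n^{-2}$-type bounds for the $O(n^2)$ diagonal/off-diagonal entries). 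On $\calE$, for a coordinate $i$ to be "broken'' at step $k+1$ it must be that a perturbation term of size $\lesssim \sqrt{b_k/n}$ overcomes a gap of size $\gtrsim (1-\sigma^2)$; summing the squared perturbations over all broken coordinates gives a bound of the form $b_{k+1} \cdot (1-\sigma^2)^2 \lesssim b_k$, i.e. $b_{k+1} \le \frac{b_k}{2}$ (say), as long as $b_k \le \kappa n$ with $\kappa = (9/410)^2(1-\sigma^2)$ chosen exactly so the constants close and $b_{k+1}\le \kappa n$ is preserved. This is where the explicit constant $\kappa$ and the condition $n \ge (C'/\kappa)\log n$ (needed so the $\sqrt{\log n / n}$ terms are dominated) enter.

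The third step is the iteration: starting from $b_0 \le \kappa n$, the contraction $b_{k+1}\le b_k/2$ holds at every step (the invariant $b_k \le \kappa n$ is maintained since the sequence is decreasing), so after $N = 2\log n$ steps $b_N \le \kappa n \cdot 2^{-2\log n} < 1$, forcing $b_N = 0$, i.e. $x^{(N)} = \id$. Crucially $\calE$ is one fixed event that does not depend on $X^{(0)}$, so the conclusion holds simultaneously for all admissible seeds $X^{(0)}$ — adversarial or data-dependent seeds are covered because the only randomness is in $A,B$ and we have already conditioned on their good behaviour. The main obstacle I expect is step two's uniform control: getting a clean per-step inequality $b_{k+1}\le b_k/2$ requires bounding, uniformly over the unknown (and $A,B$-dependent) set $B_k$, a mixed quantity that is neither a pure operator norm nor a pure entrywise max — one needs the right "$\ell_\infty \to$ row-wise" bound on $\calH(A)_{:,B_k}\calH(B)_{B_k,:}$ and a careful accounting that a single broken column $j\in B_{k+1}$ can only be "charged'' once, so that the naive union over the $\binom{n}{b_k}$ choices of $B_k$ does not blow up the probability. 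Matching the combinatorial entropy $2^{b_k \log(en/b_k)}$ of these subsets against the Gaussian tail $e^{-c b_{k}}$-type bounds, and verifying the chain of inequalities reproduces precisely the stated $\kappa$, will be the delicate part; everything else is bookkeeping with sub-Gaussian and sub-exponential tails.
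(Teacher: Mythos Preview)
Your high-level plan matches the paper's: condition on a single good event for $(A,B)$, establish a per-step contraction $b_{k+1}\le b_k/2$ on the non-fixed set, and iterate for $O(\log n)$ steps. The decomposition of $C^{(k)}$ into $\calH(A)\calH(B)$ plus a perturbation routed through $B_k$ is also how the paper proceeds, and your events (a) and (b) correspond precisely to the paper's lemmas on $\langle A_{i:},B_{i:}\rangle$ and $\langle A_{i:},B_{i':}\rangle$.

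The gap is in the mechanism you propose for the contraction. Your ``summing the squared perturbations over all broken coordinates gives $b_{k+1}(1-\sigma^2)^2\lesssim b_k$'' is a Markov-type energy argument, and it does \emph{not} yield a contraction. Concretely: if $i$ is broken then, via Cauchy--Schwarz and the crude bound $\|B_{i':}\|\le 2$, one gets $\|\calH(A)_{i:}\|_{B_k}\gtrsim\sqrt{1-\sigma^2}$; summing yields $b_{k+1}\cdot c(1-\sigma^2)\le\|\calH(A)_{:,B_k}\|_F^2\approx b_k$, i.e.\ $b_{k+1}\lesssim b_k/(1-\sigma^2)$, which is never a contraction for $\sigma>0$, regardless of how $\kappa$ is chosen. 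Equivalently, Markov on the Frobenius energy only says that the number of rows with $\|\calH(A)_{i:}\|_{B_k}^2\ge M$ is at most $b_k/M$, and with $M$ a constant multiple of $\kappa$ this is order $n$, not $b_k/4$. What the paper does instead is a \emph{threshold} argument backed by exponential (not second-moment) tails: set $\tilde I=\{i:\|A_{i:}\|_{I^c}^2<8\kappa\}$ and prove $|\tilde I^c|\le|I^c|/4$ uniformly over $I$, using that each row satisfies $\prob(\|A_{i:}\|_{W}^2\ge 4\kappa)\le e^{-\kappa n}$ and multiplying these tails over rows to beat the $\binom{n}{|W|}\binom{n}{|Q|}$ union bound. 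Since $A$ is symmetric the rows are not independent, and the key missing ingredient in your plan is a \emph{decoupling lemma} (adapted from Mao--Rudelson): from any $Q,W$ with $\|A_{i:}\|_W^2\ge M$ for all $i\in Q$, one can extract \emph{disjoint} $Q'\subseteq Q$, $W'\subseteq W$ with $|Q'|\ge|Q|/5$ and $\|A_{i:}\|_{W'}^2\ge M/2$ for all $i\in Q'$; on disjoint sets the vectors $(A_{i:}|_{W'})_{i\in Q'}$ are genuinely independent and the entropy-versus-tail calculation you allude to in your final paragraph then actually closes. You correctly flag that calculation as the crux, but the energy step you wrote down earlier cannot substitute for it, and the decoupling step that enables it is absent from the proposal.
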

The diagonal of the adjacency matrices $A$ and $B$ in Algorithm \ref{alg:ppmgm} was removed in the above theorem only for ease of analysis. Its proof is detailed in Section \ref{subsec:proof_unif_seed_ppm}.
\revb{A direct consequence of Theorem \ref{thm:unif_rec_ppm} is when the seed $X^{(0)}$ is data dependent, i.e., depends on $A,B$. In this case, denoting $\mathcal{E}_0 = \{|S_{X^{(0)}}|\geq (1-\kappa)n\}$ to be the event that $X^{(0)}$ satisfies the requirement of Theorem \ref{thm:unif_rec_ppm}, and $\mathcal{E}_{unif}$ to be the ``uniform'' event in Theorem \ref{thm:unif_rec_ppm}, we clearly have by a union bound that 
\begin{equation*}
    \prob(X^{(N)} = \id) \geq \prob(\mathcal{E}_0) - \prob(\mathcal{E}^c_{unif}) \geq \prob(\mathcal{E}_0) - e^{-c\kappa n}-3n^{-2}. 
\end{equation*}
Hence if $\mathcal{E}_0$ holds with high probability, then exact recovery of $X^* = \id$ is guaranteed with high probability as well.
}
\begin{remark}
Contrary to our previous theorems, here the strong consistency of the estimator holds uniformly over all possible seeds that satisfy the condition $|S_{X^{(0)}}|\geq (1-\kappa)n$. For this reason, we need a stronger condition than $|S_{X^{(0)}}|=\Omega(\sqrt{n\log n})$ as was the case in Theorem \ref{prop:one_it_conv}. Our result is non trivial and cannot be obtained from Theorem  \ref{prop:one_it_conv} by taking a union bound. The proof relies on a decoupling technique adapted from \citep{MaoRud} that used a similar refinement method for CER graphs.
\end{remark}

\begin{remark}
Contrary to the results obtained in the seedless case that require $\sigma=o(1)$ for exact recovery \citep{Grampa}, we can allow $\sigma$ to be of constant order.
The condition the fraction of fixed points in the seed be at least \revb{$1-\kappa = 1-\left(\frac{9}{410}\right)^2(1-\sigma^2)$ } seems to be far from optimal as shown in the experiments in Section \ref{sec:experiments}, \revb{see Fig. \ref{fig:perf-1}}.
But interestingly, this condition shows that when the noise $\sigma$ increases, \ppm\, needs a more accurate initialization to recover the latent permutation. This is confirmed by our experiments.
\end{remark}

\section{Proof outline}
\subsection{Proof of Theorem \ref{prop:one_it_conv}}\label{sec:thm_one_it}
For $A,B\sim W(n,\sigma,\operatorname{id})$, the proof of Theorem \ref{prop:one_it_conv} relies heavily on the concentration properties of the entries of the matrix $C=AXB$, which is the matrix that is projected by our proposed algorithm. In particular, we use the fact that $C$ is diagonally dominant with high probability, under the assumptions of Theorem \ref{prop:one_it_conv}, which is given by the following result. Its proof is delayed to Appendix 
\ref{app:concentration}.
\begin{proposition}
[Diagonal dominance property for the matrix $C=AXB$]\label{prop:diago_dom}
Let $A,B\sim W(n,\sigma,\operatorname{id})$ with correlation parameter $\sigma\in[0,1)$ and let $X\in \calP_n$ with $S_X$ the set of its fixed points and $s_x:=|S_X|/n$. Assume that $s_x\geq 10/n$ and that $n\geq 10$. Then the following is true.
\begin{enumerate}[(i)]
\item \textbf{Noiseless case.} For a fixed $i\in[n]$ it holds that
\begin{equation*}
   \prob\big(\exists j\neq i: (AXA)_{ij}>(AXA)_{ii}\big)\leq 4ne^{-\frac{s_x^2}{96}n}.  
\end{equation*}

\item For $C=AXB$ and $i\in [n]$ it holds \begin{equation*}
    \prob{(\exists j\neq i : C_{ij}>C_{ii})}\leq 5ne^{-c(\sigma)s_x^2n}
\end{equation*}
where $c(\sigma)=\frac1{384}(\frac{1-\sigma^2}{1+2\sigma\sqrt{1-\sigma^2}})$. \end{enumerate}
\end{proposition}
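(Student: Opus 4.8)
The goal is to show that for $C = AXB$ with $A,B \sim W(n,\sigma,\mathrm{id})$, each row of $C$ has its diagonal entry strictly dominant with high probability. The plan is to fix $i \in [n]$ and, for $j \neq i$, write down the difference $D_{ij} := C_{ii} - C_{ij}$ explicitly in terms of the Gaussian entries of $A$ and $Z$ (using $B_{k\ell} = \sqrt{1-\sigma^2} A_{k\ell} + \sigma Z_{k\ell}$), then show $\prob(D_{ij} \le 0)$ is exponentially small in $s_x^2 n$, and finish with a union bound over the $n-1$ choices of $j$.

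\textbf{Step 1: The noiseless case.} First I would treat $(AXA)$, which is the $\sigma = 0$ case and isolates the main combinatorial mechanism. Write $(AXA)_{ij} = \sum_{k} A_{ik} A_{x(k) j}$ where $x$ is the permutation map of $X$. The diagonal term is $(AXA)_{ii} = \sum_k A_{ik} A_{x(k) i}$. The key observation is that for $k \in S_X$ (a fixed point, $x(k) = k$), the summand in $(AXA)_{ii}$ is $A_{ik}^2$, which is nonnegative and has positive mean $\approx 1/n$; over the $|S_X| = s_x n$ fixed points these contribute a term concentrating around $s_x$. The off-diagonal entry $(AXA)_{ij}$ and the non-fixed-point part of $(AXA)_{ii}$ are, conditionally on appropriate sub-$\sigma$-algebras, mean-zero quadratic-form-like expressions in the Gaussians. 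So $D_{ij} = (AXA)_{ii} - (AXA)_{ij}$ splits as (a positive drift of order $s_x$) minus (fluctuations). The plan is to bound the fluctuations: these are bilinear/quadratic forms in independent Gaussians, so I would condition on one of $A$'s "copies" to linearize, apply a Gaussian concentration / Hanson–Wright-type tail bound (or Bernstein after conditioning, since each conditional term is a product of a fixed Gaussian and an independent one), and show the deviation exceeds $s_x/2$ with probability at most $\exp(-c s_x^2 n)$. Careful bookkeeping of which entries of $A$ are shared between $(AXA)_{ii}$ and $(AXA)_{ij}$ (e.g. the row $A_{i:}$ appears in both) is needed so that the right quantities are independent after conditioning; I would condition on $A_{i:}$ first, making both $(AXA)_{ii}$ and $(AXA)_{ij}$ linear in the remaining independent entries, apply a Gaussian tail bound for linear forms with the conditional variances controlled by $\|A_{i:}\|^2 \approx 1$, and then handle the event $\|A_{i:}\|^2 \approx 1$ separately with a $\chi^2$ tail. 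The factor $1/96$ and the $4n$ prefactor come from combining these few tail events with explicit constants.

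\textbf{Step 2: The noisy case.} Expand $C = AXB = \sqrt{1-\sigma^2}\, AXA + \sigma\, AXZ$. The first term is exactly what Step 1 controls; it supplies the positive drift $\sqrt{1-\sigma^2}\, s_x$ (up to fluctuations). The second term $\sigma AXZ$ is a pure "noise" term — a bilinear form in the independent Gaussians of $A$ and $Z$ — with zero mean for every entry (including the diagonal, since $A$ and $Z$ are independent), and I would bound $|(AXZ)_{ii} - (AXZ)_{ij}|$ by the same conditioning-then-Gaussian-tail strategy, getting a deviation bound of order $\exp(-c' t^2 n / \sigma^2)$ for deviation $t$. Balancing the drift $\sqrt{1-\sigma^2}\, s_x$ against the two fluctuation sources, splitting the allowed deviation appropriately between the $AXA$ part (variance scale $1-\sigma^2$) and the $AXZ$ part (variance scale $\sigma^2$ times the cross term $2\sigma\sqrt{1-\sigma^2}$ from the $A$-entries being shared), yields the stated constant $c(\sigma) = \frac{1}{384}\frac{1-\sigma^2}{1+2\sigma\sqrt{1-\sigma^2}}$: the numerator is the squared drift, the denominator collects the total effective variance $1 + 2\sigma\sqrt{1-\sigma^2}$ (roughly $\|\sqrt{1-\sigma^2}A + \sigma Z\|$-type scaling along the shared row), and $1/384$ absorbs the concrete constants from the $\chi^2$ and Gaussian tail inequalities and the split. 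A final union bound over $j \ne i$ produces the $5n$ prefactor.

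\textbf{Main obstacle.} The principal difficulty is the dependence structure: $C = AXB$ is a bilinear form and the entries $C_{ii}$ and $C_{ij}$ share the entire row $A_{i:}$ and also (through $X$) overlapping entries of $B$, so the difference $D_{ij}$ is not a sum of independent terms and not even a clean quadratic form. The right move — and the delicate part of the argument — is choosing the conditioning order (condition on $A_{i:}$, then on $Z_{i:}$ or the relevant part of $B$) so that $D_{ij}$ becomes, conditionally, a Gaussian linear form whose variance is an explicit function of the conditioned rows' norms, then controlling those norms with $\chi^2$-tails, all while tracking constants tightly enough to land the clean closed form for $c(\sigma)$. Getting the positive drift to come out as exactly $\sqrt{1-\sigma^2}\, s_x$ (rather than being swamped by diagonal variance terms when $s_x$ is small, which is why the hypothesis $s_x \ge 10/n$ appears) is the other point requiring care.
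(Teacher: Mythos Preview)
Your high-level structure matches the paper: separate the diagonal term $C_{ii}$ from the off-diagonals $C_{ij}$, show each is on the right side of $s_x/2$ (resp.\ $\sqrt{1-\sigma^2}\,s_x/2$) with exponentially small failure probability, then union-bound over $j$; for the noisy case, decompose $C=\sqrt{1-\sigma^2}\,AXA+\sigma\,AXZ$ and balance the two fluctuation budgets via a tunable $\delta$, whose optimizer produces the constant $c(\sigma)$. Where you diverge is in the concentration step. The paper never conditions. For the diagonal term it writes $C_{ii}=\langle \tilde a_i, X\tilde a_i\rangle+\calZ_i$ and uses the eigenvalues of $\tfrac12(X+X^T)$ to obtain the exact distribution $\sum_\ell \lambda_\ell g_\ell^2$ (a weighted chi-squared), then applies the Laurent--Massart tail with $\|\lambda\|_1=s_x n$, $\|\lambda\|_2\le\sqrt{2n}$, $\|\lambda\|_\infty\le 1$. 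For the off-diagonals it uses the polarization identity $\langle\tilde a_i,X\tilde a_j\rangle=\|\tfrac12(\tilde a_i+X\tilde a_j)\|^2-\|\tfrac12(\tilde a_i-X\tilde a_j)\|^2$ to again reduce to a weighted chi-squared difference and apply Laurent--Massart; the same polarization handles $AXZ$ since $A_{:i}$ and $XZ_{:j}$ are independent. This route gives the explicit constants $1/96$ and $1/384$ directly.

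Your conditioning route can be made to work, but there is a slip in the plan: after conditioning on $A_{i:}$, the term $(AXA)_{ii}=A_{i:}XA_{i:}^\top$ is \emph{deterministic}, not linear in any remaining randomness, so it still needs its own quadratic-form tail bound (Hanson--Wright, or the same eigenvalue-of-$\tfrac12(X+X^T)$ trick the paper uses). Conditioning buys you something only for $C_{ij}$, which indeed becomes a Gaussian linear form with conditional variance $\approx \|A_{i:}\|^2/n$; you then pay for an extra $\chi^2$ tail on $\|A_{i:}\|^2$. This is viable but strictly more bookkeeping than the paper's polarization argument, and the precise constant $c(\sigma)=\tfrac{1}{384}\tfrac{1-\sigma^2}{1+2\sigma\sqrt{1-\sigma^2}}$ arises in the paper not from any variance expansion of $\sqrt{1-\sigma^2}A+\sigma Z$, but from the specific choice $\delta=\tfrac{\sigma\sqrt{1-\sigma^2}}{\sigma+\sqrt{1-\sigma^2}}\cdot\tfrac{s_x}{2}$ that equalizes the two exponents in the split $\{\sqrt{1-\sigma^2}\,C'_{ii}\le s_{\sigma,x}+\delta\}\cup\{\sigma\,C''_{ii}\le -\delta\}$.
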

With this we can proceed with the proof of Theorem \ref{prop:one_it_conv}.
\begin{proof}[Proof of Theorem \ref{prop:one_it_conv}]
To prove  part $(i)$ of the theorem it suffices to notice that in Proposition \ref{prop:diago_dom} part $(ii)$ we upper bound the probability that $C=AXB$ is not diagonally dominant for each fixed row. Using the union bound, summing over the $n$ rows, we obtain the desired upper bound on the probability that $C$
is not diagonally dominant.  We now prove part $(ii)$. Notice that
the assumption $\|X-\id\|_F\leq \theta\sqrt{n}$ for $\theta< \sqrt{2}$ implies that $s_x$ is strictly positive. Moreover, from this assumption and the fact that $\|X-\id\|^2_F=2(n-|S_X|)$ we deduce that \begin{equation}\label{eq:theta_fp}
s_x\geq \Big(1-\frac{{\theta}^2}2\Big).
\end{equation}
On the other hand, we have
\begin{align*}
    \prob(\Pi\neq \operatorname{Id})&\leq \prob(C \text{ is not diag.dom})\\
    &= \prob(\exists i,j\in[n],i\neq j:C_{ii}<C_{ij})\\
    &\leq 5n^2e^{-c(\sigma)s_x^2n}\\
    &\leq 5n^2e^{-c(\sigma)\big(1-\frac{\theta^2}2\big)^2n}
\end{align*}
where we used Lemma \ref{lem:diagdom_LAP} in the first inequality, Proposition \ref{prop:diago_dom} in the penultimate step and, \eqref{eq:theta_fp} in the last inequality. 
\end{proof}

\subsubsection{Proof of Proposition \ref{prop:diago_dom}}
In Proposition \ref{prop:diago_dom} part $(i)$ we assume that $\sigma=0$. The following are the main steps of the proof.
\begin{enumerate}
\item We first prove that for all $X\in\calP_n$ such that $s_x=|S_X|/n$ and for $i\neq j\in[n]$ the gap $C_{ii}-C_{ij}$ is of order $s_x$ in expectation. 
\item We prove that $C_{ii}$ and $C_{ij}$ are sufficiently concentrated around its mean. In particular, the probability that $C_{ii}$ is smaller than $s_x/2$ is exponentially small. The same is true for the probability that $C_{ij}$ is larger than $s_x/2$.
\item We use the fact $\prob(C_{ii}\leq C_{ij})<\prob(C_{ii} \leq s_x/2)+\prob(C_{ij}\geq s_x/2)$  to control the probability that $C$ is not diagonally dominant. 
\end{enumerate}

The proof is mainly based upon the following two lemmas.
\begin{lemma}\label{lem:expectation}
For the matrix $C=AXA$ and with $s_x=|S_X|/n$ we have 
\[\expec[C_{ij}]=\begin{cases}
s_x+\frac1n\mathbbm{1}_{i\in S_X}  \enskip\text { for }i=j, \\
\frac1n\mathbbm{1}_{x(j)=i} \enskip\text { for }i\neq j, \\
\end{cases}\]
and from this we deduce that for $i,j\in[n]$ with $i\neq j$ \[s_x-\frac1n\leq \expec{[C_{ii}]}-\expec{[C_{ij}]}\leq s_x+\frac1n.\]
\end{lemma}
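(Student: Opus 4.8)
The plan is to compute $\expec[C_{ij}]$ directly from the definition, exploiting that $X$ is a permutation matrix and that the entries of a GOE matrix are independent up to the symmetry constraint. Writing $x\in\calS_n$ for the permutation associated to $X$, so that $X_{kl}=\mathbbm{1}_{l=x(k)}$, the $(i,j)$ entry of $C=AXA$ is
\[ C_{ij} = \sum_{k,l} A_{ik} X_{kl} A_{lj} = \sum_{k=1}^n A_{ik} A_{x(k),j}, \]
so by linearity $\expec[C_{ij}] = \sum_k \expec[A_{ik} A_{x(k),j}]$ and it remains to evaluate each term. Since the entries of $A$ on or above the diagonal are independent centered Gaussians and $A$ is symmetric, the product $A_{ab}A_{cd}$ has nonzero expectation only when $\{a,b\}=\{c,d\}$ as unordered pairs, in which case $\expec[A_{ab}A_{cd}]=\Var(A_{ab})$, equal to $1/n$ if $a\neq b$ and $2/n$ if $a=b$.

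Next I would split on whether $i=j$. For $i=j$, the condition $\{i,k\}=\{x(k),i\}$ holds exactly when $k$ is a fixed point of $x$ (the apparent alternative $i=x(k),\,k=i$ forces $x(i)=i$, so it is already among the fixed points), and then the term equals $\Var(A_{ik})$; summing over $k\in S_X$ gives $\tfrac{|S_X|-1}{n}+\tfrac{2}{n}$ when $i\in S_X$ (the diagonal entry $A_{ii}$ contributes $2/n$) and $\tfrac{|S_X|}{n}$ otherwise, i.e.\ $\expec[C_{ii}]=s_x+\tfrac1n\mathbbm{1}_{i\in S_X}$. For $i\neq j$, the condition $\{i,k\}=\{x(k),j\}$ forces $k=j$ and $x(j)=i$, and in that case the term is $\Var(A_{ij})=1/n$ since $i\neq j$; hence $\expec[C_{ij}]=\tfrac1n\mathbbm{1}_{x(j)=i}$. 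The gap estimate then follows immediately: both $\mathbbm{1}_{i\in S_X}$ and $\mathbbm{1}_{x(j)=i}$ lie in $\{0,1\}$, so $s_x-\tfrac1n\leq \expec[C_{ii}]-\expec[C_{ij}]\leq s_x+\tfrac1n$ for $i\neq j$.

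The computation is essentially routine; the only point requiring care is the bookkeeping around the diagonal entries of $A$ (whose variance is $2/n$ rather than $1/n$) and making sure the enumeration of coincidences $\{a,b\}=\{c,d\}$ is exhaustive and not double-counted, in particular that the stray $k=i$ case in the diagonal computation is genuinely subsumed by "$k$ is a fixed point". I therefore do not anticipate any real obstacle: this lemma is a warm-up whose purpose is to supply the mean that feeds the concentration arguments in the remaining steps of the proof of Proposition~\ref{prop:diago_dom}.
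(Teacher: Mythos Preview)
Your proposal is correct and follows essentially the same approach as the paper: expand $C_{ij}=\sum_k A_{ik}A_{x(k),j}$ and identify the summands with nonzero expectation via the independence structure of the GOE entries. If anything, your treatment is slightly more explicit than the paper's about the role of the diagonal variance $2/n$ and the exhaustive case analysis for $\{i,k\}=\{x(k),j\}$; the paper's short proof glosses over these points.
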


\begin{lemma}\label{lem:tailbounds} 
Assume that $s_x\in(10/n,1]$ and $n\geq 10$. Then for $i,j\in[n]$ with $i\neq j$ we have
\begin{align}\label{eq:bounddiag}
    \prob(C_{ii}\leq s_x/2)&\leq 4 e^{-\frac{s_x^2}{48}n}, \\
    \label{eq:boundoffdiag}
    \prob(C_{ij}\geq s_x/2)&\leq 3e^{-\frac{s_x^2}{96}n}. 
\end{align}
\end{lemma}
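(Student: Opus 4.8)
The plan is to express both $C_{ii}$ and $C_{ij}$ as forms in the independent Gaussian entries of the relevant rows of $A$ and then apply Gaussian concentration. Recall that $C=AXA$, so $C_{ij}=\sum_k A_{ik}A_{x(k),j}$, and write $g_k:=A_{ik}$ for the entries of row $i$; these are mutually independent with $\Var(g_k)=1/n$ for $k\neq i$ and $\Var(g_i)=2/n$. Let $D:=\diag(\Var(g_k))$ denote the corresponding diagonal covariance.

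First, the diagonal bound. Since $A_{x(k),i}=g_{x(k)}$, we have $C_{ii}=\sum_k g_k g_{x(k)}=P+Q$, where $P:=\sum_{k\in S_X}g_k^2$ collects the fixed points and $Q:=\sum_{k\notin S_X}g_k g_{x(k)}$ the rest. By Lemma~\ref{lem:expectation}, $\expec[P]\geq s_x$ and $\expec[Q]=0$, so $\{C_{ii}\leq s_x/2\}\subseteq\{P\leq \tfrac34 s_x\}\cup\{Q\leq -\tfrac14 s_x\}$ and it suffices to bound each piece. For $P$, a weighted sum of $|S_X|=s_x n$ independent $\chi^2_1$ variables with weights of order $1/n$, the chi-squared lower-tail (Laurent--Massart) bound at deviation $s_x/4$ gives a bound of the form $\exp(-c\,s_x n)\leq \exp(-c\, s_x^2 n)$, which is not the binding term. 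For $Q=g^\top M g$, where $M$ is the permutation matrix $X$ with its diagonal removed, I would center and apply the Hanson--Wright inequality to the Gaussian chaos $\xi^\top \widetilde M\xi$ with $\widetilde M:=D^{1/2}M_s D^{1/2}$ and $M_s:=(M+M^\top)/2$. The key point is that $M$ has at most one nonzero entry per row, so $M_s$ has $O(n)$ nonzero entries, whence $\|\widetilde M\|_F^2\lesssim (1-s_x)/n\leq 1/n$ and $\|\widetilde M\|_{\mathrm{op}}\lesssim 1/n$ (using $\|M_s\|_{\mathrm{op}}\leq 1$). At deviation $s_x/4$ the quadratic (sub-Gaussian) branch of Hanson--Wright dominates, giving $\prob(Q\leq -s_x/4)\leq 2\exp(-c\, s_x^2 n)$. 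Combining the two tails yields \eqref{eq:bounddiag}.

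Second, the off-diagonal bound, which I would obtain by conditioning to linearize. Since $\expec[C_{ij}]=\tfrac1n\mathbbm{1}_{x(j)=i}\leq s_x/4$ (here $s_x>10/n$), it is enough to bound $\prob(C_{ij}-\expec[C_{ij}]\geq s_x/4)$. Condition on the whole of row $i$ (hence on $g$ and on $A_{ij}$); the entries $\{A_{mj}:m\neq i\}$ are independent Gaussians independent of row $i$, and I would split off the unique term in which $A_{x(k),j}=A_{ij}$ is frozen:
\begin{equation*}
C_{ij}=g_{x^{-1}(i)}A_{ij}+\sum_{k:\,x(k)\neq i}g_k A_{x(k),j}.
\end{equation*}
Conditionally on row $i$ this is Gaussian with mean $\mu=g_{x^{-1}(i)}A_{ij}$ and variance $\tau^2=\sum_{k:\,x(k)\neq i}g_k^2\,\Var(A_{x(k),j})\leq \tfrac2n\|g\|^2$. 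On the good event $\calG:=\{\|g\|^2\leq 2\}\cap\{|\mu|\leq s_x/4\}$ we have $\tau^2\leq 4/n$, so the Gaussian tail gives $\prob(C_{ij}\geq s_x/2\mid \text{row }i)\leq \exp\!\big(-\tfrac{(s_x/4)^2}{2\tau^2}\big)\leq \exp(-c\,s_x^2 n)$. Finally $\prob(\calG^c)\leq \exp(-cn)+2\exp(-c's_x^2 n)$, by the $\chi^2$ concentration of $\|g\|^2$ (whose mean is $\approx 1$) and by bounding $|\mu|$ as a product of two Gaussians of standard deviation $O(1/\sqrt n)$ via $\{|\mu|>t\}\subseteq\{|g_{x^{-1}(i)}|>\sqrt t\}\cup\{|A_{ij}|>\sqrt t\}$; both are dominated by the target rate since $s_x\leq 1$. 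Taking expectations over row $i$ and adding $\prob(\calG^c)$ yields \eqref{eq:boundoffdiag}.

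The main obstacle is quantitative rather than structural: pinning down the absolute constants $1/48$ and $1/96$. This requires (i) optimizing the split of the deficit $s_x/2$ between $P$ and $Q$ in the diagonal case, (ii) verifying that at the chosen deviation one is genuinely in the sub-Gaussian (Frobenius-norm) regime of Hanson--Wright, for which the estimate $\|\widetilde M\|_F^2\lesssim 1/n$ via the single-nonzero-per-row structure of $M$ is essential, and (iii) checking that the auxiliary bad events (the $\chi^2$ fluctuations of $\|g\|^2$ and the size of $\mu$) contribute only $e^{-\Omega(n)}\leq e^{-\Omega(s_x^2 n)}$ and so do not degrade the exponent. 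The prefactors $4$ and $3$ then arise from the two-sided Hanson--Wright and chi-squared bounds together with these auxiliary terms.
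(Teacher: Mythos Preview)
Your approach is correct in outline and would yield bounds of the form $Ce^{-cs_x^2 n}$, but it is genuinely different from the paper's argument, and the constants you obtain will be looser than the stated $1/48$ and $1/96$.

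For the diagonal term, the paper does \emph{not} split $C_{ii}$ into a fixed-point part $P$ and a chaos part $Q$. Instead it writes $C_{ii}=\langle \tilde a_i, X\tilde a_i\rangle + \calZ_i$ (with a small adjustment $\tilde a_i$ to make $\sqrt n\,\tilde a_i$ exactly standard Gaussian) and then diagonalizes $\tfrac12(X+X^T)$: by rotation invariance, $n\langle \tilde a_i,X\tilde a_i\rangle\stackrel d=\sum_i \lambda_i g_i^2$ with $\sum_i\lambda_i=s_xn$ and $|\lambda_i|\le 1$. A single application of the Laurent--Massart inequality (Corollary~\ref{cor:lau_mass}) then gives the full tail with the constant $1/48$ directly. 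Your $P+Q$ split with Hanson--Wright on $Q$ is valid, but the Hanson--Wright constant is not sharp and the two-step union bound loses a further factor; you will not reach $1/48$ this way.

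For the off-diagonal term, the paper again avoids conditioning: it zeroes out the shared coordinate to make $\tilde a_i$ and $\tilde a_j$ independent, and then uses the polarization identity $\langle \tilde a_i,X\tilde a_j\rangle=\|\tfrac12(\tilde a_i+X\tilde a_j)\|^2-\|\tfrac12(\tilde a_i-X\tilde a_j)\|^2$ to reduce to a difference of two independent weighted $\chi^2$ sums, handled once more by Laurent--Massart. Your conditioning argument is a legitimate alternative and in some ways cleaner conceptually, but it introduces the auxiliary events $\{\|g\|^2\le 2\}$ and $\{|\mu|\le s_x/4\}$ whose error terms, while of order $e^{-cs_xn}\le e^{-cs_x^2 n}$, inflate the prefactor beyond $3$ and degrade the exponent relative to $1/96$.

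In short: the paper's unifying device is to reduce \emph{both} cases to (differences of) weighted $\chi^2$ sums via the eigendecomposition of $\tfrac12(X+X^T)$ or polarization, and to apply the same Laurent--Massart bound throughout. This is what delivers the explicit constants. Your Hanson--Wright/conditioning route is sound for the qualitative statement but is not the path to the numbers in the lemma.
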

With this we can prove Proposition \ref{prop:diago_dom} part $(i)$.

\begin{proof}[Proof of Prop. \ref{prop:diago_dom} $(i)$]
Define the event $\mathcal{E}_j=\{C_{ii}<\frac{s_x}2\}\cup \{C_{ij}>\frac{s_x}2\}$ and note that for $j\neq i$, we have $\{C_{ij}>C_{ii}\}\subset\mathcal{E}_j$. With this and the bounds \eqref{eq:bounddiag} and \eqref{eq:boundoffdiag} we have 
\begin{align*}
    \prob\big(\exists j\neq i: C_{ij}>C_{ii}\big)&=\prob(\cup_{j\neq i}\{C_{ij}>C_{ii}\})\\
    &\leq \prob(\cup_{j\neq i}\mathcal{E}_j)\\
    &\leq  \prob(C_{ii}\leq \frac{s_x}{2})+\sum_{j\neq i} \prob(C_{ij}\geq \frac{s_x}{2})\\
    &\leq 4e^{-\frac{s_x^2}{96}n}+3(n-1)e^{-\frac{s_x^2}{96}n}\\
    &\leq 4ne^{-\frac{s_x^2}{96}n}.
\end{align*}
\end{proof}
The proof of Lemma \ref{lem:expectation} is short and we include it in the main body of the paper. On the other hand, the proof of Lemma \ref{lem:tailbounds} mainly uses concentration inequalities for Gaussian quadratic forms, but the details are quite technical. Hence we delay its proof to Appendix \ref{app:diagdom_row_noiseless}. Before proceeding with the proof of Lemma \ref{lem:expectation}, observe that the following decomposition holds for the matrix $C$.
\begin{equation}\label{eq:Cdecom}
C_{ij}=\sum_{k,k'}A_{ik}X_{k,k'}A_{k'i} 
= \begin{cases}
\sum_{k\in S_X}A^2_{ik}+\sum_{k\notin S_X}A_{ik}A_{ix(k)} \enskip\text { for }i=j,\\
\sum^n_{k=1}A_{ik}A_{x(k)j} \enskip\text{ for }i\neq j.
\end{cases}
\end{equation}
\begin{proof}[Proof of Lemma \ref{lem:expectation}]
From \eqref{eq:Cdecom} we have that 
\begin{align*}
    \expec[C_{ii}]
    =\sum_{k\in S_X}\expec[A^2_{ik}]+\sum_{k\notin S_X}\expec[A^2_{ik}]
    =\frac{|S_X|}n+\frac{\mathbbm{1}_{i\in S_X}}n.
\end{align*}
Similarly, for $j\neq i$ it holds
\begin{align*}
    \expec[C_{ij}] =\sum^n_{k=1}\expec[A_{ik}A_{x(k)j}]
    =\frac1n\mathbbm{1}_{i,j\notin S_X, x(j)=i}
    =\frac{\mathbbm{1}_{x(j)=i}}n
\end{align*}
from which the results follows easily.
\end{proof}

The proof of Proposition \ref{prop:diago_dom} part $(ii)$ which corresponds to the case $\sigma\neq 0$ uses similar ideas and the details can be found Appendix \ref{app:diagdom_row_noise}. 

\subsection{Proof of Theorem \ref{prop:partial_rec}} \label{subsec:proof_thm_partial_rec}
The proof of Theorem \ref{prop:partial_rec} will be based on the following lemma, which extends Proposition \ref{prop:diago_dom}. 

\begin{lemma}\label{lem:not_rc_dom}
For a fixed $i\in[n]$, we have 
\begin{equation*}
  \prob(C_{ii} \textup{ is not row-column dominant})\leq 16ne^{-c(\sigma)s_x^2n}.
\end{equation*}
\end{lemma}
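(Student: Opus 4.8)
The plan is to deduce Lemma~\ref{lem:not_rc_dom} from Proposition~\ref{prop:diago_dom}(ii), treating row-dominance and column-dominance of the entry $C_{ii}$ separately and reducing the column case to the row case by transposition. The first step is the event decomposition
\[
\{C_{ii}\text{ not row-column dominant}\}=\Big\{\exists\, j\ne i:\ C_{ij}\ge C_{ii}\Big\}\ \cup\ \Big\{\exists\, i'\ne i:\ C_{i'i}\ge C_{ii}\Big\},
\]
after which a union bound reduces the task to bounding each of the two events on the right. Since the difference of any two entries of $C=AXB$ is a non-degenerate quadratic polynomial in the jointly Gaussian entries of $(A,B)$, almost surely no two entries of $C$ coincide, so throughout one may freely replace ``$\ge$'' by ``$>$''.

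For the row event this is \emph{verbatim} Proposition~\ref{prop:diago_dom}(ii): $\prob(\exists\, j\ne i: C_{ij}>C_{ii})\le 5n\,e^{-c(\sigma)s_x^2 n}$. For the column event, I would write $C_{i'i}=(C^\top)_{ii'}$ with $C^\top=(AXB)^\top=BX^\top A$ (using that $A,B$ are symmetric). Then two observations put $C^\top$ back into the scope of Proposition~\ref{prop:diago_dom}: (a) $X^\top\in\calP_n$ has the same fixed-point set, hence the same $s_x\ge 10/n$, as $X$; and (b) under the CGW model the pair $(A,B)$ is exchangeable, $(A,B)\stackrel{d}{=}(B,A)$, because $(A_{ij},B_{ij})$ is a centred Gaussian vector with equal marginal variances, so its covariance matrix is invariant under swapping the two coordinates. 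Consequently $C^\top=BX^\top A\stackrel{d}{=}AX^\top B$, and Proposition~\ref{prop:diago_dom}(ii) applied with the permutation $X^\top$ gives $\prob(\exists\, i'\ne i: C_{i'i}>C_{ii})=\prob(\exists\, i'\ne i: (C^\top)_{ii'}>(C^\top)_{ii})\le 5n\,e^{-c(\sigma)s_x^2 n}$. Adding the two bounds yields $\prob(C_{ii}\text{ not row-column dominant})\le 10n\,e^{-c(\sigma)s_x^2 n}\le 16n\,e^{-c(\sigma)s_x^2 n}$, as claimed.

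I expect no real analytic obstacle here, since all the concentration work is already packaged in Proposition~\ref{prop:diago_dom}; the one point requiring care is the column reduction, namely checking that transposing $C$ indeed lands inside the hypotheses of Proposition~\ref{prop:diago_dom} — this rests precisely on the two bookkeeping facts that $X^\top$ shares the fixed-point fraction of $X$ and that the CGW pair is exchangeable. Should one prefer to avoid invoking exchangeability, the fully equivalent alternative is to rerun the short expectation computation of Lemma~\ref{lem:expectation} and the Gaussian quadratic-form tail bounds of Lemma~\ref{lem:tailbounds}/Proposition~\ref{prop:diago_dom} for the column gaps $C_{ii}-C_{i'i}=\sum_{k,k'}(A_{ik}-A_{i'k})X_{kk'}B_{k'i}$, which is completely parallel to the argument already carried out for the row gaps $C_{ii}-C_{ij}$.
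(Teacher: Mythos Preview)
Your proposal is correct and follows essentially the same route as the paper: split the ``not row-column dominant'' event into its row part and its column part, use Proposition~\ref{prop:diago_dom}(ii) for the row part, and reduce the column part to the row part with $X^\top$ via transposition. The only cosmetic difference is that you invoke the exchangeability $(A,B)\stackrel{d}{=}(B,A)$ to get $C^\top\stackrel{d}{=}AX^\top B$ in one stroke, whereas the paper decomposes $C=\sqrt{1-\sigma^2}\,C'+\sigma\,C''$ and handles the pieces separately (noting $C'_{ji,x}=C'_{ij,x^{-1}}$ exactly and $C''_{ji,x}\stackrel{d}{=}C''_{ij,x^{-1}}$); your packaging is a bit cleaner and even yields the slightly better constant $10n$ in place of $16n$.
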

The proof of Lemma \ref{lem:not_rc_dom} is included in Appendix \ref{app:lem_not_rc_dom}. We now prove Theorem \ref{prop:partial_rec}. 
The main idea is that for a fixed $i\in[n]$, with high probability the term $C_{ii}$ will be the largest term in the $i$-th row and the $i$-th column, and so \texttt{GMWM} will assign $\pi(i)=i$. We will also use the following event inclusion, which is direct from \eqref{eq:overlap_event} in Lemma \ref{lem:overlap_event}.

\begin{equation}
    \label{eq:overlap_event2}
    \{\operatorname{overlap}(\pi,\operatorname{id})< r/n\}\subset\bigcup^r_{i=1}\{C_{ii} \text{ is not row-column dominant }\}.
\end{equation}
\begin{proof}[Proof of Theorem \ref{prop:partial_rec} ]
Fix $i\in[n]$. By  \eqref{eq:overlap_event2} we have that 
\begin{align*}
    \prob(\operatorname{overlap}(\pi,\operatorname{id})\leq r/n)&\leq \sum^r_{i=1}\prob(C_{ii} \text{ is not row-column dominant})\\
    &\leq \sum^r_{i=1}\prob(\exists j\neq i,\text{ s.t }C_{ij}\vee C_{ji}>C_{ii} )\\
    &\leq 16rne^{-c(\sigma) s_x^2n}
\end{align*}
where we used Lemma \ref{lem:not_rc_dom} in the last inequality.
\end{proof}

\begin{remark} Notice that the RHS of \eqref{eq:overlap_event2} is a superset of the RHS of \eqref{eq:overlap_event}. To improve this, it is necessary to include dependency information. In other words, we need to `beat Hölder's inequality'. To see this, define \[
E_i:=\mathbbm{1}_{C_{ii}\text{ is not row-column dominant }},\enskip \varepsilon_{I}:=\mathbbm{1}_{\sum_{i\in I}E_i>0}, \text{ for } I\subset [n];
\] 
then $\varepsilon_{I'}$, for $I'=[r]$, is the indicator of the event in the RHS of \eqref{eq:overlap_event2}. On other hand, the indicator of the event in the RHS of \eqref{eq:overlap_event} is ${\displaystyle \prod_{\substack{I\subset[n],|I|=r}}}\varepsilon_I$. If $\expec\big[\varepsilon_I\big]$ is equal for all $I$, then Hölder inequality gives \[\expec\Big[{\displaystyle \prod_{\substack{I\subset[n],|I|=r}}}\varepsilon_I\Big]\leq \expec[\varepsilon_{I'}]\] which does not help in quantifying the difference between \eqref{eq:overlap_event} and \eqref{eq:overlap_event2}. This is not surprising as we are not taking into account the dependency between the events $\varepsilon_I$ for the different sets $I\subset[n],|I|=r$.
\end{remark}

\subsection{Proof of Theorem \ref{thm:unif_rec_ppm}} \label{subsec:proof_unif_seed_ppm}
The general proof idea is based on the decoupling strategy used by \citep{MaoRud} for \erdos-R\'enyi graphs. To extend their result from binary graphs to weighted graphs, we need to use an appropriate measure of similarity. For $i, i'\in [n], W\subset [n]$ and $g\in \calS_n$, let us define 
\[ 
\langle A_{i:}, B_{i':} \rangle_{g,W} := \sum_{j\in W} A_{ig(j)}B_{i'j}
\] 
to be the similarity between $i$ and $i'$ restricted to $W$ and measured with a scalar product depending on $g$ (the permutation used to align $A$ and $B$). When $g=\operatorname{id}$ or $W=[n]$ we will drop the corresponding subscript(s). If $A$ and $B$ were binary matrices, we would have the following correspondence  \[ \langle A_{i:}, B_{i':} \rangle_{g,W} = |g(\calN_A(i)\cap W)\cap \calN_B(i') |.\]  This last quantity plays an essential role in Proposition 7.5 of \citep{MaoRud}. Here $g(\calS)$ denotes the image of a set $\calS \subseteq [n]$ under permutation $g$, and $\mathcal{N}_A(i)$ represents the set of neighboring vertices of $i$ in the graph $A$. This new measure of similarity has two main implications on the proof techniques, marking a departure from the work in \cite{MaoRud}. Firstly, one can no longer rely on the fact that $\langle A_{i:}, B_{i':} \rangle_{g,W}$ is non-negative. Secondly, we will need different concentration inequalities to handle these quantities.

\paragraph{ Step 1.} The algorithm design relies on the fact that if the matrices $A$ and $B$ were correctly aligned then the correlation between $A_{i:}$ and $B_{i:}$ should be large and the correlation between $A_{i:}$ and $B_{i':}$ should be small for all $i\neq i'$. The following two lemmas precisely quantify these correlations when the two matrices are well aligned.

\begin{lemma}[Correlation between corresponding nodes]\label{lem:nb_ngbh1_mt}
Let $(A,B)\sim W(n,\sigma, x^*=\operatorname{id})$ and assume that the diagonals of $A$ and $B$ have been removed. Then for $n$ large enough \revb{(larger than a constant)}, we have with probability at least $1-n^{-2}$ that
\[ \langle A_{i:}, B_{i:}\rangle \geq \sqrt{1-\sigma^2}(1-\epsilon_1)-\sigma \epsilon_2 \text{ for all } i\in [n], \]
where \revb{$0 < \epsilon_1, \epsilon_2 \leq  C\sqrt{\frac{\log n}{n}}$ for a constant $C>0$}.
\end{lemma}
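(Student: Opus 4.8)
The plan is to exploit the explicit form of the CGW model to split $\langle A_{i:}, B_{i:}\rangle$ into a dominant ``signal'' term plus a small mean-zero ``noise'' term, and then apply standard Gaussian and $\chi^2$ concentration together with a union bound over the $n$ rows. Since $x^* = \operatorname{id}$, the model gives $B_{ij} = \sqrt{1-\sigma^2}\, A_{ij} + \sigma Z_{ij}$ for $i\neq j$, with $Z \stackrel{d}{=} A$ independent of $A$; removing the diagonals does not affect the off-diagonal entries, so
\[
\langle A_{i:}, B_{i:}\rangle \;=\; \sum_{j\neq i} A_{ij}B_{ij} \;=\; \sqrt{1-\sigma^2}\sum_{j\neq i} A_{ij}^2 \;+\; \sigma\sum_{j\neq i} A_{ij}Z_{ij}.
\]
The first term equals $\sqrt{1-\sigma^2}$ times a nonnegative quantity concentrating around $1$; the second has mean zero and is of order $n^{-1/2}$. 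It thus suffices to show that, with probability at least $1-n^{-2}$, simultaneously for all $i$, the first factor is $\geq 1-\epsilon_1$ and the second sum is $\geq -\epsilon_2$, for $\epsilon_1,\epsilon_2$ of order $\sqrt{\log n/n}$; the claimed bound then follows since $\sigma\geq 0$.

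For the signal term, for fixed $i$ we have $n\sum_{j\neq i}A_{ij}^2\sim\chi^2_{n-1}$ (the off-diagonal entries in one row of a GOE matrix are i.i.d.\ $\mathcal N(0,1/n)$), so the Laurent--Massart lower-tail bound gives $\prob\big(\sum_{j\neq i}A_{ij}^2 \leq \tfrac{n-1}{n}-t\big)\leq e^{-cnt^2}$ for $t$ in a suitable range; taking $t\asymp\sqrt{\log n/n}$ makes this at most $\tfrac12 n^{-3}$, and folding in the $\tfrac1n$ deficit yields $\sum_{j\neq i}A_{ij}^2\geq 1-\epsilon_1$ with $\epsilon_1 = C\sqrt{\log n/n}$. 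For the noise term, condition on $A$: then $\sum_{j\neq i}A_{ij}Z_{ij}$ is centered Gaussian with variance $\tfrac1n\sum_{j\neq i}A_{ij}^2$, which on the event just described is at most, say, $2/n$; a Gaussian tail bound then gives $\prob\big(\sum_{j\neq i}A_{ij}Z_{ij}\leq -\epsilon_2\big)\leq\tfrac12 n^{-3}$ for $\epsilon_2 = C'\sqrt{\log n/n}$. (Equivalently, one may bound the sub-exponential bilinear form $\sum_{j\neq i}A_{ij}Z_{ij}$ directly via a Hanson--Wright / Bernstein inequality without conditioning.)

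Finally, for each $i$ take the union of the two failure events and then a union bound over $i\in[n]$: the total failure probability is at most $n\cdot 2\cdot\tfrac12 n^{-3}=n^{-2}$. On the complementary event, for every $i$,
\[
\langle A_{i:}, B_{i:}\rangle \;\geq\; \sqrt{1-\sigma^2}(1-\epsilon_1) + \sigma(-\epsilon_2) \;=\; \sqrt{1-\sigma^2}(1-\epsilon_1)-\sigma\epsilon_2,
\]
which is the assertion of the lemma.

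The only nonroutine point is the cross term $\sum_{j\neq i}A_{ij}Z_{ij}$: being a product of two independent Gaussian vectors it is only sub-exponential, not sub-Gaussian, so one cannot apply a Gaussian tail bound to it directly. The clean resolution is to condition on $A$, turning it into a genuine Gaussian whose (random) variance is already controlled by the signal-term estimate; the alternative is a Hanson--Wright-type bound. Everything else is constant-chasing to guarantee that $\epsilon_1,\epsilon_2$ can be taken of the stated order $\sqrt{\log n/n}$ with slack for the union bound.
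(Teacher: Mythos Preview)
Your proof is correct and follows essentially the same route as the paper: the identical decomposition $\langle A_{i:},B_{i:}\rangle=\sqrt{1-\sigma^2}\sum_{j\neq i}A_{ij}^2+\sigma\sum_{j\neq i}A_{ij}Z_{ij}$, Laurent--Massart for the $\chi^2$ signal term, and a Gaussian bound on the cross term after controlling the row norm of $A$ (the paper phrases the latter via rotation invariance, writing $G^\top G'=\|G\|\cdot\xi$ with $\xi\sim\mathcal N(0,1)$ independent of $G$, which is exactly your conditioning argument). One minor slip: the variance bound $\tfrac1n\sum_{j\neq i}A_{ij}^2\leq 2/n$ needs an \emph{upper} bound on $\sum_{j\neq i}A_{ij}^2$, not the lower bound furnished by your signal event, so you must also invoke the (equally easy) Laurent--Massart upper tail, just as the paper does when it bounds $\|G\|\leq 2\sqrt n$.
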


\begin{lemma}[Correlation between different nodes]\label{lem:nb_ngbh2_mt}
Let $(A,B)\sim W(n,\sigma, \operatorname{id})$ and assume that the diagonals of $A$ and $B$ have been removed. Then for $n$ large enough \revb{(larger than a constant)}, we have with probability at least $1-n^{-2}$ that
\[ \left| \langle A_{i:}, B_{i':} \rangle\right|\leq \sqrt{1-\sigma^2}\revb{\epsilon_3}+\sigma \revb{\epsilon_4} \text{ for all } i,i'\in [n] \text{ such that } i'\neq i, \]
where \revb{$0 < \epsilon_3, \epsilon_4 \leq  C\sqrt{\frac{\log n}{n}}$ for a constant $C>0$}.
\end{lemma}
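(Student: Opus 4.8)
The plan is to expand the inner product using the definition of the CGW model and reduce the claim to a uniform concentration bound for two scalar bilinear forms in independent Gaussian vectors. Since $x^*=\operatorname{id}$, the model gives $B_{i'j}=\sqrt{1-\sigma^2}\,A_{i'j}+\sigma Z_{i'j}$ with $Z$ an independent copy of $A$. Removing the diagonals kills the $j=i$ and $j=i'$ terms, so that
\[
\langle A_{i:},B_{i':}\rangle \;=\; \sqrt{1-\sigma^2}\,S_1+\sigma\,S_2,\qquad S_1:=\sum_{j\neq i,i'}A_{ij}A_{i'j},\quad S_2:=\sum_{j\neq i,i'}A_{ij}Z_{i'j}.
\]
For $i\neq i'$ every summand is a product of two independent $\calN(0,1/n)$ variables, so $\expec S_1=\expec S_2=0$; moreover, precisely because the diagonal entries are removed, the families $\{A_{ij}\}_{j\neq i,i'}$, $\{A_{i'j}\}_{j\neq i,i'}$ and $\{Z_{i'j}\}_{j\neq i,i'}$ are three mutually disjoint collections of independent Gaussians, so the summands of $S_1$ and of $S_2$ are independent. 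By the triangle inequality it then suffices to exhibit, with probability at least $1-n^{-2}$, uniform bounds $\max_{i\neq i'}|S_1|\le\epsilon_3$ and $\max_{i\neq i'}|S_2|\le\epsilon_4$ with $\epsilon_3,\epsilon_4=O(\sqrt{\log n/n})$.

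To control $S_1$ for a fixed pair $i\neq i'$, I would condition on the entries $(A_{i'j})_{j\neq i,i'}$; conditionally, $S_1$ is a centered Gaussian with variance $\tfrac1n\sum_{j\neq i,i'}A_{i'j}^2$. On the event $\{\sum_{j\neq i,i'}A_{i'j}^2\le 2\}$, which by a standard $\chi^2$ tail bound fails with probability at most $e^{-cn}$, the conditional Gaussian tail gives $\prob(|S_1|>t\mid (A_{i'j})_j)\le 2e^{-nt^2/4}$; taking $t=\epsilon_3:=C_1\sqrt{\log n/n}$ with $C_1$ a large absolute constant yields $\prob(|S_1|>\epsilon_3)\le 2e^{-C_1^2\log n/4}+e^{-cn}\le n^{-5}$ for $n$ large. (Equivalently, one can bound $S_1$ directly via Bernstein's inequality for sums of independent sub-exponential variables, since $A_{ij}A_{i'j}$ has $\psi_1$-norm $\Theta(1/n)$ and $\sum_j\Var(A_{ij}A_{i'j})=\Theta(1/n)$, which gives the same $n^{-5}$ bound.) The identical argument, with $Z$ in place of the $i'$-th row of $A$, gives $\prob(|S_2|>\epsilon_4)\le n^{-5}$ for $\epsilon_4:=C_1\sqrt{\log n/n}$.

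Finally I would union bound over the at most $n^2$ ordered pairs $(i,i')$ with $i\neq i'$ and over the two events, so that with probability at least $1-2n^2\cdot n^{-5}\ge 1-n^{-2}$ we have $|S_1|\le\epsilon_3$ and $|S_2|\le\epsilon_4$ simultaneously for all such pairs; on this event the triangle inequality yields $|\langle A_{i:},B_{i':}\rangle|\le\sqrt{1-\sigma^2}\,\epsilon_3+\sigma\,\epsilon_4$ for all $i\neq i'$, with $0<\epsilon_3,\epsilon_4\le C\sqrt{\log n/n}$ as required. I do not expect a deep obstacle here; the only points that require care are (i) verifying that the diagonal removal genuinely decouples the summands, so that sharp Gaussian (or sub-exponential) concentration applies with no leftover cross terms, and (ii) choosing the constant $C_1$ large enough that the per-pair failure probability beats $n^{-4}$, which is exactly what makes the union bound over all $\Theta(n^2)$ pairs affordable while keeping the error $O(\sqrt{\log n/n})$.
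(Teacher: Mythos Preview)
Your proposal is correct and follows essentially the same route as the paper: decompose $\langle A_{i:},B_{i':}\rangle$ into $\sqrt{1-\sigma^2}\langle A_{i:},A_{i':}\rangle+\sigma\langle A_{i:},Z_{i':}\rangle$, bound each bilinear form by controlling the norm of one Gaussian vector and then the remaining conditionally Gaussian factor, and finish with a union bound over the $O(n^2)$ pairs. The only cosmetic difference is that the paper phrases the key step via rotation invariance (writing $G^\top G'=\|G\|\cdot\bigl((G/\|G\|)^\top G'\bigr)$ with the second factor $\calN(0,1)$ independent of $G$), whereas you phrase it as conditioning on one vector; these are the same argument.
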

The proofs of Lemma's \ref{lem:nb_ngbh1_mt} and \ref{lem:nb_ngbh2_mt} can be found in Appendix \ref{sec:app_thm3}. 

\paragraph{Step 2.} Since the ground truth alignment between $A$ and $B$ is unknown, we need to use an approximate alignment (provided by $X^{(0)}$). It will suffice that $X^{(0)}$ is close enough to the ground truth permutation. This is linked to the fact that if $|S_{X^{(0)}}|$ is large enough then the number of nodes for which there is a substantial amount of information contained in ${S^c_{X^{(0)}}}$ is small. This is shown in the following lemma.

\begin{lemma}[Growing a subset of vertices]\label{lem:growing_vert}
%
%
Let $G$ a graph generated from the \revb{Gaussian} Wigner model with self-loops removed, associated with an adjacency matrix $A$. \revb{ For any $I\subseteq [n]$ and $\kappa\in(0,\frac12)$, define the random set
\[ \tilde{I}= \lbrace i \in [n]: \norm{A_{i:}}_{I^c}^2<8\kappa \rbrace .\]
}
Then for \revb{$n\geq \frac{C'}{\kappa}\log{n}$ where $C'>0$ is a large enough constant}, we have
\[\prob \left(\revb{\forall I\subseteq [n] \text{ with } |I|\geq (1-\kappa)n, \text{ it holds } } |\tilde{I}^c|\leq \frac{1}{4}|I^c| \right) \geq 1-e^{-c' \kappa n} \] for some constant $c' > 0$.
\end{lemma}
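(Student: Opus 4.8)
The plan is to prove the statement for a \emph{fixed} $I$ with $|I|\geq(1-\kappa)n$ with a sufficiently strong tail bound, and then union bound over the at most $2^n$ choices of such $I$. Fix such an $I$ and write $m:=|I^c|\leq\kappa n$. The key observation is that for each $i\in[n]$, the quantity $\norm{A_{i:}}_I^2 = \sum_{j\in I^c}A_{ij}^2$ is (up to at most one diagonal-removal adjustment, which is irrelevant since the diagonal was removed) a sum of $m$ independent $\calN(0,1/n)$ random variables, hence distributed as $\tfrac1n\chi^2_{m}$. So $i\notin\tilde I$ precisely when $\chi^2_m > 8\kappa n \geq 8m$, an event of probability at most $e^{-cm}$ for an absolute constant $c>0$ by a standard chi-square upper tail bound (e.g. Laurent–Massart), since $8m$ is a constant factor above the mean $m$. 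Thus $\expec|\tilde I^c| \leq n e^{-cm}$, but this expectation bound alone is not enough — I need that $|\tilde I^c|\leq m/4$ with probability exponentially close to $1$, uniformly.

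Next I would establish concentration of $|\tilde I^c| = \sum_{i}\mathbbm{1}\{\norm{A_{i:}}_I^2 \geq 8\kappa\}$. The subtlety is that the indicators $\mathbbm{1}\{i\notin\tilde I\}$ are \emph{not} independent across $i$, because $A_{ij}=A_{ji}$ couples rows $i$ and $j$ when both lie outside $I$. However, the entries $\{A_{ij}: i\in[n], j\in I^c, i<j \text{ or } i\in I\}$ form an independent family, and the event $\{i\notin\tilde I\}$ for $i\in I$ depends only on the disjoint blocks $\{A_{ij}:j\in I^c\}$, so the indicators for $i\in I$ are mutually independent; only the $m=|I^c|$ indicators for $i\in I^c$ are entangled (among themselves, via the $\binom{m}{2}$ symmetric entries). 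Since there are only $m\leq\kappa n$ of the latter, in the worst case they all contribute, so it suffices to show $\sum_{i\in I}\mathbbm{1}\{i\notin\tilde I\} \leq m/8$ with high probability and absorb the $\leq m$ entangled terms trivially (replacing the target $1/4$ by, say, controlling $|\tilde I^c|\leq m/8 + m$... — here one must be a bit more careful: actually one wants the $i\in I^c$ contribution handled too). A cleaner route: apply a Chernoff bound for the \emph{independent} indicators $\{i\in I: i\notin\tilde I\}$ to get that their sum is $\leq m/8$ except with probability $e^{-\Omega(m)}$ provided $ne^{-cm}\leq m/16$, i.e. provided $m\geq \frac{C'}{1}\log(n/m)$, which holds when $m\geq\kappa n \gtrsim \log n$ — this is exactly where the hypothesis $n\geq\frac{C'}{\kappa}\log n$ enters, forcing $m\gtrsim\log n$ so that $ne^{-cm}$ is much smaller than $m$. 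For the $i\in I^c$ terms, note a parallel bound using only the independence of the $\{A_{ij}:i,j\in I^c\}$ as a Gaussian chaos; alternatively one simply notes $|\tilde I^c\cap I^c|\leq m$ is too crude, so instead I would symmetrize: the full collection $\{A_{ij}: j\in I^c\}_{i\in[n]}$ can be viewed after a harmless sign/ordering bookkeeping as giving, for \emph{every} $i$, a $\chi^2$ statistic on $m$ (or $m-1$) independent coordinates, and then apply a concentration inequality for functions of independent Gaussians (bounded-differences fails due to unboundedness, but Gaussian concentration for the Lipschitz map $G\mapsto \#\{i: \norm{G_{i,\cdot}}^2\geq t\}$ does not apply either — so the honest path is the independence split above, handling $i\in I$ by Chernoff and bounding the $i\in I^c$ count by noting it is itself at most the number of rows of a symmetric $m\times m$ GOE-type block with large row norm, again a Chernoff bound over $m$ rows whose indicators, while dependent, satisfy negative-association-type or can be bounded by a union bound since $m$ is small).

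Finally, having shown $\prob(|\tilde I^c| > \tfrac14 |I^c|) \leq e^{-c'' m} \leq e^{-c''\kappa n / \kappa \cdot \kappa}$... more precisely $\leq e^{-c'' \kappa n}$ is \emph{not} immediate because $m$ could be as small as order $\log n$; instead I get $\leq e^{-c'' m}$ and then sum over the $\sum_{k\leq\kappa n}\binom{n}{k}\leq 2^{n H(\kappa)}$ choices of $I^c$ with $|I^c|=k$: for the union bound to close, I need $e^{-c'' k} \cdot \binom{n}{k}$ summed over $k$ to be small. Since $\binom{n}{k}\leq (en/k)^k$, we need $c'' k > k\log(en/k) + \omega(1)$, i.e. $c'' > \log(en/k)$, which \emph{fails} for small $k$. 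This is the main obstacle: the naive per-$I$ bound is not strong enough for small $|I^c|$. The fix, following the decoupling approach in \citep{MaoRud}, is to not union bound over all $I$ but to prove the statement in the contrapositive/monotone form — it suffices to prove it for $I = S_{X^{(0)}}$ ranging over sets of size \emph{exactly} $\lceil(1-\kappa)n\rceil$ (the worst case, by monotonicity of $\tilde I$ in $I$), and moreover to observe that $\tilde I$ only depends on $I$ through which columns are ``active,'' so the relevant randomness is the $n\times m$ Gaussian block with $m=\lceil\kappa n\rceil$ fixed; then $m=\Theta(\kappa n)$ and the per-instance bound $e^{-c''m}=e^{-\Omega(\kappa n)}$ already beats $\binom{n}{m}\leq e^{n H(\kappa)}$ precisely when $\kappa$ is below an absolute constant and, if needed, absorbing the entropy factor by taking the constant $C'$ in $n\geq\frac{C'}{\kappa}\log n$ large — wait, that does not help since both sides scale with $n$. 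The genuine resolution is that $H(\kappa)\to 0$ as $\kappa\to 0$ while the exponent $c''\kappa$ is linear in $\kappa$, so for $\kappa$ smaller than some absolute constant $\kappa_0$ we do have $c''\kappa > H(\kappa)$ and the union bound closes with room to spare, giving the claimed $1-e^{-c'\kappa n}$; for the regime of $\kappa$ near its allowed maximum one appeals to the explicit smallness of $\kappa = (9/410)^2(1-\sigma^2) \leq (9/410)^2 < \kappa_0$. So the hardest part is getting the constants to line up so that the linear-in-$\kappa$ exponent dominates the entropy $H(\kappa)$, which is where the specific small numerical value of $\kappa$ in Theorem \ref{thm:unif_rec_ppm} is used, and handling the row-dependence from the symmetry $A_{ij}=A_{ji}$ cleanly enough that the Chernoff bound over the independent sub-collection suffices.
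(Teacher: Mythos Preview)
Your proposal has several genuine gaps.

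First, the per-row tail bound: you write $\prob(\chi^2_m > 8\kappa n)\leq e^{-cm}$ by invoking $8\kappa n\geq 8m$. This throws away the much stronger bound $\prob(\chi^2_m > 8\kappa n)\leq e^{-c\kappa n}$, valid \emph{uniformly} in $m\leq\kappa n$ (apply Laurent--Massart with $t=\kappa n$). This seemingly minor point is exactly what makes the union bound close later.

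Second, your entropy comparison is backwards. You claim that for small $\kappa$ one has $c''\kappa > H(\kappa)$ since both vanish and the first is ``linear''. But $H(\kappa)\sim\kappa\log(1/\kappa)$ as $\kappa\to 0$, so $H(\kappa)/\kappa\to\infty$: a per-set bound of $e^{-c''\kappa n}$ can \emph{never} beat the $e^{nH(\kappa)}$ choices of $I$, no matter how small $\kappa$ is. This kills the fix you propose.

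Third, the monotonicity reduction to $|I^c|=\lceil\kappa n\rceil$ is invalid: if $I'\supset I$ then indeed $(\tilde{I'})^c\subset\tilde I^c$, but the target is $|(\tilde{I'})^c|\leq\tfrac14|(I')^c|$, and $|(I')^c|$ shrank as well; the ratio is not monotone.

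Fourth, and most importantly, the dependence among $\{\mathbbm{1}(i\in\tilde I^c)\}_{i\in I^c}$ is never resolved. Your split into $i\in I$ (independent) and $i\in I^c$ (dependent, then hand-waved as ``negative-association-type'') does not work: the trivial bound $|\tilde I^c\cap I^c|\leq m$ is useless for the target $|\tilde I^c|\leq m/4$, and no concentration for the dependent block is actually established.

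The paper resolves all of this with a deterministic decoupling lemma (Lemma~\ref{lem:decoupling}): if there exist $Q,W$ with $\|A_{i:}\|_W^2\geq M$ for all $i\in Q$, then one can find \emph{disjoint} $Q'\subset Q$, $W'\subset W$ with $|Q'|\geq|Q|/5$ and $\|A_{i:}\|_{W'}^2\geq M/2$ for all $i\in Q'$ (proved by randomly splitting $Q\cap W$). Once $Q'\cap W'=\emptyset$, the variables $(\|A_{i:}\|_{W'}^2)_{i\in Q'}$ are genuinely independent. The union bound is then over pairs $(Q',W')$, and the per-pair probability is at most $(e^{-\kappa n})^{|Q'|}$ with $|Q'|\geq|W|/20$. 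This exponent $|Q'|\cdot\kappa n$ is a \emph{product}; since $\kappa n\gtrsim\log n$ by hypothesis, it dominates the combinatorial factors $\binom{n}{|W'|}\binom{n}{|Q'|}$, and a geometric-series computation yields the claimed $1-e^{-c'\kappa n}$. The decoupling step is the key idea your argument is missing.
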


In order to prove this lemma we will need the following decoupling lemma.
\begin{lemma}[An elementary decoupling] \label{lem:decoupling} Let $M>0$ be a parameter and $G$ be a weighted graph on $[n]$, with weights of magnitude bounded by $1$ and without self loops, represented by an adjacency matrix $A\in [-1,1]^{n\times n}$. Assume that there are two subsets of vertices $Q,W\subset [n]$ such that
\[ \norm{A_{i:}}_W^2 \geq M \text{ for all } i\in Q.\]
Then there are subsets $Q'\subseteq Q$ and $W'\subseteq W$ such that $Q'\cap W' =\emptyset$, $|Q'|\geq |Q|/5$ and 
\[ \norm{A_{i:}}_{W'}^2 \geq M/2 \text{ for all } i\in Q'. \]
\end{lemma}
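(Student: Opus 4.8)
\textbf{Proof plan for Lemma \ref{lem:decoupling}.}
The plan is to prove this by a single random bipartition of the vertex set together with a one-line symmetrization argument, so there is essentially no computation. First I would sample a uniformly random set $T\subseteq[n]$ by putting each vertex into $T$ independently with probability $1/2$, and write $\xi_v=\mathbbm{1}[v\in T]$. Then set $W':=W\setminus T$ and let $Q'$ be the set of $i\in Q\cap T$ such that $\norm{A_{i:}}_{W'}^2\ge M/2$. By construction $Q'\subseteq Q$, $W'\subseteq W$, the inequality $\norm{A_{i:}}_{W'}^2\ge M/2$ holds on $Q'$ by definition, and $Q'\cap W'=\emptyset$ because $Q'\subseteq T$ while $W'\subseteq T^c$. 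So everything reduces to exhibiting a realization of $T$ with $|Q'|\ge|Q|/5$, and for that it suffices to lower bound $\expec|Q'|=\sum_{i\in Q}\prob(i\in Q')$.

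Next, fix $i\in Q$. Since $G$ has no self loops, $A_{ii}=0$, so on the event $\{\xi_i=1\}$ one has $\norm{A_{i:}}_{W'}^2=Y$ where $Y:=\sum_{j\in W,\,j\neq i}A_{ij}^2\,(1-\xi_j)$, a function of $\{\xi_j:j\in W\setminus\{i\}\}$ only, hence independent of $\{\xi_i=1\}$. Therefore $\prob(i\in Q')=\tfrac12\,\prob(Y\ge M/2)$. The crucial point is that $\prob(Y\ge M/2)\ge 1/2$ \emph{uniformly in $M$}: letting $Y':=\sum_{j\in W,\,j\neq i}A_{ij}^2\,\xi_j$ be the complementary sum, the substitution $\xi_j\mapsto 1-\xi_j$ gives $Y'\stackrel{d}{=}Y$, while deterministically $Y+Y'=\sum_{j\in W}A_{ij}^2=\norm{A_{i:}}_W^2\ge M$ (using $A_{ii}=0$). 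Consequently $\{Y\ge M/2\}\cup\{Y'\ge M/2\}$ is the whole sample space, so $2\,\prob(Y\ge M/2)\ge\prob(Y\ge M/2)+\prob(Y'\ge M/2)\ge 1$. Combining, $\prob(i\in Q')\ge 1/4$ for every $i\in Q$, whence $\expec|Q'|\ge|Q|/4>|Q|/5$ and a good realization exists.

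The only step needing care is the constant lower bound $\prob(Y\ge M/2)\ge 1/2$, and this is exactly where the symmetrization is doing real work: a naive second-moment (Paley–Zygmund) argument would degrade when $M$ is small or when few of the $A_{ij}$ are nonzero, whereas the $Y\leftrightarrow Y'$ symmetry is insensitive to both. The other thing to keep track of is the independence of $\{i\in T\}$ from the mass event, which uses precisely the no-self-loop hypothesis so that $Y+Y'$ equals $\norm{A_{i:}}_W^2$ exactly rather than $\norm{A_{i:}}_W^2-A_{ii}^2$; note that the boundedness $|A_{ij}|\le 1$ is in fact not used in this proof and could be dropped here.
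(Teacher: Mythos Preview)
Your proof is correct and follows essentially the same approach as the paper: a random bipartition, a symmetrization argument showing each $i\in Q$ lands in $Q'$ with probability at least $1/4$, and a first-moment conclusion. Your version is in fact slightly cleaner, since by randomizing over all of $[n]$ rather than only over $Q\cap W$ you avoid the paper's preliminary case split on whether $|Q\setminus W|\ge |Q|/5$ and obtain $\expec|Q'|\ge |Q|/4$ directly.
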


\begin{proof}
 If $|Q\setminus W|\geq |Q|/5$ then one can take $Q'=Q\setminus W$ and $W'= W$. So we can assume that $|Q\cap W|\geq 4|Q|/5$. Let $\Tilde{W}:=W\setminus Q$ and $\hat{Q}$ be a random  subset of $Q\cap W$ where each element $j\in Q\cap W$ is selected independently with probability $1/2$ in $\hat{Q}$. Consider the random disjoint sets $\hat{Q}$ and $W':=\tilde{W}\cup ((Q\cap W)\setminus \hat{Q})$. First, we will show the following claim.
\begin{claim}
For every $i \in Q\cap W$, we have 
$ \prob( \norm{A_{i:}}_{W'}^2\geq M/2 |i \in \hat{Q})\geq 1/2.$
\end{claim}
Indeed, we have by definition
\[  \norm{A_{i:}}_{W'}^2=\sum_{j \in W' } A_{ij}^2 = \sum_{j \in  W\cap Q } A_{ij}^2\indic_{j\not \in \hat{Q}} +\sum_{j \in  \tilde{W} } A_{ij}^2 .\]
By taking the expectation conditional on $i \in \hat{Q}$, we obtain
\[ \expec \left( \norm{A_{i:}}_{W'}^2 \middle| i \in \hat{Q} \right)
= \sum_{j \in  W\cap Q} \frac{A_{ij}^2}{2} + \sum_{j \in \tilde W} A_{ij}^2
\geq  \frac{1}{2}\sum_{j\in W}A_{ij}^2 \geq \frac{M}{2}.\]
But since $\sum_{j\in W\cap Q} A_{ij}^2(\indic_{j\not \in \hat{Q}}-\frac{1}{2})$ is a symmetric random variable we have that 
\[ \prob\left(\norm{A_{i:}}_{W'}^2\geq \expec(\norm{A_{i:}}_{W'}^2) \middle|i \in \hat{Q}\right) = 1/2\]
and hence 
\[ \prob\left(\norm{A_{i:}}_{W'}^2\geq \frac{M}{2} \middle|i \in \hat{Q}\right)\geq 1/2.
\]
Consequently, we have
\[ \expec\left(\sum_{i\in Q\cap W} \indic_{\lbrace \norm{A_{i:}}_{W'}^2\geq M/2 \rbrace} \indic_{i \in \hat{Q}}\right) = \sum_{i\in Q\cap W} \prob(i\in \hat{Q})\expec\left( \indic_{\lbrace \norm{A_{i:}}_{W'}^2\geq M/2 \rbrace}\middle| i\in \hat{Q}\right) \geq \frac{|Q\cap W|}{4} \geq \frac{|Q|}{5}.\]
Therefore, there is a realization $Q'$ of $\hat{Q}$ such that $Q'$ and $W'$ satisfy the required conditions.
\end{proof}

\begin{proof}[Proof of Lemma \ref{lem:growing_vert}]
\revb{Let us define $\delta:=8\kappa$, which will be used throughtout this proof.} By considering sets $W=I^c$ and $Q \subseteq \tilde{I}^c$ we obtain the following inclusion 
 \begin{align*}
     \lbrace & \revb{\exists I\subseteq [n] \text{ with } |I|\geq (1-\kappa)n,\text{ such that } }|\tilde{I}^c|> \frac{1}{4}|I^c|  \rbrace \subset \\
     &\calE:= \lbrace \exists\, Q,W\subseteq [n]: |W|\leq \kappa n, |Q|\geq |W|/4\neq 0, \norm{A_{i:}}_{W}^2\geq \delta \text{ for all }i\in Q  \rbrace . 
 \end{align*} 
 According to Lemma \ref{lem:decoupling}, $\calE$ is contained in
 \[\calE':=\lbrace  \exists\, Q', W'\subseteq [n]: |W'|\leq \kappa n, |Q'|\geq |W|/20\neq 0, Q'\cap W'= \emptyset,  \norm{A_{i:}}_{W'}^2\geq \delta/2 \text{ for all }i\in Q' \rbrace.\]
 For given subsets $Q'$ and $W'$, the random variables $(\norm{A_{i:}}_{W'}^2)_{i\in Q'}$ are independent. So, by a union bound argument we get 
 \begin{align*} \prob \left( \revb{\exists I\subseteq [n] \text{ with } |I|\geq (1-\kappa)n,\text{ such that } } |\tilde{I}^c|> \frac{1}{4}|I^c|  \right) &\leq\\
 \sum_{w=1}^{\ceil{\kappa n}}\sum_{|W'|=w}\sum_{k=\ceil{w/20}}^{n}\binom{n}{k}&\prob\left( \norm{A_{i:}}_{W'}^2\geq \delta/2 \right)^k. 
 \end{align*} 
 According to Lemma \ref{lem:lau_mass}, for the choice $t=\kappa n$  we have for all $W'$
\[ \prob\left( \norm{A_{i:}}_{W'}^2\geq \delta/2 \right) \leq  \prob\left( n\norm{A_{i:}}_{W'}^2\geq |W|+\sqrt{|W|t}+2t \right) \leq e^{-\kappa n}.\] 

Consequently, for $n$ large enough, we have 
\begin{align*} \prob \left( \revb{\exists I\subseteq [n] \text{ with } |I|\geq (1-\kappa)n,\text{ such that } } |\tilde{I}^c|> \frac{1}{4}|I^c|  \right) &\leq\\
\sum_{w=1}^{\ceil{\kappa n}}\sum_{k=\ceil{w/20}}^{n}&\left(\frac{en}{w}\right)^w\left(\frac{en}{k}\right)^ke^{-k\kappa n} < e^{-c\kappa n},\end{align*} for a constant $c>0$.
Indeed, since \[ \frac{en}{ke^{\kappa n}}<1\] \revb{because by assumption $n\geq \frac{C'}{\kappa}\log{n}$ , we obtain}
\[ \sum_{k=\ceil{w/20}}^{n} \left(\frac{en}{k}\right)^ke^{-k\kappa n} \leq C\left(\frac{en}{e^{\kappa n}}\right)^{\ceil{w/20}}\]
by the property of geometric series, where $C>0$ is a constant. But by the same argument 
\[ \sum_{w=1}^{\ceil{\kappa n}}\left(\frac{en}{w}\right)^w \left(\frac{(en)^{1/20}}{e^{\kappa n/20}}\right)^{w} \leq \frac{(en)^{1/20}}{e^{\kappa n/20}}\leq e^{-c\kappa n}\]
where $c > 0$ is a constant.
\end{proof}

\paragraph{Step 3.} We are now in position to show that at each step the set of fixed points of the permutation obtained with \ppm\, increases.

\begin{lemma}[Improving a partial matching]\label{lem:improve_matching}
\revb{Let $(A, B) \sim W(n, \sigma,\operatorname{id})$ with $\sigma\in[0,1)$, and define \[ \kappa := \left(\frac{9}{410}\right)^2(1-\sigma^2).\]For any $g\in\calS_n $, let us denote $\Tilde{g}$ as the output of one iteration of \ppm\, with $g$ as an input. Then, there exist constants $C',c>0$, such that the following is true. If $\frac{n}{\log{n}}\geq \frac{C'}{\kappa}$, then with probability at least $1-e^{-c\kappa n}-3n^{-2}$, it holds for all $g \in \calS_n$ satisfying $ |i\in [n]: g(i)=i |\geq (1-\kappa)n $ that \[ |\lbrace i\in[n]: \tilde{g}(i)=i \rbrace|\geq \frac{n}{2}+\frac{|\lbrace i\in[n]: g(i)=i \rbrace|}{2}.\]
}
\end{lemma}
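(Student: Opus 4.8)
The plan is to fix a permutation $g$ with at least $(1-\kappa)n$ fixed points, write $I = S_g$ for its set of fixed points, and show that most indices $i$ are "strongly anchored" to $S_g$ in the sense that the information in $S_g^c$ is negligible for them; for those $i$ we will argue the greedy projection $\tau$ assigns $\tilde g(i) = i$. Concretely, I would first invoke Lemma \ref{lem:growing_vert} with this $I$ (and the given $\kappa$): on an event of probability at least $1 - e^{-c'\kappa n}$, the set $\tilde I = \{i : \|A_{i:}\|_{I^c}^2 < 8\kappa\}$ satisfies $|\tilde I^c| \le \frac14 |I^c| = \frac14(n - |S_g|)$, uniformly over all such $I$. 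This is the step that makes the result uniform over seeds, and it is where the decoupling argument does its work.

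Next, for each $i \in \tilde I$, decompose the relevant entries of the similarity matrix $C = AgB$ (meaning $A X^{(0)} B$ with the permutation matrix of $g$): for any target $i'$, $C_{i i'} = \langle A_{i:}, B_{i':}\rangle_{g}$, and I split this as the part indexed by $S_g$ plus the part indexed by $S_g^c$. On $S_g$ the permutation $g$ acts as the identity, so the $S_g$-part is $\langle A_{i:}, B_{i':}\rangle_{\mathrm{id}, S_g}$, which differs from the full scalar product $\langle A_{i:}, B_{i':}\rangle_{\mathrm{id}}$ only by the $S_g^c$-indexed terms. The $S_g^c$-part is controlled because $i \in \tilde I$ forces $\|A_{i:}\|_{S_g^c}$ to be small, and one pairs this with a bound on $\|B_{i':}\|_{S_g^c}$ (for which one can afford a crude uniform bound over $i'$, since $|S_g^c| \le \kappa n$ is small — e.g. via Lemma \ref{lem:lau_mass} and a union bound over the at most $2^{\kappa n}$ choices of $S_g^c$, or simply over all $n$ rows with a generous norm bound), so by Cauchy--Schwarz the $S_g^c$-contribution to $C_{i i'}$ is $O(\sqrt{\kappa})$ in absolute value, uniformly in $i'$. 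Combining this with Lemmas \ref{lem:nb_ngbh1_mt} and \ref{lem:nb_ngbh2_mt}: $C_{ii} \ge \sqrt{1-\sigma^2}(1-\epsilon_1) - \sigma\epsilon_2 - O(\sqrt\kappa)$ while $C_{i i'} \le \sqrt{1-\sigma^2}\,\epsilon_3 + \sigma\epsilon_4 + O(\sqrt\kappa)$ for $i' \ne i$, and symmetrically for the column; since $\epsilon_k = O(\sqrt{\log n / n}) = o(1)$, the choice $\kappa = (9/410)^2(1-\sigma^2)$ is exactly calibrated so that $C_{ii}$ strictly dominates its row and column. Hence for every $i \in \tilde I$, the diagonal entry $C_{ii}$ is row-column dominant.

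Finally I would apply Lemma \ref{lem:overlap_event} (with $\{i_1,\dots,i_r\}$ taken to be $\tilde I$): row-column dominance of $C_{ii}$ for all $i \in \tilde I$ forces $\tilde g(i) = i$ for all $i \in \tilde I$, so the number of fixed points of $\tilde g$ is at least $|\tilde I| \ge n - |\tilde I^c| \ge n - \frac14(n - |S_g|) = \frac34 n + \frac14 |S_g| \ge \frac n2 + \frac{|S_g|}{2}$, using $|S_g| \le n$. Taking a union bound over the failure events — the Lemma \ref{lem:growing_vert} event ($e^{-c'\kappa n}$) and the two correlation events of Lemmas \ref{lem:nb_ngbh1_mt}, \ref{lem:nb_ngbh2_mt} plus the auxiliary norm bound on the $B$ rows ($O(n^{-2})$ each) — gives the claimed probability $1 - e^{-c\kappa n} - 3n^{-2}$, and the condition $n/\log n \ge C'/\kappa$ is inherited from Lemma \ref{lem:growing_vert} and what is needed to make $\epsilon_k$ small. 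The main obstacle I anticipate is the bookkeeping in the second step: one must show the $S_g^c$-contribution is uniformly small over all $i'$ (not just a fixed one) while keeping the bound $O(\sqrt\kappa)$ tight enough that it does not swamp the $\sqrt{1-\sigma^2}$ gap — this is precisely what pins down the constant $9/410$ and requires the correlation lemmas, the decoupling lemma, and the $B$-row norm bound to be combined with the right constants rather than just "big-$O$".
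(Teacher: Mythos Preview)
Your overall plan is the same as the paper's, but there is a genuine gap in the column-dominance step. You only apply Lemma~\ref{lem:growing_vert} to $A$, defining $\tilde I = \{i : \|A_{i:}\|_{I^c}^2 < 8\kappa\}$, and then claim that for $i \in \tilde I$ the entry $C_{ii}$ dominates its column ``symmetrically''. But controlling $C_{i'i} = \langle A_{i':}, B_{i:}\rangle_g$ on $I^c$ requires, via Cauchy--Schwarz, a bound on $\|B_{i:}\|_{I^c}$ --- not on $\|A_{i:}\|_{I^c}$. Membership $i \in \tilde I$ tells you nothing about $\|B_{i:}\|_{I^c}$ (recall $B = \sqrt{1-\sigma^2}A + \sigma Z$ with $Z$ independent of $A$), and if you fall back on the crude bound $\|B_{i:}\|_{I^c} \le \|B_{i:}\| \le 2$, the $I^c$-contribution to $C_{i'i}$ is bounded only by $2\cdot 2\cdot \|A_{i':}\|_{I^c}$, which is $O(1)$ rather than $O(\sqrt\kappa)$ and swamps the diagonal.

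The fix is exactly what the paper does: apply Lemma~\ref{lem:growing_vert} a second time to $B$, define $\tilde I' = \{i : \|B_{i:}\|_{I^c}^2 < 8\kappa\}$, and restrict to $i \in \tilde I \cap \tilde I'$. Then the column bound goes through just like the row bound (using $\|B_{i:}\|_{I^c} < \sqrt{8\kappa}$ paired with $\|A_{i':}\| \le 2$). The cost is that the final count becomes
\[
|\tilde I \cap \tilde I'| \ \ge\ n - |\tilde I^c| - |(\tilde I')^c| \ \ge\ n - \tfrac12|I^c| \ =\ \tfrac n2 + \tfrac{|S_g|}{2},
\]
which is precisely the stated conclusion; your stronger count $\tfrac34 n + \tfrac14|S_g|$ is not attainable by this argument and the extra slack you invoke at the end is illusory. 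The second application of the growing lemma is absorbed into the $e^{-c\kappa n}$ term, so the rest of your probability bookkeeping is fine and the remainder of your sketch then matches the paper's proof.
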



\begin{proof}
\revb{
For any $I\subseteq [n]$ such that $|I|\geq (1-\kappa)n$, define 
\begin{align*}
    \tilde{I}:=&\lbrace i\in [n]: \norm{A_{i:}}_{I^c}^2 < 8\kappa \rbrace ,\\
    \tilde{I}':=&\lbrace i\in [n]: \norm{B_{i:}}_{I^c}^2 < 8\kappa \rbrace .
\end{align*}
Consider the events
\begin{align*} 
\calE_1' &:=\lbrace \forall I\subseteq [n] \text{ s.t. } |I|\geq (1-\kappa)n, \text{ it holds that } |\tilde{I}^c|\vee |(\tilde{I}')^c|\leq \frac{1}{4}|I^c|  \rbrace , \\
\calE_2' &:=\lbrace \forall i\in [n] : \ \langle A_{i:}, B_{i:}\rangle \geq 0.9\sqrt{1-\sigma^2} \rbrace , \\
\calE_3' &:=\lbrace \forall i\neq i'\in [n]: \  |\langle A_{i:}, B_{i':}\rangle|<C\sqrt{\log n/n} \rbrace ,\\
\calE_4' &:=\lbrace \forall i\in [n]: \  \norm{A_{i:}}, \norm{B_{i:}}<2 \rbrace,
\end{align*}
where $C$ is the constant appearing in Lemmas \ref{lem:nb_ngbh1_mt} and \ref{lem:nb_ngbh2_mt}.}
\revb{
Define $\calE'=\cap_{i=1}^4 \calE_i'$. By Lemmas~\ref{lem:growing_vert}, \ref{lem:nb_ngbh1_mt}, \ref{lem:nb_ngbh2_mt} and \ref{lem:lau_mass} we have \[ \prob(\calE')\geq 1-e^{-c\kappa n}-3n^{-2}. \]}
\revb{Now condition on $\calE'$ and take any $g\in \calS_n$ such that $|\lbrace i\in [n]: g(i)=i\rbrace|\geq (1-\kappa)n$. Define $I$ as the set of fixed points of $g$.}
\revb{For all $i \in \tilde{I}\cap \tilde{I}' $ the following holds.
\begin{enumerate}
    \item We have \begin{align*}
        \langle A_{i:}, B_{i:}\rangle_g &\geq \langle A_{i:}, B_{i:}\rangle -|\langle A_{i:}, B_{i:}\rangle_{g,I^c}| -|\langle A_{i:}, B_{i:}\rangle_{I^c}|\\
        &\geq 0.9\sqrt{1-\sigma^2}- 2\norm{A_{i:}}_{I^c}\norm{B_{i':}}_{I^c} \tag{by $\calE'_3$, Cauchy-Schwartz and the fact that $g(I^c)=I^c$}\\
        &\geq 0.9\sqrt{1-\sigma^2}-16\kappa
    \end{align*}
    \item For all $i'\neq i$ \begin{align*}
        \langle A_{i:}, B_{i':}\rangle_g &\leq |\langle A_{i:}, B_{i':}\rangle|+ |\langle A_{i:}, B_{i':}\rangle_{I^c}|+|\langle A_{i:}, B_{i':}\rangle_{g,I^c}|\\
        &\leq C\sqrt{\frac{\log n}{n}}+ 2\norm{A_{i:}}_{I^c}\norm{B_{i':}}_{I^c} \tag{by $\calE'_3$, Cauchy-Schwartz and the fact that $g(I^c)=I^c$}\\
        &\leq C\sqrt{\frac{\log n}{n}}+12\sqrt{\kappa}
    \end{align*}
     since $i\in \tilde{I}\cap \tilde{I}'$ and $\norm{B_{i':}}_{I^c}\leq \norm{B_{i':}}\leq 2$ on the event $\calE_4'$.
     \item Arguing in the same way, we have for all $i'\neq i$ \[
      \langle A_{i':}, B_{i:}\rangle_g \leq C\sqrt{\frac{\log n}{n}}+12\sqrt{\kappa}.
     \]
\end{enumerate}
}
\revb{Since the condition on $n$ implies  $C\sqrt{\frac{\log n}{n}}\leq 12\sqrt{\kappa}$, and $\kappa\in(0,1)$, it follows that for all $i \in \tilde{I}\cap \tilde{I}' $ and $i'\neq i$ 
\begin{align*}
    \langle A_{i:}, B_{i:}\rangle_g &\geq 0.9\sqrt{1-\sigma^2}-16\sqrt{\kappa},\\
    \langle A_{i:}, B_{i':}\rangle_g   \vee\langle A_{i':}, B_{i:}\rangle_g&\leq 25\sqrt{\kappa}.
\end{align*} 
Thus, from the stated choice of $\kappa$, we have for all $i \in \tilde{I}\cap \tilde{I}' $ and $i'\neq i$ that \[
\langle A_{i:}, B_{i:}\rangle_g \geq  \langle A_{i:}, B_{i':}\rangle_g \vee  \langle A_{i':}, B_{i:}\rangle_g.
\]
In particular, this implies that for all $i \in \tilde{I}\cap \tilde{I}' $, $(AGB)_{ii}$ is row-column dominant, where $G\in \calP_n$ corresponds to $g\in \calS_n$. Hence, $\tilde{g}$ (the output of $\ppm$) will correctly match $i$ with itself. }
\revb{Then clearly $\tilde{g}$ will be such that \begin{align*}
    |\lbrace i\in [n]: \tilde{g}(i)=i \rbrace |&\geq |\tilde{I}\cap \tilde{I}'| \tag{by inclusion}\\
    &\geq n-|\tilde{I}^c|-|(\tilde{I}')^c|\tag{by union bound}\\
    &\geq \frac{n}{2}+\frac{|\lbrace i\in [n]: g(i)=i \rbrace|}{2}.\tag{by $\calE'_1$}
\end{align*}}
\end{proof}
\paragraph{Conclusion.} By Lemma \ref{lem:improve_matching}, if the initial number of fixed points is $(1-\kappa)n$ then after one iteration step the size of the set of fixed points of the new iteration is at least $(1-\kappa/2)n$ with probability greater than $1-e^{-c\kappa n}-3n^{-2}$. So after $2\log n$ iterations the set of fixed points has size at least $(1-\kappa/2^{2\log n})n>n-1$ with probability greater than \revb{$1-e^{-c\kappa n}-3n^{-2}$. Note that the high probability comes from conditioning on the event $\calE'$ so it is not necessary to take a union bound for each iteration.} 

\section{Numerical experiments}\label{sec:experiments}
In this section, we present numerical experiments to assess the performance of the \texttt{PPMGM} algorithm and compare it to the state-of-art algorithms for graph matching, under the CGW model\footnote{All the experiments were conducted using MATLAB R2021a (MathWorks Inc., Natick, MA). The code is available at \url{https://github.com/ErnestoArayaV/Graph-matching-PPMGM}.}. We divide this section in two parts. In Section \ref{sec:perf_comp} we generate CGW graphs $A,B\sim W(n,\sigma,x^*)$ for a uniformly random permutation $x^*$, and apply to $A,B$ the spectral algorithms \texttt{Grampa} \citep{Grampa}, the classic \texttt{Umeyama} \citep{Spectral_weighted_Ume}, and the convex relaxation algorithm \texttt{QPADMM}\footnote{This algorithm is refered to as $\texttt{QP-DS}$ in \citep{Grampa}. Since algorithm $\texttt{ADMM}$ is used to obtain a solution, we opt to use the name $\texttt{QPADMM}$.}, which first solves the following convex quadratic programming problem 
\begin{equation}\label{eq:exp_convex}
    \max_{X\in \calB_n}\|AX-XB\|^2_F,
\end{equation}
where $\calB_n$ is the Birkhoff polytope of doubly stochastic matrices, and then rounds the solution using the greedy method $\texttt{GMWM}$. All of the previous algorithms work in the seedless case. As a second step, we apply algorithm \texttt{PPMGM} with the initialization given by the output of \texttt{Grampa}, \texttt{Umeyama} and $\texttt{QPADMM}$. We show experimentally that by applying \texttt{PPMGM} the solution obtained in both cases improves, when measured as the overlap (defined in \eqref{eq:overlap_def}) of the output with the ground truth. We also run experiments by initializing \texttt{PPMGM} with $X^{(0)}$ randomly chosen at a certain distance of the ground truth permutation $X^*$. Specifically, we select $X^{(0)}$ uniformly at random from the set of permutation matrices that satisfy $\|X^{(0)}-X^*\|_F=\theta'\sqrt{n}$, and vary the value of $\theta'\in [0,\sqrt{2})$.

In Section \ref{sec:spar_st} we run algorithm \texttt{PPMGM} with different pairs of input matrices. We consider the Wigner correlated matrices $A,B$ and also the pairs of matrices ($A^{\operatorname{spar}_1},B^{\operatorname{spar}_1}$), ($A^{\operatorname{spar}_2},B^{\operatorname{spar}_2}$) and ($A^{\operatorname{spar}_3},B^{\operatorname{spar}_3}$), which are produced from $A,B$ by means of a sparsification procedure (detailed in Section \ref{sec:spar_st}). The main idea behind this setting is that, to the best of our knowledge, the best theoretical guarantees for exact graph matching have been obtained in \citep{MaoRud} for relatively sparse \erdos-R\'enyi graphs. The algorithm proposed in \citep{MaoRud} has two steps, the first of which is a seedless type algorithm which produces a partially correct matching, that is later refined with a second algorithm \citep[Alg.4]{MaoRud}. Their proposed algorithm \texttt{RefinedMatching} shares similarities with \texttt{PPMGM} and with algorithms \texttt{1-hop} \citep{LubSri,YuXuLin} and \texttt{2-hop} \citep{YuXuLin}. Formulated as it is, \texttt{RefinedMatching} \citep{MaoRud} (and the same is true for \texttt{2-hop} for that matter) only accepts binary edge graphs as input and also uses a threshold-based rounding approach instead of Algorithm \ref{alg:gmwm}, which might be difficult to calibrate in practice. With this we address experimentally the fact that the analysis (and algorithms) in \citep{MaoRud} does not extend automatically to a simple `binarization' of the (dense) Wigner matrices, and that specially in high noise regimes, the sparsification strategies do not perform very well. 

\subsection{Performance of \texttt{PPMGM}}\label{sec:perf_comp}
In Figure \ref{fig1-a} we plot the recovery fraction, which is defined as the overlap (see \eqref{eq:overlap_def}) between the ground truth permutation and the output of five algorithms: \texttt{Grampa}, \texttt{Umeyama}, \texttt{Grampa+PPMGM}, \texttt{Umeyama+PPMGM} and \texttt{PPMGM}. The algorithms \texttt{Grampa+PPMGM} and \texttt{Umeyama+PPMGM} use the output of \texttt{Grampa} and \texttt{Umeyama} as seeds for \texttt{PPMGM}, which is performed with $N=5$. In the algorithm \texttt{PPMGM}, we use an initial permutation $x^{(0)}\in \calS_n$ chosen uniformly at random in the set of permutations such that $\operatorname{overlap}{(x^{(0)},x^*)}=0.1$; this is referred to as `\texttt{PPMGM} rand.init'. We take $n=800$ and plot the average overlap over $25$ Monte Carlo runs. The area comprises $90\%$ of the Monte Carlo runs (leaving out the $5\%$ smaller and the $5\%$ larger). As we can see from this figure, the performance of $\ppm$ initialized with a permutation with $0.1$ overlap with the ground truth  outperforms \texttt{Grampa} and \texttt{Umeyama} (and also their refined versions, where $\ppm$ is used as a post-processing step). Overall, the $\ppm$ improves the performance of those algorithms, provided that their output has a reasonably good recovery. From Fig.\ref{fig1-a}, we see that \texttt{Grampa} and \texttt{Umeyama} fail to provide a permutation with good overlap with the ground truth for larger values of $\sigma$ (for example, at $\sigma=0.5$ both algorithms have a recovery fraction smaller than $0.1$). In Figure \ref{fig1-b} we plot the performance of the \texttt{PPMGM} algorithm for randomly chosen seeds and with different number of correctly pre-matched vertices. More specifically, we consider an initial permutation $x^{(0)}_j\in \calS_n$  (corresponding to initializations $X^{(0)}_j\in\calP_n$) for $j=1,\cdots,6$ with $\operatorname{overlap}(x^{(0)}_1,x^*)=0.04$, $\operatorname{overlap}(x^{(0)}_2,x^*)=0.0425$, $\operatorname{overlap}(x^{(0)}_3,x^*)=0.{045}$, $\operatorname{overlap}(x^{(0)}_4,x^*)=0.05$, $\operatorname{overlap}(x^{(0)}_5,x^*)=0.06$ and $\operatorname{overlap}(x^{(0)}_6,x^*)=0.1$. We call these instances $\emph{in.1,in.2},\ldots,\emph{in.6}$ respectively. Equivalently, these initializations satisfy $\|X^{(0)}_j-X^*\|_F=\theta'_j\sqrt{n}$, where $\theta'_j=\sqrt{2\big(1-\operatorname{overlap}(x^{(0)}_j,x^*)\big)}$. Each permutation $x^{(0)}_j$ is chosen uniformly at random in the subset of permutations that satisfy each overlap condition. We observe that initializing the algorithm with an overlap of $0.1$ with the ground truth permutation already produces perfect recovery in one iteration for levels of noise as high as $\sigma=0.8$. Interestingly, the variance over the Monte Carlo runs diminishes as the overlap with the ground truth increases. In Fig.\ref{fig1-b} the shaded area contains $90\%$ of the Monte Carlo runs, only for $in.4$, $in.5$ and $in.6$ (given the very high variance of the rest, we opt not to share their $90\%$ area for readability purposes). 

\begin{figure}
  \begin{subfigure}[t]{0.475\textwidth}
    \includegraphics[width=\textwidth]{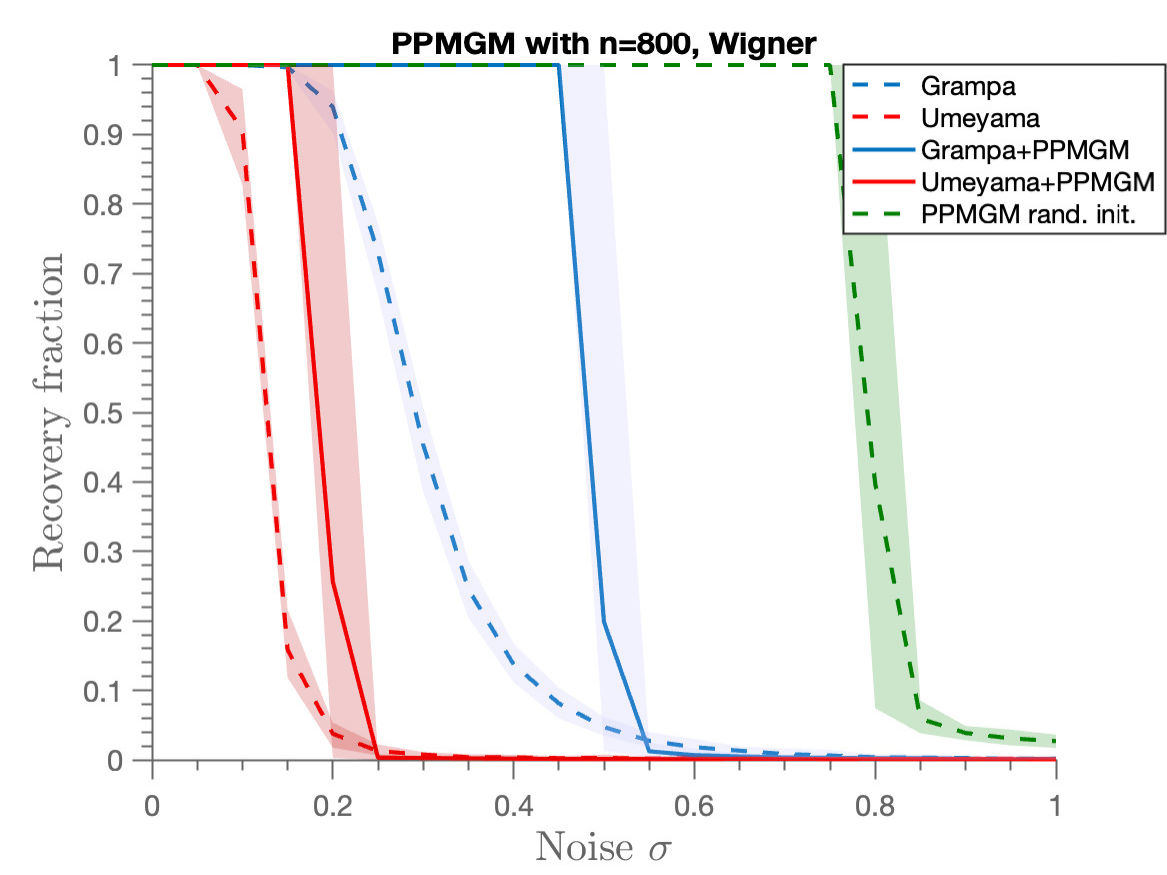}
    \caption{Performance of \texttt{PPMGM} as a refinement of Grampa and Umeyama algorithms, compared with PPM with a random initialization $x^{(0)}$, such that $\operatorname{overlap}(x^{(0)},x^*)=0.1$. The lines represent the average fraction of recovery over $25$ Monte Carlo runs. The shaded area contains $90\%$ of the Monte Carlo runs.}
    \label{fig1-a}
  \end{subfigure}\hfill
  \begin{subfigure}[t]{0.475\textwidth}
    \includegraphics[width=\textwidth]{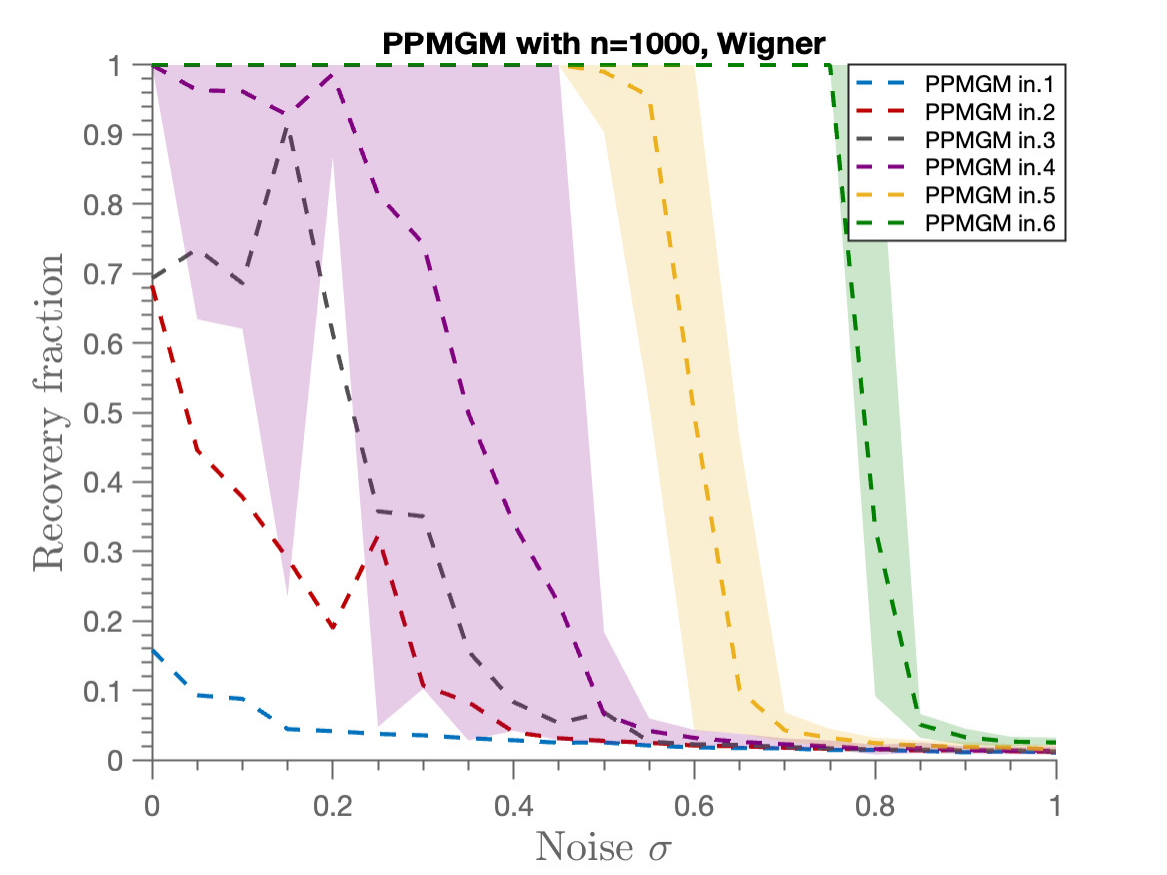}
    \caption{Performance of \texttt{PPMGM} with different initializations. Here $in.1,in.2,in.3,in.4,in.5, in.6$ correspond to respective overlaps of $0.04,0.0425,0.045,0.05,0.06$,$0.1$ of $x^{(0)}$ with the ground truth. We shade the area with $90\%$ of Monte Carlo runs for $in.4,in.5$ and $in.6$ (for the rest, the variance is too high).}
    \label{fig1-b}
  \end{subfigure}
  \caption{We plot the performance of $\ppm$ (with $N=5$)
    as a refinement (post-processing) method of seedless graph matching algorithms, and with random initializations (uniform on different Frobenius spheres ).} 
  \label{fig:perf-1}
\end{figure}

In Figure \ref{fig-convex} we illustrate the performance of $\ppm$ (with $N=5$), when is used as a refinement of the seedless algorithm $\texttt{QPADMM}$, which solves \eqref{eq:exp_convex} via the alternating direction method of multipliers (ADMM). This setting has also been considered in the numerical experiments in \citep{deg_prof, Grampa}. We plot the average performance over $25$ Monte Carlo runs of the methods $\texttt{QPADMM}$ and $\texttt{QPADMM+PPMGM}$ (its refinement), and we include the performance of $\texttt{Grampa}$ and $\texttt{Grampa+PPMGM}$ for comparison. As before, the shaded area contains $90\%$ of the Monte Carlo runs. It is clear that $\texttt{QPADMM+PPMGM}$ outperforms the rest which is a consequence of the good quality of the seed of $\texttt{QPADMM}$. The caveat is that $\texttt{QPADMM}$ takes much longer to run than $\texttt{Grampa}$. In our experiments, for $n=200$, it is $2.5$ times slower on average, although in \citep{Grampa} a larger gap is reported for $n=1000$. This shows the scalability issues of $\texttt{QPADMM}$ which is not surprising considering that general purpose convex solvers are usually much slower than first order methods.

\begin{figure}[!ht]
    \centering
    \includegraphics[width=0.47\textwidth]{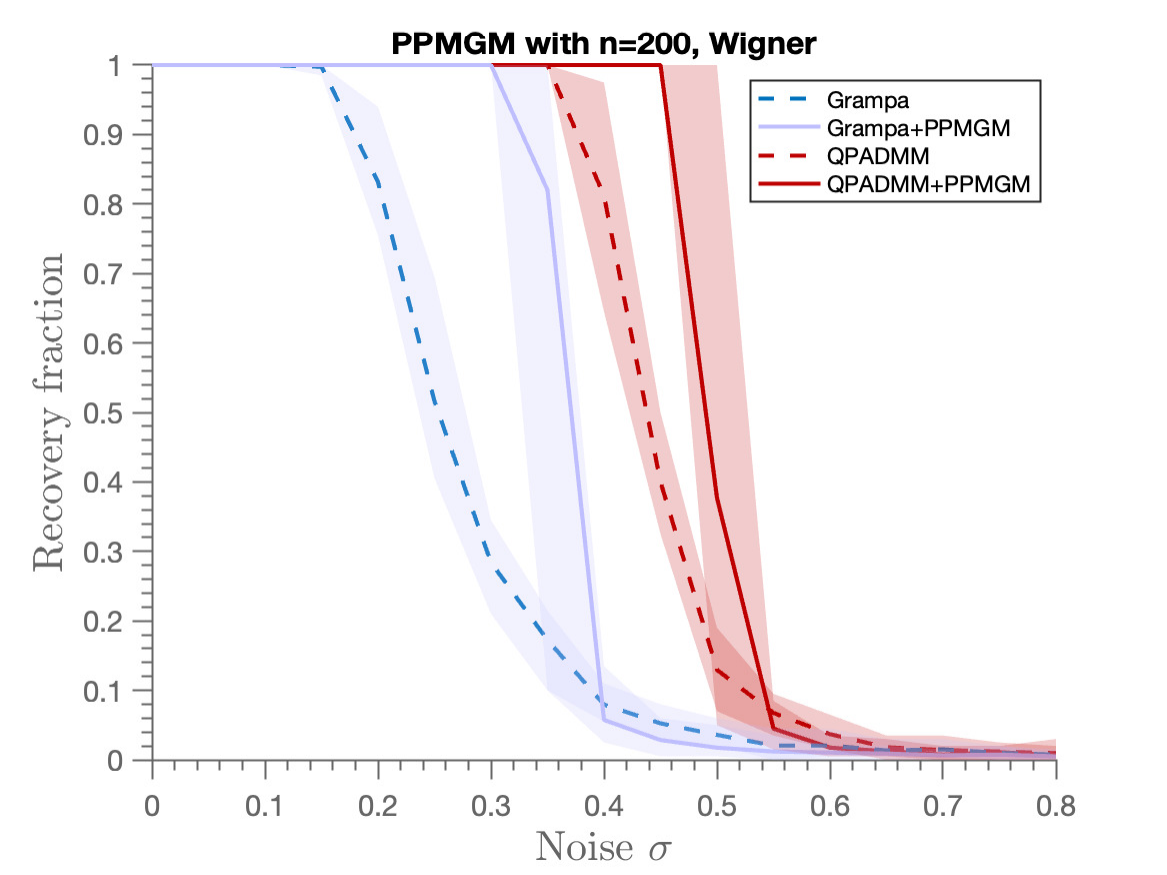}
    \caption{Performance of $\ppm$ (with $N=5$) used as a refinement (post-processing) of \texttt{QPADMM} for $n=200$ and $25$ Monte Carlo runs. We include the results of \texttt{Grampa} and its refinement for comparison purposes.}
    \label{fig-convex}
\end{figure}

\paragraph{Varying the number of iterations $N$.}
We experimentally evaluate the performance of \texttt{PPMGM} when varying the number of iterations $N$ in Algorithm \ref{alg:ppmgm}. In Figure \ref{fig:perf-2} we plot the recovery rate of \texttt{PPMGM}, initialized with $x^{(0)}$, with an overlap of $0.1$ with the ground truth. In Fig. \ref{fig2-a} we see that adding more iterations increases the performance of the algorithm for $n=500$; however the improvement is less pronounced in the higher noise regime. In other words, the number of iterations cannot make up for the fact that the initial seed is of poor quality (relative to the noise level). We use $N=1,2,4,8,30$ iterations and we observe a moderate gain between $N=8$ and $N=30$. In Fig. \ref{fig2-b} we use a matrix of size $n=1000$ and we see that the difference between using $N=1$ and $N>1$ is even less pronounced (we omit the case of $30$ iterations for readability purposes, as it is very similar to $N=8$). \revb{ This is in concordance with our main results, as the main quantities used by our algorithm are getting more concentrated as $n$ grows. In the case of one iteration, Proposition \ref{prop:diago_dom} says that the probability that the diagonal elements of the gradient term $AXB$ are the largest in their corresponding row is increasing with $n$ (we recall that we can assume that the ground truth is the identity w.l.o.g), which means that the probability of obtaining exact recovery, after the \texttt{GMWM} rounding, is increasing with $n$. This is verified experimentally here (comparing the blue curve in Fig. \ref{fig2-a} and Fig. \ref{fig2-b}). An analogous reasoning follows from Lemmas \ref{lem:nb_ngbh1_mt} and \ref{lem:nb_ngbh2_mt} in the case of multiple iterations. Ultimately, when $n$ increases, the relative performance of $\ppm$ with $N=1$ increases and there is less room for improvement using more iterations (altough the improvement is still significant).}
\begin{figure} 
  \begin{subfigure}[t]{0.475\textwidth}
    \includegraphics[width=\textwidth]{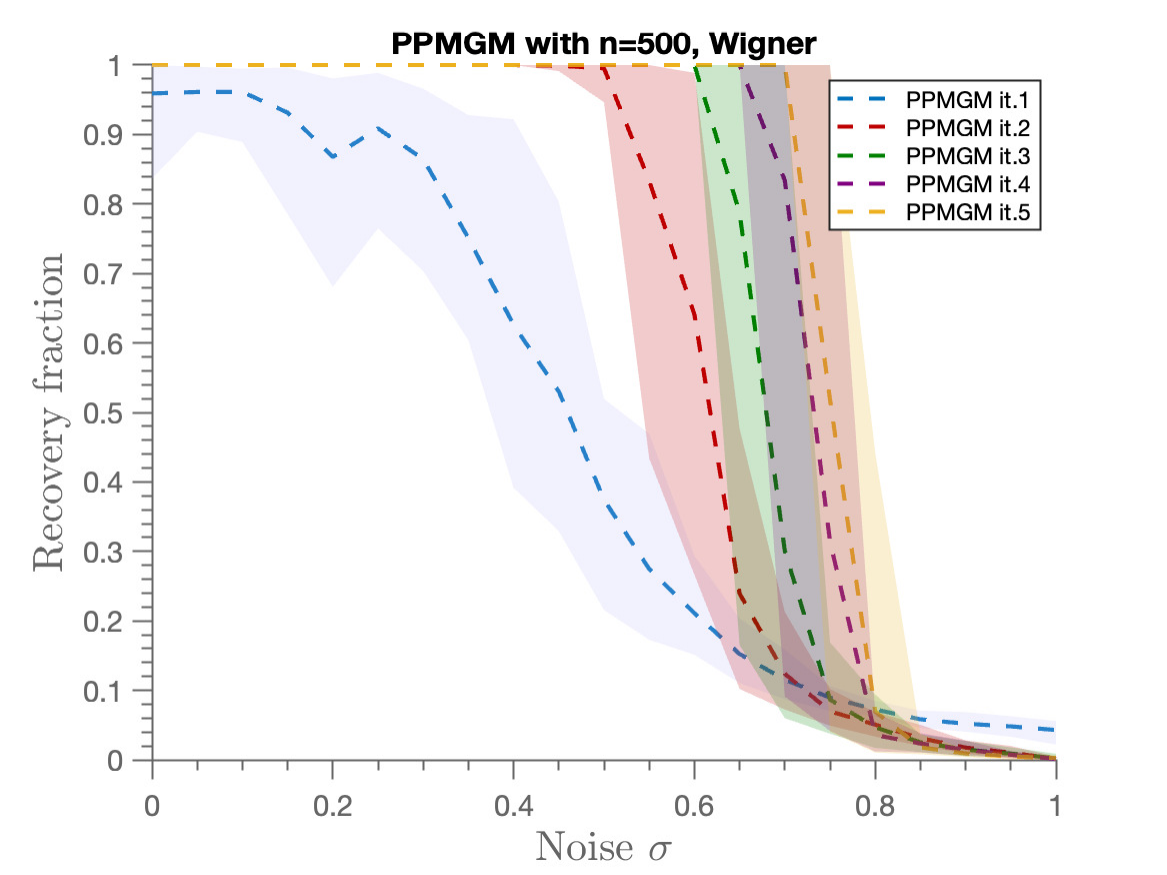}
    \caption{\texttt{PPMGM} with an initialization such that $\operatorname{overlap}(x^{(0)},x^*)=0.1$. Here $it.1,it.2,it.3,it.4,it.5$ corresponds to $1,2,4,8$ and $30$ iterations respectively. }
    \label{fig2-a}
  \end{subfigure}\hfill
  \begin{subfigure}[t]{0.475\textwidth}
    \includegraphics[width=\textwidth]{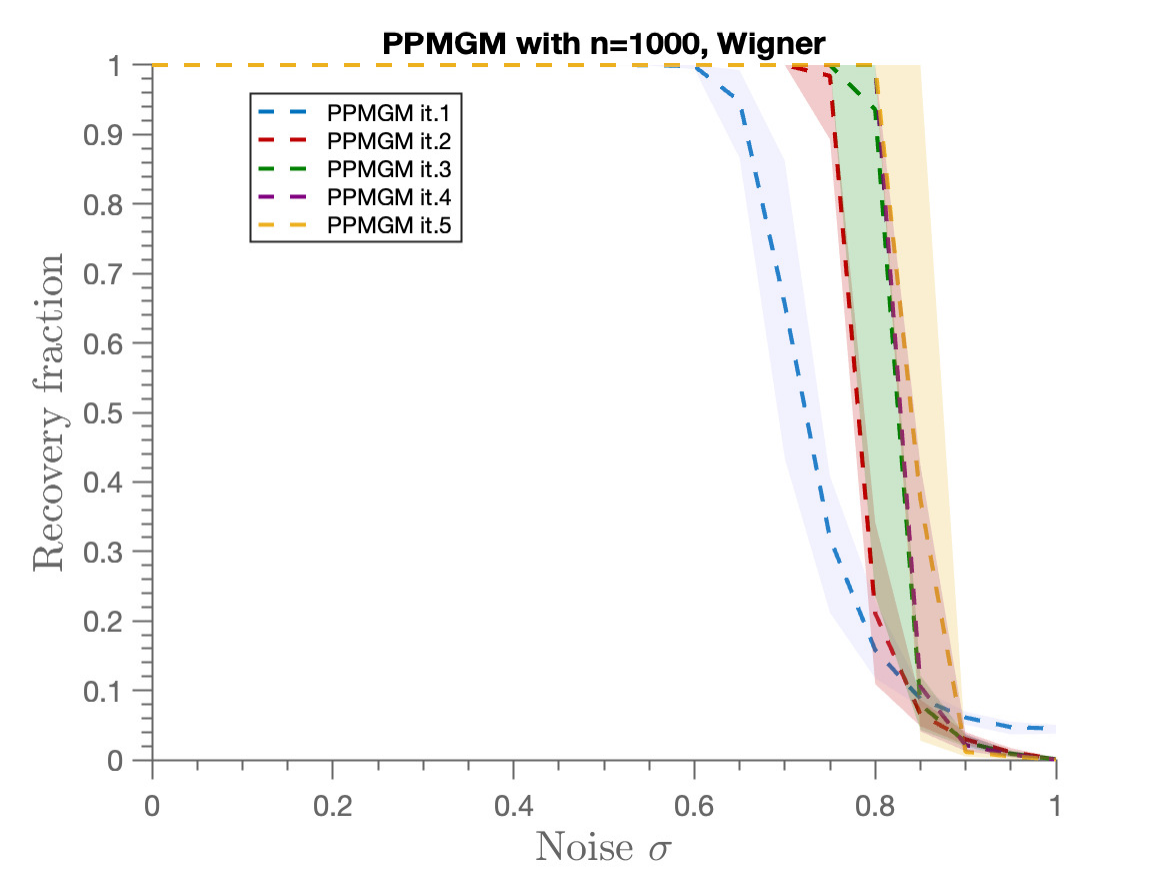}
    \caption{Here $it.1,it.2,it.3,it.4$ corresponds to $1,2,4$ and $8$ iterations respectively.}
    \label{fig2-b}
  \end{subfigure}
  \caption{Comparison of the performance of $\ppm$ with different values of $N$ (number of iterations).} 
  \label{fig:perf-2}
\end{figure}

\subsection{Sparsification strategies}\label{sec:spar_st}

Here we run \texttt{PPMGM} using different input matrices which are all transformations of the Wigner correlated matrices $A,B$. Specifically, we compare \texttt{PPMGM} (with $N=5$) with $A,B$ as input with the application of \texttt{PPMGM} to three different pairs of input matrices ($A^{\operatorname{spar}_1},B^{\operatorname{spar}_1}$), ($A^{\operatorname{spar}_2},B^{\operatorname{spar}_2}$) and ($A^{\operatorname{spar}_3},B^{\operatorname{spar}_3}$) that are defined as follows. 
\begin{align*}
    A^{\operatorname{spar}_1}_{ij}&=\mathbbm{1}_{|A_{ij}|<\tau};\enskip B^{\operatorname{spar}_1}_{ij}=\mathbbm{1}_{|B_{ij}|<\tau}, \\
    A^{\operatorname{spar}_2}_{ij}&=A_{ij}\mathbbm{1}_{|A_{ij}|<\tau};\enskip B^{\operatorname{spar}_2}_{ij}=B_{ij}\mathbbm{1}_{|B_{ij}|<\tau}, \\
    A^{\operatorname{spar}_3}_{ij}&=A_{ij}\mathbbm{1}_{|A_{ij}|\in \operatorname{top_k}(A_{i:})};\enskip B^{\operatorname{spar}_2}_{ij}=B_{ij}\mathbbm{1}_{|B_{ij}|\in \operatorname{top_k}(B_{i:})}, 
\end{align*}
where $\tau>0$ and for $k\in\mathbb{N}$ and a $n\times n$ matrix $M$,  $\operatorname{top_k}(M_{i:})$ is the set of the $k$ largest elements (breaking ties arbitrarily) of $M_{i:}$ (the $i$-th row of $M$). The choice of the parameter $\tau$ is mainly determined by the sparsity assumptions in \citep[Thm.B]{MaoRud}, \emph{i.e.}, if $G,H$ are two CER graphs to be matched with connection probability $p$ (which is equal to $qs$ in the definition \eqref{eq: ER_def}), then the assumption is that 
\begin{equation}\label{eq:sparsity_assump}
    (1+\epsilon)\frac{\log n}n\leq p\leq n^{\frac{1}{R\log\log n}-1}
\end{equation}
where $\epsilon>0$ is arbitrary and $R$ is an absolute constant. We refer the reader to \citep{MaoRud} for details. For each $p$ in the range defined by \eqref{eq:sparsity_assump} we solve the equation 
\begin{equation}\label{eq:param_tau}
    \prob(|A_{ij}|\leq \tau_p)=2\Phi(-\tau_p\sqrt n)=p
\end{equation}
where $\Phi$ is the standard Gaussian cdf (which is bijective so $\tau_p$ is well defined). In our experiments, we solve  \eqref{eq:param_tau} numerically. Notice that $A^{\operatorname{spar}_1}$ and $ B^{\operatorname{spar}_1}$ are sparse \rev{CER} graphs with a correlation that depends on $\sigma$. For the value of $k$ that defines $A^{\operatorname{spar}_3},B^{\operatorname{spar}_3}$ we choose $k=\Omega(\log n)$ or $k=\Omega(n^{o(1)})$, to maintain the sparsity degree in \eqref{eq:sparsity_assump}. In Figure \ref{fig:spar} we plot the performance comparison between the \texttt{PPMGM} without sparsification, and the different sparsification strategies. We see in Figs. \ref{figsp-a} and \ref{figsp-b} (initialized with overlap $0.5$ and $0.1$) that the use of the full information $A,B$ outperforms the sparser versions in the higher noise regimes  and for when the overlap of the initial permutation is small. On the other hand, the performance tends to be more similar for low levels of noise and moderately large number of correct initial seeds. In theory, sparsification strategies have a moderate denoising effect (and might considerably speed up computations), but this process seems to destroy important correlation information. 
\begin{figure}[!ht]
  \begin{subfigure}[t]{0.475\textwidth}
    \includegraphics[width=\textwidth]{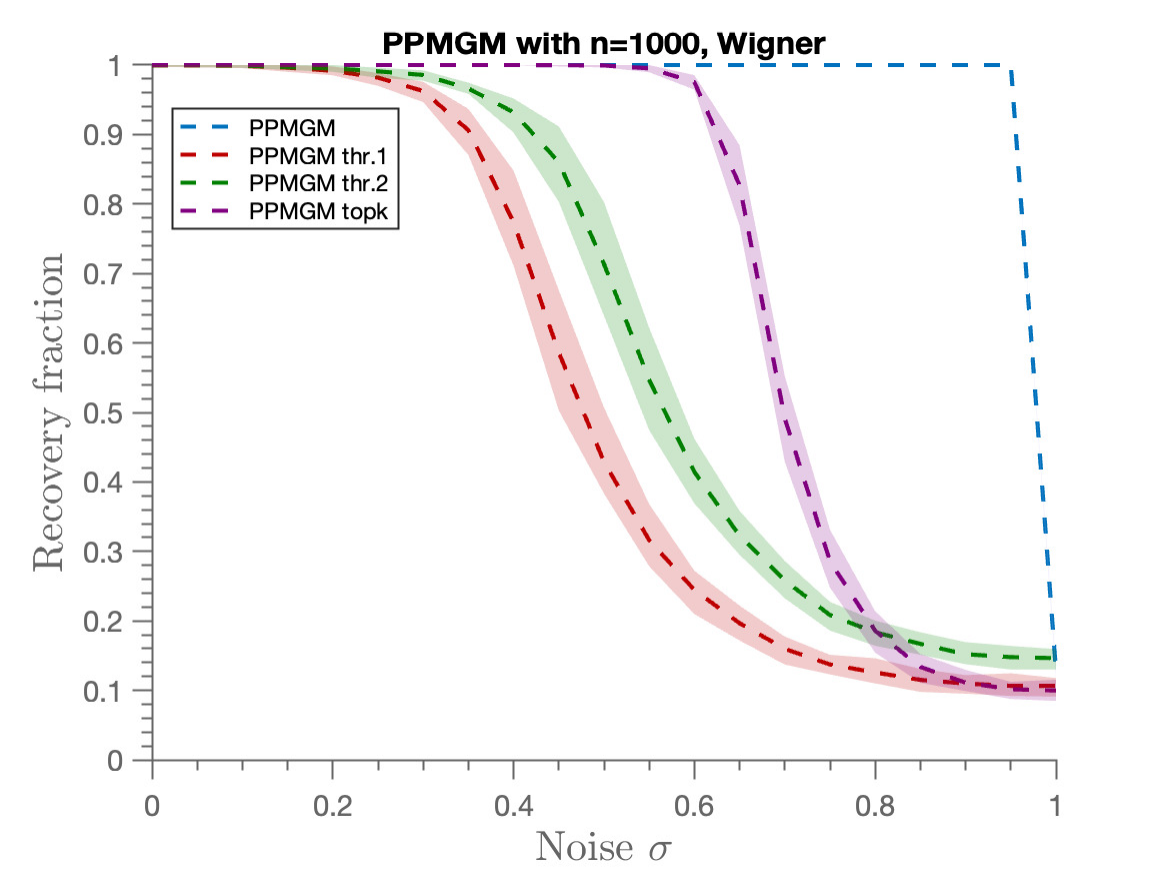}
    \caption{Initial overlap is equal to $0.5$}
    \label{figsp-a}
  \end{subfigure}\hfill
  \begin{subfigure}[t]{0.475\textwidth}
    \includegraphics[width=\textwidth]{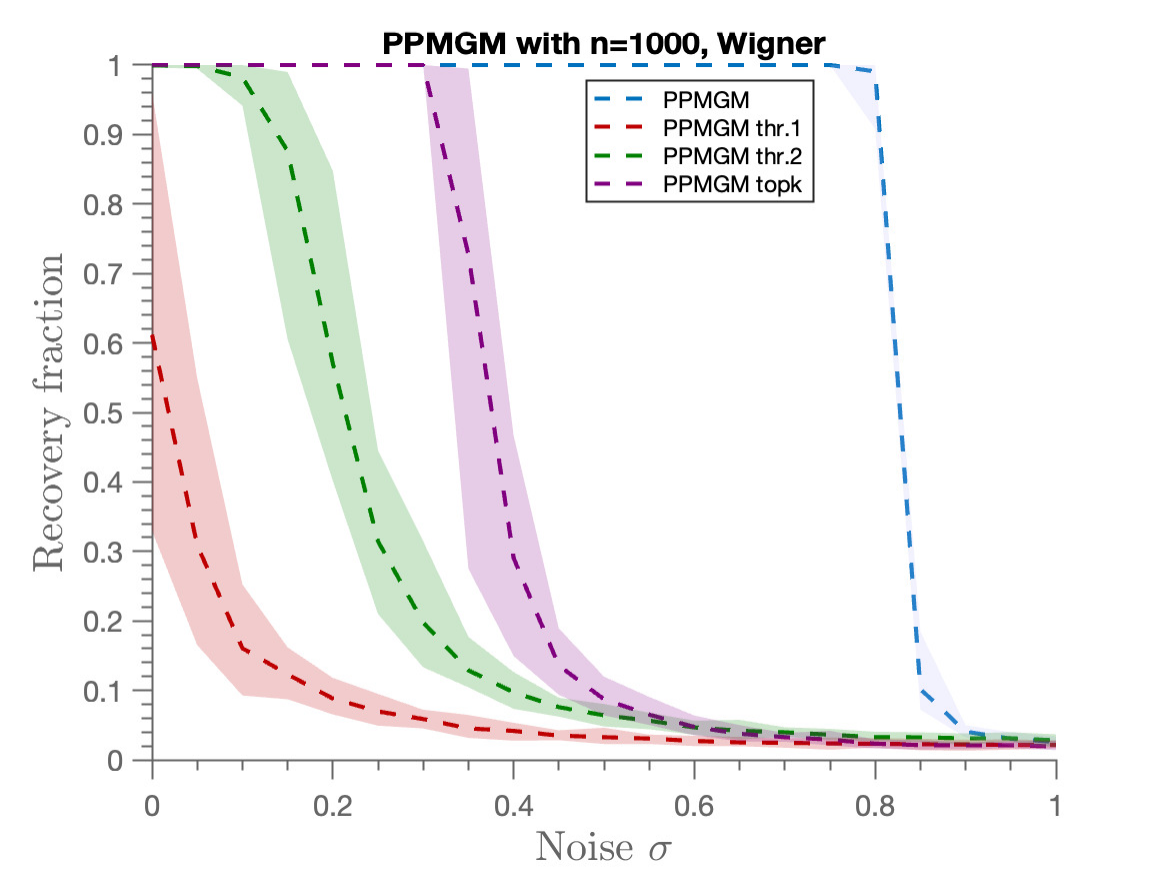}
    \caption{Initial overlap is equal to $0.1$}
    \label{figsp-b}
  \end{subfigure}
  \caption{Comparison between \texttt{PPMGM} (with $N=5$) with and without sparsification. Here $thr.1$ corresponds to the pair of matrices ($A^{\operatorname{spar}_1},B^{\operatorname{spar}_1}$), $thr.2$ corresponds to the pair  ($A^{\operatorname{spar}_2},B^{\operatorname{spar}_2}$) and top $k$ corresponds to ($A^{\operatorname{spar}_3},B^{\operatorname{spar}_3}$)} 
  \label{fig:spar}
\end{figure}

\subsubsection{Choice of the sparsification parameter $\tau$}\label{sec:tau_sel}
Solving \eqref{eq:param_tau} for $p$ in the range \eqref{eq:sparsity_assump} we obtain a range of possible values for the sparsification parameter $\tau$. To choose between them, we use a simple grid search where we evaluate the recovery rate for each sparsification parameter on graphs of size $n=1000$, and take the mean over $25$ independent Monte Carlo runs. In Fig. \ref{fig-hm}, we plot a heatmap with the results. We see that the best performing parameter in this experiment was for $\tau_5$ corresponding to a probability $p_5=51\times 10^{-3}$, although there is a moderate change between all the choices for $p$. 
\begin{figure}[!ht]
    \centering
    \includegraphics[width=0.57\textwidth]{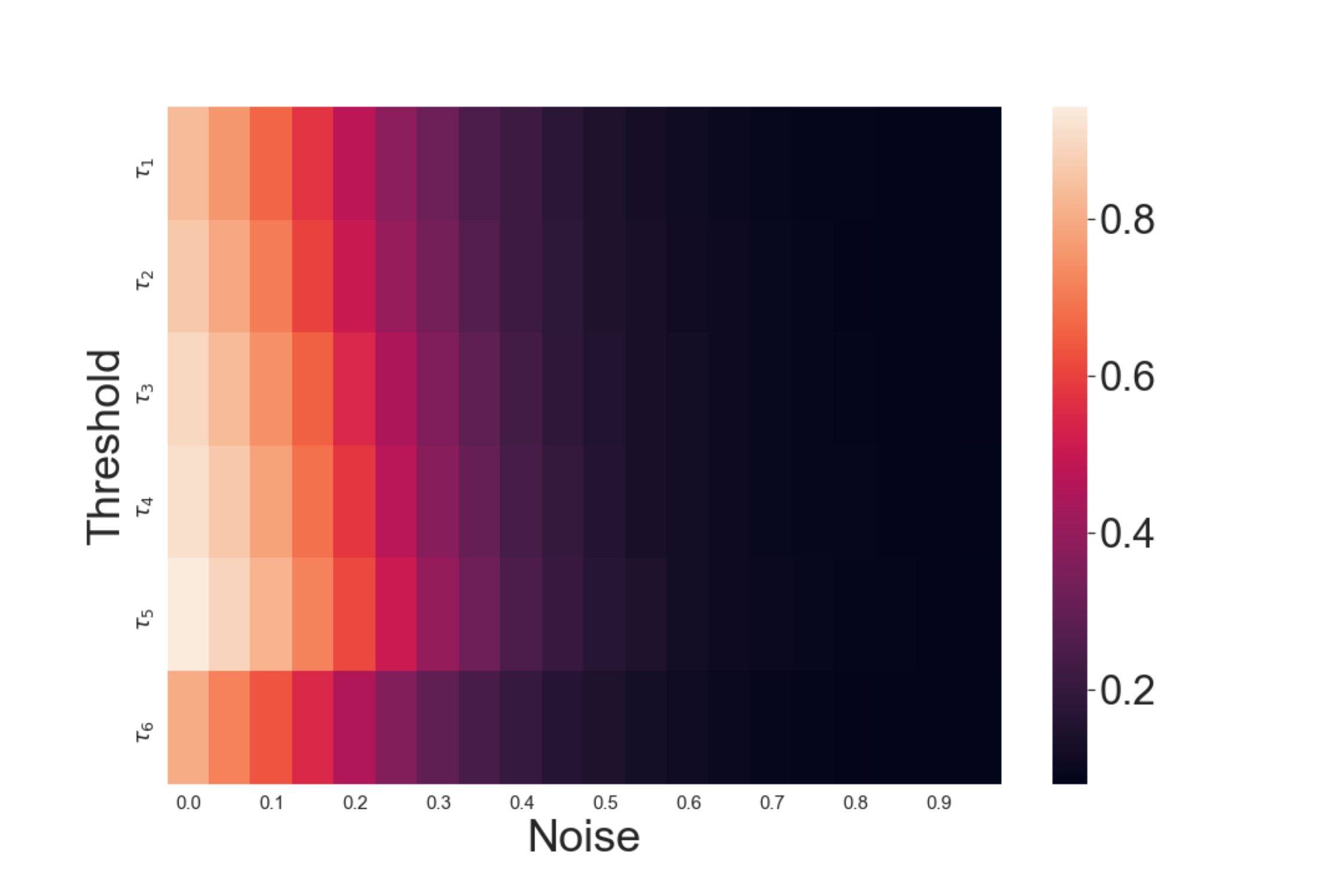}
    \caption{Heatmap for the recovery rate of \texttt{PPMGM} algorithm with input ($A^{\operatorname{spar}_1},B^{\operatorname{spar}_1}$) for different threshold values $\tau_i$($y$ axis); $i=1,\cdots,6$, and different values of $\sigma$ ($x$ axis). Here $\tau_i$ corresponds to the solution of \eqref{eq:param_tau} with $n=1000$ and $p_i$ for $i=1,2\cdots,6$ in a uniform grid between $p_1=42\times 10^{-3}$ and $p_6=54\times 10^{-3}$.}
    \label{fig-hm}
\end{figure}
\subsection{Real data}\label{sec:real_data}
We evaluate the performance of $\ppm$ for the task of matching 3D deformable objects, which is fundamental in the field of computer vision. We use the SHREC'16 dataset \citep{shrec} which contains $25$  shapes of kids (that can be regarded as perturbations of a single reference shape) in both high and low-resolution, together with the ground truth assignment between different pairs of shapes. Each image is represented by a triangulation (a triangulated mesh graph) which is converted to a weighted graph by standard image processing methods \citep{Peyre}. More specifically, each vertex corresponds to a point in the image (its triangulation) and the edge weights are given by the distance between the vertices. We use the low-resolution dataset in which each image is codified by a graph whose size varies from $8608$ to $11413$ vertices and where the average number of edges is around $0.05\%$ (high degree of sparsity). The main objective in this section is to show experimental evidence that $\ppm$ improves the quality of a matching given by a seedless algorithm, and for that, the pipeline is as follows.

\begin{enumerate}

    \item We first make the input graphs of the same size by erasing,  uniformly at random, the vertices of the larger graph.
    \item We run \texttt{Grampa} algorithm to obtain a matching.
    \item We use the output of \texttt{Grampa} as the initial point for $\ppm$.
    
\end{enumerate}

We present an example of the results in Figure \ref{fig-real} where we choose two images and find a matching between them following the above steps. We choose these two images based on the fact that the sizes of the graphs that represent them are the most similar in the whole dataset ($11265$ and $11267$ vertices). We can see visually  that the final matching obtained by $\ppm$  maps mostly similar parts of the body in the two shapes.

\begin{figure}[!ht]
    \centering
    \begin{subfigure}[t]{0.32\textwidth}
    \includegraphics[width=\textwidth]{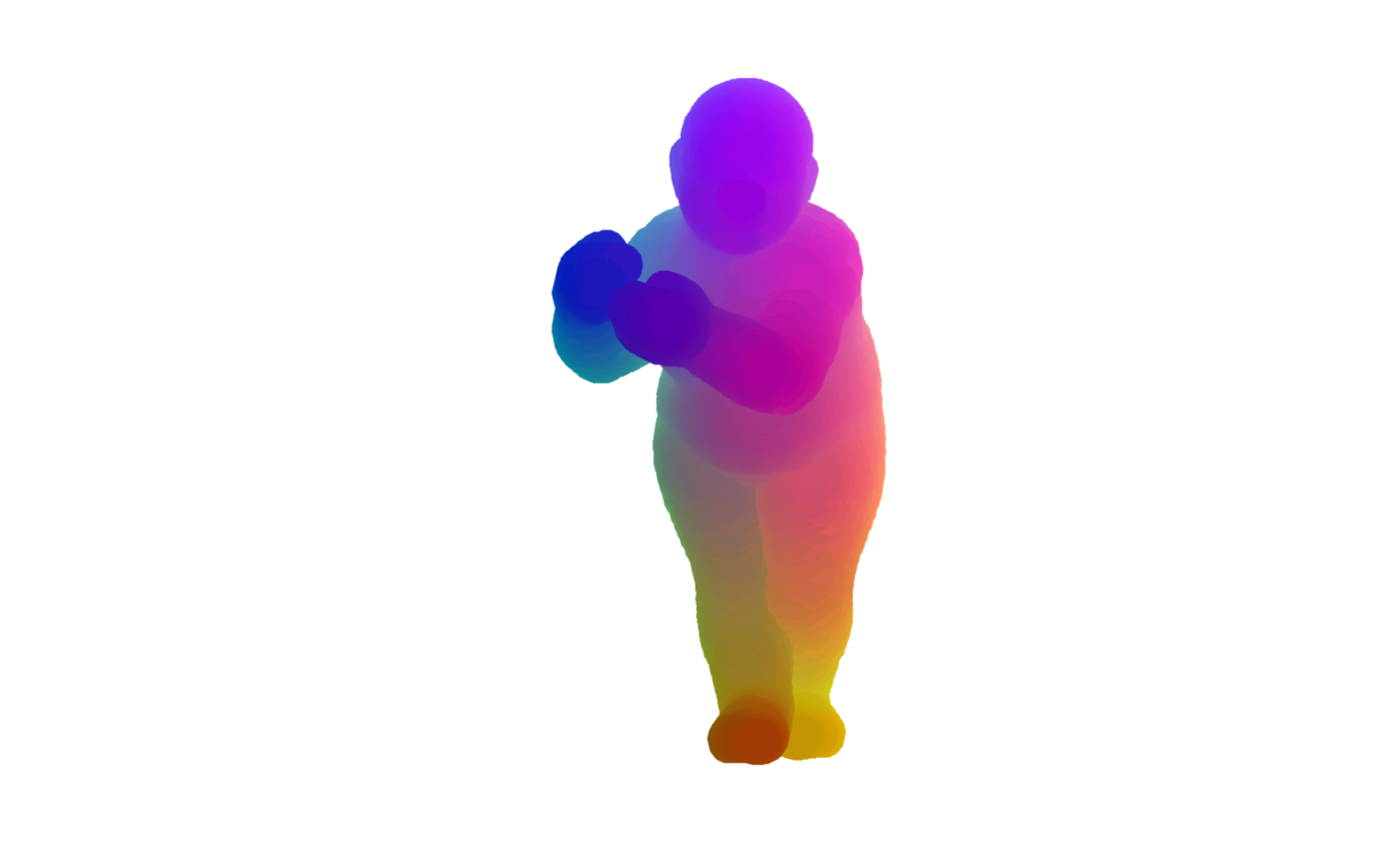}
    \caption{}
    \label{fig_real-a}
  \end{subfigure}%
  \begin{subfigure}[t]{0.32\textwidth}
    \includegraphics[width=\textwidth]{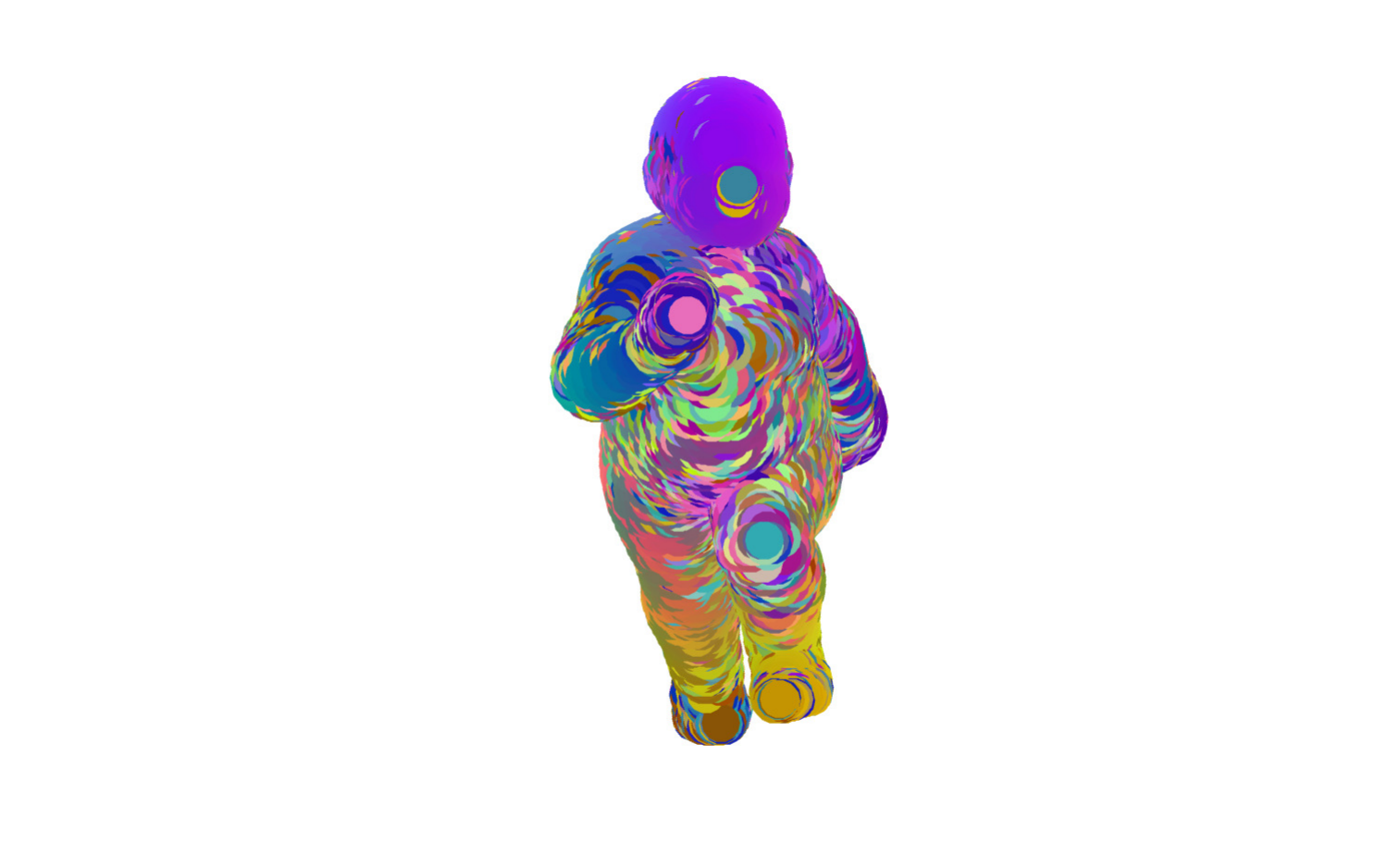}
    \caption{}
    \label{fig_real-b}
  \end{subfigure}%
  \begin{subfigure}[t]{0.32\textwidth}
    \includegraphics[width=\textwidth]{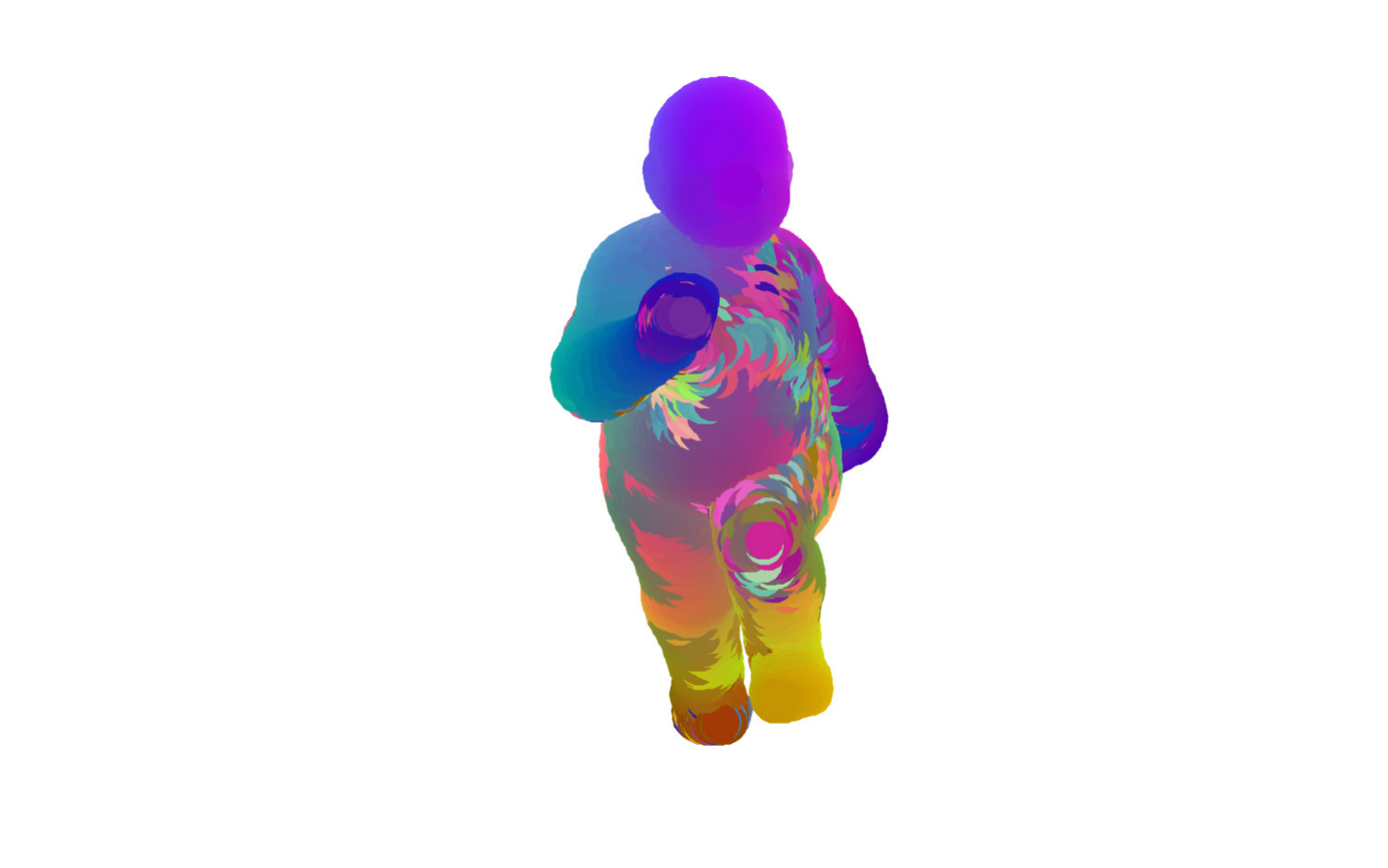}
    \caption{}
    \label{fig_real-c}
  \end{subfigure}
    \caption{Visual representation of the performance of $\ppm$ (with $100$ iterations), improving the results of \texttt{Grampa} algorithm. Fig. \ref{fig_real-a} is the reference shape (graph $A$)  where different colours are assigned to different parts of the body. Fig. \ref{fig_real-b} is the second shape (graph $B$) where the colours are assigned according to the matching given by \texttt{Grampa} algorithm; the accuracy is around $27\%$. Fig. \ref{fig_real-c} shows the results of $\ppm$, taking as the initial point the output of \texttt{Grampa} in Fig. \ref{fig_real-b}. The accuracy is improved to around $57\%$.}
    \label{fig-real}
\end{figure}
\revb{Following the experimental setting in \citep{YuXuLin}, we evaluate the performance of $\ppm$ using the Princeton benchmark protocol \citep{princeton_protocol}. Here we take all the pairs of shapes in the SHREC'16 dataset and apply the steps $1$ to $3$, of the pipeline described above, to them. Given graphs $A$ and $B$ (corresponding to two different shapes) we compute the normalized geodesic error as follows. For each node $i$ in the shape $A$, we compute $d_{B}(\hat{\pi}_{\operatorname{ppm}}(i),\pi^*(i))$, where $\hat{\pi}$ is the output of $\ppm$, $\pi^*$ is the ground truth matching between $A$ and $B$ and $d_{B}$ is the geodesic distance on $B$ (computed as the weighted shortest path distance using the triangulation representation of the image \citep{Peyre}). We then define the normalized error as $\varepsilon_{\operatorname{ppm}}(i):=d_{B}(\hat{\pi}_{\operatorname{ppm}}(i),\pi^*(i))/\sqrt{\operatorname{Area}(B)}$, where $\operatorname{Area}(B)$ is the surface area of $B$ (again computed using the triangulation representation). Then the cumulative distribution function (CDF) is defined as follows.\[\operatorname{CDF}_{\operatorname{ppm}}(\epsilon) = \sum^{n_A}_{i=1}\mathbbm{1}_{\varepsilon_{\operatorname{ppm}}(i)\leq \epsilon},\] where $n_A$ is the number of nodes of $A$.}

\revb{In Figure \ref{fig:CDF} we compare $\operatorname{CDF}_{\operatorname{ppm}}$  with $\operatorname{CDF}_{\operatorname{grampa}}$, which is defined in an analogous way by using $\varepsilon_{\operatorname{grampa}}(i):=d_{B}(\hat{\pi}_{\operatorname{grampa}}(i),\pi^*(i))/\sqrt{\operatorname{Area}(B)}$ instead of $\varepsilon_{\operatorname{ppm}}$ (here $\hat{\pi}_{\operatorname{grampa}}$ is output of $\texttt{Grampa}$). We observe that the performance increases overall for all the values of $\epsilon\in[0,1]$. In particular, the average percentage of nodes correctly matched (corresponding to $\epsilon=0$) increases from less than $15\%$ in the $\texttt{Grampa}$ baseline to more than $50\%$ with $\ppm$. Compared to the results in \citep{YuXuLin}, the performance of $\ppm$ is similar to their $1$-\texttt{hop} algorithm (although here we used the weighted adjacency matrix). The performance of $\ppm$ is slightly worse than what they reported for the $2$-\texttt{hop} algorithm. Regarding the latter, it is worth noting that, although the iterated application of their $2$-\texttt{hop} algorithm performs well in their experiments, no theoretical guarantees are provided beyond the case of one iteration. It is possible that our analysis can be extended to the case of multiple iterations of the $2-$\texttt{hop} algorithm, but this is beyond the scope of the present paper.}
\begin{figure}
    \centering    
    \includegraphics[scale=0.2]{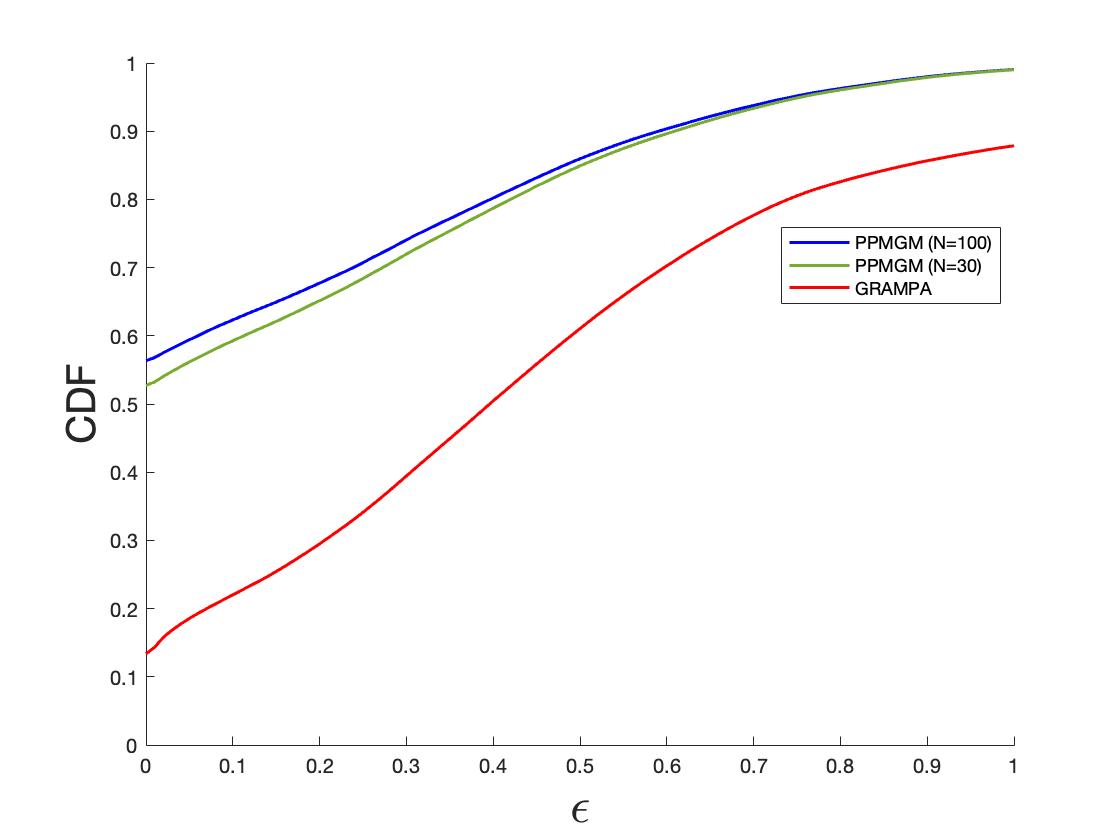}
    \caption{\revb{Average performance of $\ppm$ algorithm, with $N=30$ and $N=100$ iterations, when used to boost the performance of $\texttt{Grampa}$ algorithm. The average is over all pairs of graphs (shapes) of the SHREC'16 database.}}
    \label{fig:CDF}
\end{figure}

\section{Concluding remarks}
In this work, we analysed the performance of the projected power method (proposed in \citep{Villar}) as a seeded graph matching algorithm, in the correlated Wigner model. We proved that for a non-data dependent seed with $\mathcal{O}(\sqrt{n\log n})$ correctly pre-assigned vertices, the PPM exactly recovers the ground truth matching in one iteration. This is analogous to the state-of-the-art results for algorithms in the case of relatively sparse correlated \erdos-R\'enyi graphs. We additionally proved that the PPM can exactly recover the optimal matching in $\mathcal{O}(\log n)$ iterations for a seed that contains $\Omega\big((1-\kappa)n\big)$ correctly matched vertices, for a constant $\kappa\in (0,1)$, even if the seed can potentially be dependent on the data. For the latter result, we extended the arguments of \citep{MaoRud} from the (sparse) CER model to the (dense) CGW case, providing a uniform control on the error when the seed contains $\Omega\big((1-\kappa)n\big)$ fixed points. This provides theoretical guarantees for the use of PPM as a refinement algorithm (or a post-processing step) for other seedless graph matching methods. 

An open question is to find an efficient initialization method which outputs a permutation with order $(1-\kappa)n$ correctly matched vertices in regimes with higher $\sigma$ (say for $\sigma>1/2$). For those noise levels, spectral methods do not seem to perform well (at least in the experiments). An idea could be to adapt the results \citep{MaoRud} from the sparse CER case to the CGW case. In that paper, the authors construct for each vertex a signature containing the neighborhood information of that vertex and which is encoded as tree. Then a matching is constructed by matching those trees. It is however unclear how to adapt those results (which heavily rely on the sparsity) to the CGW setting.

\section*{Acknowledgements}
Most of the work was done while G.B. was a Ph.D. student at Inria Lille.

\bibliography{biblio}
\newpage
\appendix

\section{Proof of Proposition \ref{prop:diago_dom}}\label{app:concentration}

We divide the proof into two subsections. In Appendix \ref{app:diagdom_row_noiseless} we prove Lemma \ref{lem:tailbounds} and in  Appendix \ref{app:diagdom_row_noise} we prove part $(ii)$ of Proposition \ref{prop:diago_dom}. Before proceeding, let us introduce and recall some notation. Define $C':=AXA$ and $C'':=AXZ$, then  $C=AXB=\sqrt{1-\sigma^2}C'+\sigma C''$. Recall that for a permutation $x$, $S_X$ will denote the set of fixed points of $x$ (the set of non-zero diagonal terms of its matrix representation $X$) and we will often write $s_x=|S_X|/n=Tr(X)/n$. We will say that a real random variable $Y\sim\chi^2_K$ if it follows a central Chi-squared distribution with $K$ degrees of freedom.  

\subsection{Proof of Lemma \ref{lem:tailbounds}}\label{app:diagdom_row_noiseless}

The proof of Lemma \ref{lem:tailbounds} mainly revolves around the use of concentration inequalities for quadratic forms of Gaussian random vectors. For that, it will be useful to use the following representation of the entries of $C$. 
\begin{equation}\label{eq:Cdecom2}
C_{ij}=\langle A_{:i},XA_{:j}\rangle
\end{equation}
where we recall that $A_{:k}$ represents the $k$-th column of the matrix $A$. 
\begin{proof}[Proof of Lemma \ref{lem:tailbounds}]
~ \paragraph{High probability bound for $C_{ii}$.}
Define $\tilde{a}_i$ to be a vector in $\mathbb{R}^n$ such that 
\begin{equation*}
\tilde{a}_i(k)=\begin{cases}
    A_{ki},\text{ for } k\notin {i,x^{-1}(i)},\\
    \frac1{\sqrt{2}}A_{ii},\text{ for } k\in {i,x^{-1}(i)}.
    \end{cases}
\end{equation*} 
Using representation \eqref{eq:Cdecom2} we have \[C_{ii}=\langle \tilde{a}_i,X\tilde{a}_i\rangle + \calZ_i\]
 where
 \begin{equation*}
      \calZ_i:=\frac12A_{ii}\big(A_{x(i)i})+A_{x^{-1}(i)i}\big).
 \end{equation*}
It is easy to see that $\sqrt{n}\tilde{a}_i$ is a standard Gaussian vector. Using Lemma \ref{lem:dist_gaussian_inner} we obtain 
\[
n\langle\tilde{a}_i,X\tilde{a}_i\rangle\stackrel{d}{=} \sum^{n_1}_{i=1}\mu_ig^2_i-\sum^{n_2}_{i=1}\nu_ig'^2_i
\]
where $(\mu_i)^{n_1}_{i=1}, (-\nu_i)^{n_2}_{i=1}$, (with $\mu_i\geq 0$, $\nu_i\geq0$ and $n_1+n_2=n$) is the sequence of eigenvalues of $\frac12(X+X^T)$ and $g=(g_1,\cdots, g_{n_1})$, $g'=(g'_1,\cdots, g'_{n_2})$ are two independent sets of i.i.d standard Gaussians. Lemma \ref{lem:dist_gaussian_inner} tell us in addition that $\|\mu\|_1-\|\nu\|_1=s_xn$, $\|\mu\|_2+\|\nu\|_2\leq \sqrt{2n}$ and $\|\mu\|_\infty,\|\nu\|_\infty\leq 1$. Using Corollary \ref{cor:lau_mass}  \eqref{eq:lau_mass_lt}, we obtain 
\begin{equation}\label{eq:bound_aXa}
    \prob(n\langle\tilde{a}_i,X\tilde{a}_i\rangle\leq s_xn-2\sqrt{2nt}-2t)\leq e^{-t}
\end{equation}
for all $t\geq 0$. To obtain a concentration bound for $\calZ_i$ we will distinguish two cases. 

\noindent \textit{(a)\underline{Case $i\in S_X$.}} In this case, we have $\calZ_i=a^2_i(i)$, which implies that $C_{ii}\geq \langle\tilde{a}_i,X\tilde{a}_i\rangle$. Hence \[\prob(nC_{ii}\leq s_xn-2\sqrt{2nt}-2t)\leq 2e^{-t}.\]
Replacing $t=\overline{t}:=\frac{n}{2}(\sqrt{1+\frac{s_x}2}-1)^2$ in the previous expression, one can verify\footnote{Indeed, the inequality $(\sqrt{1+x}-1)^2\geq \frac16x^2$, follows from the inequality $x^2+(2\sqrt{6}-6)x\leq 0$, which holds for $0<x\leq 1$.} that $\overline{t}\geq \frac{n}{48}s_x^2$, for $s_x\in (0,1]$, hence 
\begin{equation*}
    \prob(C_{ii}\leq s_x/2)\leq 2e^{-\frac{s_x^2}{48}n}
\end{equation*}
which proves \eqref{eq:bounddiag} in this case. 

\noindent \textit{(b) \underline{Case $i\notin S_X$.}}  Notice that in this case, $a_i(i)$ is independent from $(a_i(x(i))+a_i(x^{-1}(i))$, hence $n\calZ_i\stackrel{d}{=}g_1g_2$, where $g_1,g_2$ are independent standard Gaussians. Using the polarization identity $g_1g_2=\frac14(g_1+g_2)^2-\frac14(g_1-g_2)^2$, we obtain  \[n\calZ_i\stackrel{d}{=}\frac12(\tilde{g}^2_1-\tilde{g}^2_2)\]
where $\tilde{g_1},\tilde{g_2}$ are independent standard Gaussians. By Corollary \ref{cor:lau_mass} we have 
\begin{equation}\label{eq:boundZ_i}
    \prob\Big(2n\calZ_i\leq -4\sqrt{t}-2t\Big)\leq 2e^{-t}.
\end{equation}
Using \eqref{eq:bound_aXa} and \eqref{eq:boundZ_i}, we get 
\begin{equation*}
    \prob(nC_{ii}\leq s_xn-2(\sqrt {2n}+1)\sqrt{t}-3 t)\leq 4e^{-t}
\end{equation*}
 or, equivalently 
\begin{equation}\label{eq:bound_Cii}
    \prob\left(C_{ii}\leq s_x-2(\sqrt2+1/\sqrt{n})\sqrt{\frac tn}-3 \frac tn\right)\leq 4e^{-t}.
\end{equation}
Replacing $t=\overline{t}:=\frac{n}{36}\big(\sqrt{d^2+6s_x}-d\big)^2$, where $d=2(\sqrt2+1/\sqrt{n})$, in the previous expression and noticing that $\overline{t}\geq \frac{1}{6}s_x^2n$, we obtain the bound
\begin{equation*}
    \prob(C_{ii}\leq s_x/2)\leq 4e^{-\frac{s^2_x}{6}n}.
\end{equation*}

\paragraph{High probability bound for $C_{ij}$, $i\neq j$.} Let us first define the vectors $\tilde{a}_i,\tilde{a}_j\in \mathbb{R}^{n}$ as \begin{equation*}
\tilde{a}_i(k):=\begin{cases}
A_{ki},\enskip \text{ for }k\notin\{j,x^{-1}(i)\},\\
0,\enskip \text{ for }k\in\{j,x^{-1}(i)\},\\
\end{cases}
\end{equation*} and 
\begin{equation*}
\tilde{a}_j(k):=\begin{cases}
A_{kj},\enskip \text{ for }k\notin\{j,x^{-1}(i)\},\\
0,\enskip \text{ for }k\in\{j,x^{-1}(i)\}.\\
\end{cases}
\end{equation*} Contrary to $a_i$ and $a_j$ which share a coordinate, the vectors $\tilde{a}_i$ and $\tilde{a}_j$ are independent. With this notation, we have the following decomposition \begin{equation*}
    C_{ij}=\langle \tilde{a}_i,X\tilde{a}_j\rangle+ A_{ji}\Big(A_{x(j)j}+A_{x^{-1}(i)i}\Big).
\end{equation*} 
For the first term, we will use the following polarization identity \begin{equation}\label{eq:polarization}
\langle \tilde{a}_i,X\tilde{a}_j\rangle= \|\frac12(\tilde{a}_i+X\tilde{a}_j)\|^2-\|\frac12(\tilde{a}_i-X\tilde{a}_j)\|^2.
\end{equation}
By the independence of $\tilde{a}_i$ and $\tilde{a}_j$, it is easy to see that $\tilde{a}_i+X\tilde{a}_j$ and $\tilde{a}_i-X\tilde{a}_j$ are independent Gaussian vectors and $\expec[\langle \tilde{a}_i,X\tilde{a}_j\rangle]=0$. Using \eqref{eq:polarization} and defining $\calZ_{ij}:=A_{ji}\Big(A_{x(j)j}+A_{x^{-1}(i)i}\Big)n$, it is easy to see that
\begin{equation}\label{eq:decomp_offdiag}
nC_{ij}\stackrel{d}{=}\sum^{n-1}_{i=1}\mu_ig^2_i-\sum^{n-1}_{i=1}\nu_ig'^2_i+\calZ_{ij}
\end{equation}
where $g_1,\cdots,g_{n-1}$ and $g'_1,\cdots,g'_{n-1}$ are two sets of independent standard Gaussian variables and $\mu_i,\nu_i\in \{\frac12,\frac34,1\}$, for $i\in[n-1]$. The sequences $(\mu_i)^{n-1}_{i=1},(\nu_i)^{n-1}_{i=1}$ will be characterised below, when we divide the analysis into two cases $x(j)=i$ and $x(j)\neq i$. We first state the following claim about $\calZ_{ij}$.
\begin{claim}\label{claim:distrZ} For $i\neq j$, we have 
\[\calZ_{ij}\stackrel{d}{=}\begin{cases}
q_{ij}(\zeta_1-\zeta_2)\enskip \text{ if }x(j)\neq i,\\
2\zeta_3 \enskip \text{ if }x(j)=i,
\end{cases}\]
where $\zeta_1,\zeta_2$ and $\zeta_3$ are independent Chi-squared random variables with one degree of freedom and 
\[q_{ij}=\begin{cases}
\sqrt\frac32 \enskip \text{ if }i\in S_X,j\notin S_X\text{ or }i\notin S_X,j\in S_X,\\
\sqrt 2 \text{ if }i,j\in S_X,\\
\frac1{\sqrt2 } \text{ if }i,j\notin S_X.
\end{cases}\]
\end{claim}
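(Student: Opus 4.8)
The plan is to split the analysis according to whether $x(j)=i$ or $x(j)\neq i$, and in the second (generic) case to determine the joint law of the three matrix entries $A_{ji}$, $A_{x(j)j}$, $A_{x^{-1}(i)i}$ that make up $\calZ_{ij}=nA_{ji}(A_{x(j)j}+A_{x^{-1}(i)i})$. First I would dispose of the case $x(j)=i$: then $x^{-1}(i)=j$, so $A_{x(j)j}=A_{ij}$ and $A_{x^{-1}(i)i}=A_{ji}=A_{ij}$, whence $\calZ_{ij}=2nA_{ij}^2$. Since $i\neq j$, the variable $\sqrt{n}A_{ij}$ is a standard Gaussian, so $nA_{ij}^2$ has a $\chi^2_1$ distribution and $\calZ_{ij}\stackrel{d}{=}2\zeta_3$. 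One also observes that $x(j)=i$ together with $i\neq j$ forces $i\notin S_X$ and $j\notin S_X$, so this case is consistent with the table for $q_{ij}$.

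Next, assume $x(j)\neq i$. The crucial step is an independence statement: the three entries $A_{ji}$, $A_{x(j)j}$, $A_{x^{-1}(i)i}$ are carried by the unordered index pairs $\{i,j\}$, $\{j,x(j)\}$ and $\{i,x^{-1}(i)\}$ (where a ``pair'' may degenerate to a singleton if the relevant vertex is a fixed point of $x$), and these three pairs are pairwise distinct. Indeed, using $i\neq j$, equality of any two of them would force $x(j)=i$, which is excluded by assumption — e.g. $\{i,j\}=\{j,x(j)\}$ forces $x(j)=i$; $\{j,x(j)\}=\{i,x^{-1}(i)\}$ forces $j=x^{-1}(i)$, i.e. $x(j)=i$; and similarly for the remaining comparison. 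Since $A$ is drawn from the GOE, entries indexed by distinct unordered pairs are independent, hence $A_{ji}$, $A_{x(j)j}$, $A_{x^{-1}(i)i}$ are mutually independent. Consequently $W:=A_{x(j)j}+A_{x^{-1}(i)i}$ is a centered Gaussian independent of $A_{ji}$, and $\Var(W)$ is the sum of the two summand variances: each summand has variance $1/n$ if the relevant vertex ($j$ for $A_{x(j)j}$, $i$ for $A_{x^{-1}(i)i}$) is not a fixed point of $x$ (an off-diagonal entry) and variance $2/n$ if it is a fixed point (a diagonal GOE entry). This yields $n\Var(W)$ equal to $2$, $3$ or $4$ according to whether neither, exactly one, or both of $i,j$ lie in $S_X$.

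To finish, write $g_1:=\sqrt{n}A_{ji}$ and $g_2:=W/\sqrt{\Var(W)}$, which are independent standard Gaussians, so that $\calZ_{ij}=\sqrt{n\Var(W)}\,g_1g_2$. Applying the polarization identity $g_1g_2=\frac14\big((g_1+g_2)^2-(g_1-g_2)^2\big)$, and using that $(g_1+g_2)/\sqrt{2}$ and $(g_1-g_2)/\sqrt{2}$ are independent standard Gaussians, shows that $g_1g_2\stackrel{d}{=}\frac12(\zeta_1-\zeta_2)$ with $\zeta_1,\zeta_2$ independent $\chi^2_1$ variables; hence $\calZ_{ij}\stackrel{d}{=}q_{ij}(\zeta_1-\zeta_2)$, with $q_{ij}$ read off from the value of $n\Var(W)$ in the appropriate case. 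This is exactly the one-line Gaussian polarization argument already used in the proof of Lemma~\ref{lem:tailbounds}.

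The only part requiring real care is the bookkeeping: the top-level split into $x(j)=i$ versus $x(j)\neq i$, and, within the latter, the four sub-cases recording the fixed-point status of $i$ and of $j$, together with the pairwise-distinctness check of the three index pairs that legitimizes the independence claim. Everything downstream of that independence statement is a routine computation of variances followed by the polarization identity, so that is where I would place the bulk of the written detail.
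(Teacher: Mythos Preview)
Your argument is correct and follows essentially the same route as the paper: the same case split on whether $x(j)=i$, then independence of the three entries $A_{ji},A_{x(j)j},A_{x^{-1}(i)i}$, a variance computation, and the polarization identity; your explicit check that the index pairs $\{i,j\},\{j,x(j)\},\{i,x^{-1}(i)\}$ are pairwise distinct is in fact more careful than the paper, which simply asserts independence. One remark worth recording: carrying your last step through gives $q_{ij}=\tfrac12\sqrt{n\Var(W)}\in\{1/\sqrt2,\ \sqrt3/2,\ 1\}$, which matches the paper's worked case $i,j\notin S_X$ but differs from the stated values $\sqrt{3/2}$ and $\sqrt2$ in the other two cases --- this appears to be a harmless slip in the claim, since only the bound $q_{ij}\le\sqrt2$ is used downstream.
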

We delay the proof of this claim until the end of this section.
From the expression \eqref{eq:decomp_offdiag}, we deduce that the vectors $g=(g_1,\cdots,g_{n-1})$, $g'=(g'_1,\cdots,g'_{n-1})$ and $\calZ_{ij}$ are independent. Hence, by Claim \ref{claim:distrZ} the following decomposition holds 
\begin{equation*}
nC_{ij}\stackrel{d}{=}\sum^{n}_{i=1}\mu_ig^2_i-\sum^{n}_{i=1}\nu_ig'^2_i
\end{equation*}
where \[\mu_{n}=
\begin{cases}
q_{ij}\text{ if }x(j)\neq i, \\
2\text{ if }x(j)=i, 
\end{cases} \text{ and } 
\nu_{n}=
\begin{cases}
q_{ij}\text{ if }x(j)\neq i, \\
0\text{ if }x(j)=i.
\end{cases}
\]
Let us define $\mu:=(\mu_1,\cdots,\mu_{n})$ and $\nu:=(\nu_1,\cdots,\nu_{n})$. We will now distinguish two cases. 

\noindent \textit{(a) \underline{Case $x(j)\neq i$.}} In this case, we can verify that one of the $\mu_1,\cdots,\mu_{n-1}$ is equal to $0$ (and the same is true for the values $\nu_1,\cdots,\nu_{n-1}$). Assume without loss of generality that $\mu_1=\nu_1=0$. Also, one of the following situations must happen for the sequence $\mu_2,\cdots,\mu_{n-1}$ (resp. $\nu_2,\cdots,\nu_{n-1}$): either $n-3$ of the elements of the sequence are equal to $\frac12$ and one is equal $1$ or $n-4$ are equal to $\frac12$ and two are equal to $\frac34$ or $n-3$ are equal to $\frac12$ and one is equal to $\frac34$. In either of those cases, the following is verified     \begin{align*}
    \|\mu\|_1-\|\nu\|_1&=0,\\
    \|\mu\|_2+\|\nu\|_2&\leq \sqrt{2n},\\
    \|\mu\|_\infty,\|\nu\|_\infty&\leq \sqrt{2},
\end{align*} 
where the first equality comes from Lemma \ref{lem:expectation}, the inequality on the norm  $\|\cdot\|_2$ comes from the fact that in the worst case $\|\mu\|_2=\|\nu\|_2\leq \sqrt{\frac{n+1}4}$. The statement about the norm $\|\cdot\|_\infty$ can be easily seen by the definition of $\mu$ and $\nu$. Using \eqref{eq:lau_mass_ut}, we obtain 
\begin{equation*}
    \prob(nC_{ij}\geq 4\sqrt{nt}+4t)\leq 2e^{-t}.
\end{equation*}
Replacing $t=\overline{t}:=\frac{n}{4}(\sqrt{1+\frac{s_x}2}-1)^2$ in the previous expression and noticing that $\overline{t}\geq \frac1{96}s_x^2n$ for $s_x\in (0,1]$ leads to the bound 
\begin{equation*}
    \prob(C_{ij}\geq s_x/2)\leq 2e^{-\frac{s_x^2}{96}n}.
\end{equation*}

\noindent \textit{(b) \underline{Case $x(j)= i$.}} In this case, we have that for the sequence $\mu_1,\cdots,\mu_{n-1}$ (resp. $\nu_1,\cdots,\nu_{n-1}$): either $n-2$ of the elements of the sequence are equal to $\frac12$ and one is equal $1$ or $n-3$ are equal to $\frac12$ and two are equal to $\frac34$. In either case, the following holds
\begin{align*}
    \|\mu\|_1-\|\nu\|_1&=2,\\
    \|\mu\|_2+\|\nu\|_2&\leq 2\sqrt{n},\\
    \|\mu\|_\infty,\|\nu\|_\infty&\leq 2.
\end{align*}
Here, the inequalities for the norms $\|\cdot\|_1,\|\cdot\|_{\infty}$ follow directly from the definition of $\mu$ and $\nu$, and the inequality for $\|\cdot\|_2$ follows by the fact that, in the worst case, $\|\mu\|_2+\|\nu\|_2=\sqrt{\frac{n+6}4}+\sqrt{\frac{n+2}4}$.
Using \eqref{eq:lau_mass_ut}, we get 
\begin{equation*}
    \prob(nC_{ij}\geq 2+4\sqrt{nt}+4t)\leq 2e^{-t}.
\end{equation*}
Replacing $t=\overline{t}:=\frac{n}{4}(\sqrt{1+\frac{s_x}2-\frac2n}-1)^2$ in the previous expression and noticing that $\overline{t}\geq \frac1{20}s_x^2n$ for $s_x\in (10/n,1]$ we get 
\begin{equation*}
    \prob(C_{ij}\geq s_x/2)\leq 2e^{-\frac{s_x^2}{20}+4/n}\leq 3e^{-\frac{s_x^2}{20}},
\end{equation*}
where we used that $n\geq 10$.

\end{proof}
\begin{proof}[Proof of Claim \ref{claim:distrZ}]
Observe that when $x(j)=i$ (or equivalently $x^{-1}(i)=j$) we have $\calZ_{ij}=2nA^2_{ij}$. Given that $i\neq j$ by assumption, it holds $A^2_{ij}\sim \calN (0,\frac1n)$, which implies that $\calZ_{ij}\stackrel{d}{=}2\zeta_3$ for $\zeta_3\sim \chi^2_1$. In the case $x(j)\neq i$, let us define \begin{equation*}
    \psi_1:=\sqrt{n}A_{ij},\enskip\psi_2:=\sqrt{n}A_{jx(j)},\enskip\psi_3:=\sqrt{n}A_{ix^{-1}(i)},
\end{equation*} which are all independent Gaussians random variables. Moreover, $\psi_1\sim \calN(0,1)$ and 
\[\psi_2+\psi_3 \sim 
\begin{cases} \calN(0,2)\text{ if }i,j\notin S_X,\\
\calN(0,3) \text{ if }i\in S_X,j\notin S_X\text{ or }i\notin S_X,j\in S_X,\\
\calN(0,4)\text { if } i,j\in S_X.
\end{cases}\]
Consider the case $i,j\notin S_X$. In this case, it holds 
\begin{align*}
    \calZ_{ij}=\sqrt{2}\psi_1\Big(\frac{\psi_2+\psi_3}{\sqrt{2}}\Big)
    =\frac{1}{\sqrt{2}}{\Big(\frac{\psi_1}{\sqrt 2}+\frac{\psi_2+\psi_3}{2}\Big)}^2-\frac{1}{\sqrt{2}}{\Big(\frac{\psi_1}{\sqrt 2}-\frac{\psi_2+\psi_3}{2}\Big)}^2.
\end{align*}
Notice that $\frac{\psi_1}{\sqrt 2}+\frac{\psi_2+\psi_3}{2}$ and $\frac{\psi_1}{\sqrt 2}-\frac{\psi_2+\psi_3}{2}$ are independent standard normal random variables, hence $\calZ_{ij}\stackrel{d}{=}\frac1{\sqrt{2}}(\zeta_1-\zeta_2)$, where $\zeta_1$ and $\zeta_2$ are independent $\chi^2_1$ random variables. The proof for the other cases is analogous. 
\end{proof}

\subsection{Proof of Proposition \ref{prop:diago_dom} part $(ii)$}\label{app:diagdom_row_noise} 
%
%
Now we consider the case where $\sigma\neq 0$. It is easy to see that here the analysis of the noiseless case still applies (up to re-scaling by $\sqrt{1-\sigma^2}$) for the matrix $C'=AXA$. 
We can proceed in an analogous way for the matrix $C''=AXZ$ which will complete the analysis (recalling that $C=\sqrt{1-\sigma^2}C'+\sigma C''$). 

Before we proceed with the proof, we explain how the tail analysis of entries of $C'$ in Prop.\ref{prop:diago_dom} part $(i)$ helps us with the tail analysis of $C''$. Observe that for each $i,j\in[n]$ we have 
\begin{equation*}
   C''_{ij}=\sum_{k,k'}A_{ik}X_{k,k'}Z_{k',j}=\sum^n_{k=1}A_{ik}Z_{x(k)j}= \langle A_{:i},XZ_{:j}\rangle.
\end{equation*}
The term $C''_{ij}$, for all $i,j\in[n]$, can be controlled similarly to the term $C'_{i'j'}$ (when $i'\neq j'$). Indeed, we have the following 
\begin{lemma}\label{lem:tailbound_noise}
For $t\geq 0$ we have 
\begin{equation*}
    \prob(C''_{ij}\leq -4\sqrt{nt}-2t\big)=\prob(C''_{ij}\geq 4\sqrt{nt}+2t\big)\leq 2e^{-t}.
\end{equation*}
Consequently, 
\begin{equation*}
    \prob(C''_{ij}\geq s_x/2)\leq 2e^{-\frac{s_x^2}{96}n}.
\end{equation*}
\end{lemma}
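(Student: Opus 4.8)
The plan is to mirror, with one genuine simplification, the off-diagonal analysis of $C'_{ij}$ carried out in the proof of Lemma \ref{lem:tailbounds}. The statement breaks into three pieces: the equality of the two one-sided tail probabilities, the tail bound itself (which, since $C''_{ij}$ is of order $n^{-1/2}$, should be read as a bound on $nC''_{ij}$), and the ``consequently'' claim, which will drop out of the tail bound by exactly the same substitution for $t$ as in Lemma \ref{lem:tailbounds}. For the symmetry of the two tails, I would use that $Z \stackrel{d}{=} A$ is a centered Gaussian matrix \emph{independent} of $A$, so $-Z \stackrel{d}{=} Z$ and $-Z$ is still independent of $A$; hence $nC'' = nAXZ \stackrel{d}{=} nAX(-Z) = -nC''$, which gives $\prob(nC''_{ij} \le -4\sqrt{nt}-2t) = \prob(nC''_{ij} \ge 4\sqrt{nt}+2t)$ for every $i,j$, so it suffices to control the upper tail.

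For the tail bound I would write $nC''_{ij} = \langle a, X z\rangle$ with $a := \sqrt{n}\,A_{:i}$ and $z := \sqrt{n}\,Z_{:j}$; the key point is that $a$ and $z$ are now fully independent (different matrices), each standard Gaussian except for one variance-$2$ coordinate ($A_{ii}$, resp.\ $Z_{jj}$). First I would peel off from $\langle a, Xz\rangle$ the $O(1)$ terms involving these two anomalous coordinates together with the coordinates where $a$ and $Xz$ can be correlated — namely indices $i$ and $x(j)$, which coincide precisely when $i = x(j)$ — collecting them into a scalar remainder $\calZ_{ij}$ whose law is a signed sum of at most two independent $\chi^2_1$'s with $O(1)$ coefficients, identified by a short computation paralleling Claim \ref{claim:distrZ}. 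For the leftover quadratic form $\langle \tilde a, X\tilde z\rangle$, whose two vectors are independent with unit-variance coordinates, I would apply the polarization identity $\langle \tilde a, X\tilde z\rangle = \|\tfrac12(\tilde a + X\tilde z)\|^2 - \|\tfrac12(\tilde a - X\tilde z)\|^2$ together with Lemma \ref{lem:dist_gaussian_inner} to obtain
\[
nC''_{ij} \stackrel{d}{=} \sum_{k} \mu_k g_k^2 - \sum_{k} \nu_k (g'_k)^2
\]
with $g, g'$ independent standard Gaussian vectors, $\|\mu\|_1 = \|\nu\|_1$ (recording $\expec[C''_{ij}]=0$, immediate from $A \perp Z$), $\|\mu\|_2 + \|\nu\|_2 \le 2\sqrt{n}$, and $\|\mu\|_\infty, \|\nu\|_\infty \le 2$. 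The two-sided Laurent–Massart inequality (Corollary \ref{cor:lau_mass}, as used in \eqref{eq:lau_mass_ut}) then yields $\prob(nC''_{ij} \ge 4\sqrt{nt}+2t) \le 2e^{-t}$ for all $t \ge 0$.

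To finish with the $s_x$-version, I would set $t = \bar t := n(\sqrt{1+s_x/4}-1)^2$, so that $4\sqrt{n\bar t}+2\bar t = n s_x/2$, and invoke the elementary bound $(\sqrt{1+u}-1)^2 \ge u^2/6$ for $u \in (0,1]$ to get $\bar t \ge n s_x^2/96$; hence $\prob(C''_{ij} \ge s_x/2) = \prob(nC''_{ij} \ge n s_x/2) \le 2e^{-\bar t} \le 2e^{-n s_x^2/96}$, as claimed.

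The only real work — and it is routine rather than hard — is the bookkeeping of which of the $\mu_k,\nu_k$ deviate from the generic value because of the variance-$2$ diagonal entries and the possible collision $i = x(j)$, so that the norm bounds on $\mu,\nu$ come out exactly as stated; this is the same kind of case split already performed for $C'_{ij}$ in Lemma \ref{lem:tailbounds}. A shorter alternative would be to condition on $A$, after which $nC''_{ij}\mid A \sim \calN\!\big(0,\, n(\|A_{i:}\|^2 + A_{i,x^{-1}(j)}^2)\big)$, and then combine a Gaussian tail with a $\chi^2$ concentration bound for $\|A_{i:}\|^2$ from Corollary \ref{cor:lau_mass}; this is quicker but produces a sum of two exponentials and a slightly worse constant, which is why I would present the polarization argument so that the constant $1/96$ matches the noiseless case.
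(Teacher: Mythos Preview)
Your plan is correct and uses the same ingredients as the paper --- polarization followed by Corollary~\ref{cor:lau_mass} --- but you carry over from the $C'_{ij}$ analysis a complication that is absent here. Since $A$ and $Z$ are independent matrices, the vectors $A_{:i}$ and $XZ_{:j}$ are \emph{fully} independent: there are no ``coordinates where $a$ and $Xz$ can be correlated'', and nothing needs to be peeled off into a remainder $\calZ_{ij}$. The paper simply polarizes directly, setting $h_1=\tfrac12(A_{:i}+XZ_{:j})$ and $h_2=\tfrac12(A_{:i}-XZ_{:j})$; since at every coordinate $k$ the two summands $A_{ki}$ and $Z_{x(k)j}$ are independent Gaussians, one obtains $nC''_{ij}\stackrel{d}{=}\sum_k\mu_k g_k^2-\sum_k\nu_k (g_k')^2$ with $\mu_k=\nu_k$ and all coefficients in $\{1/2,1/\sqrt2,1\}$ (the larger values appearing only at $k=i$ and $k=x^{-1}(j)$, where one of the two factors has variance~$2$). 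This gives $\|\mu\|_1=\|\nu\|_1$, $\|\mu\|_2+\|\nu\|_2\le2\sqrt n$ and $\|\mu\|_\infty\le1$ directly, without any separate remainder. Your route reaches the same endpoint with extra bookkeeping; note also that your stated bound $\|\mu\|_\infty\le2$ is inconsistent with the threshold $4\sqrt{nt}+2t$ you then claim --- with $\|\mu\|_\infty\le2$, Corollary~\ref{cor:lau_mass} only yields $4\sqrt{nt}+4t$ --- and the deferred case split, once carried out, will show the coefficients are at most~$1$, which repairs this. The symmetry argument via $-Z\stackrel{d}{=}Z$ and the substitution $\bar t=n(\sqrt{1+s_x/4}-1)^2$ for the ``Consequently'' line are both fine.
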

\begin{proof}
We define $h_1:=\frac12(A_{:i}+XZ_{:j})$ and $h_2:=\frac12(A_{:i}-XZ_{:j})$. It is easy to see that $h_1$ and $h_2$ are two i.i.d Gaussian vectors of dimension $n$. By the polarization identity, we have 
\begin{align*}
   n \langle A_{:i},XZ_{:j}\rangle=n(\|h_1\|^2-\|h_2\|^2)
    \stackrel{d}{=}\sum^n_{i=1}\mu_ig^2_i-\sum^n_{i=1}\nu_ig'^2_i
\end{align*}
where $g=(g_1,\cdots,g_n)$  and $g'=(g'_1,\cdots,g'_n)$  are independent standard Gaussian vectors and the vectors $\mu=(\mu_1,\cdots,\mu_n),\nu=(\nu_1,\cdots,\nu_n)$ have positive entries that satisfy, for all $i\in [n]$, $\mu_i,\nu_i\in \{\frac1{\sqrt2},\sqrt{\frac34},1\}$. For $\mu_i$ (and the same is true for $\nu_i$) the following two cases can happen: either $n-1$ of its entries are $1/\sqrt{2}$ and one entry takes the value $1$ (when $i=j$) or $n-2$ of its entries are $1/\sqrt{2}$ and two entries take the value $\sqrt{3/4}$ (when $i\neq j$). In any of those cases, one can readily see that 
\[\|\mu\|_1=\|\nu\|_1,\enskip \|\mu\|_2+\|\nu\|_2\leq \sqrt{n},\enskip \|\mu\|_\infty,\|\nu\|_\infty\leq 1.\]
Using Corollary \ref{cor:lau_mass} we obtain 
\begin{align*}
    \prob\big(n(\|h_1\|^2-\|h_2\|^2)\geq 4\sqrt{nt}+2t\big)&\leq 2e^{-t},\\
    \prob\big(n(\|h_1\|^2-\|h_2\|^2)\leq -4\sqrt{nt}-2t\big)&\leq 2e^{-t}.
\end{align*}
Arguing as in the proof of Proposition \ref{prop:diago_dom} part $(i)$ we obtain the bound
\begin{equation*}
    \prob(C''_{ij}\geq s_x/2)\leq 2e^{-\frac{s_x^2}{96}n}.
\end{equation*}
\end{proof}
Now we introduce some definitions that will be used in the proof. We define $s_{\sigma,x}:=\frac12\sqrt{1-\sigma^2}s_x$, and for $\delta>0$, $i,j\in[n]$, we define the following events \[\mathcal{E}^i_\delta:=\{\sqrt{1-\sigma^2}C'_{ii}\leq s_{\sigma,x}+\delta \}\cup \{\sigma C''_{ii}\leq -\delta\},\]
\[\mathcal{E}^{ij}:=\{\sqrt{1-\sigma^2}C'_{ij}\geq s_{\sigma,x}/2 \}\cup \{\sigma C''_{ij}\geq s_{\sigma,x}/2 \}\enskip\text{, for }i\neq j.\]
One can easily verify that $\{C_{ii}\leq s_{\sigma,x}\}\subset \mathcal{E}^i_{\delta}$, hence it suffices to control the probability of $ \mathcal{E}^i_{\delta}$. For that we use the union bound and the already established bounds in Lemmas \ref{lem:tailbounds} and \ref{lem:tailbound_noise}. To attack the off-diagonal case, we observe that the following holds $\{C_{ij}\geq s_{\sigma,x}\}\subset \mathcal{E}^{ij}$. The following lemma allows us to bound the probability of the events $\mathcal{E}^i_\delta$ and $\mathcal{E}^{ij}$.
\begin{lemma}\label{lem:probaevents}
Let $\delta$ be such that $0\leq \delta\leq \frac{s_x}2\sqrt{1-\sigma^2}$.  
Then for $i,j\in[n]$ with $i\neq j$ have the following bounds 
\begin{align}\label{eq:eventdiag}
    \prob(\mathcal{E}^i_\delta)&\leq 4e^{-\frac{1}{96}(\frac{s_x}2-\frac\delta{\sqrt{1-\sigma^2}})^2n}+2e^{-\frac{1}{96}(\frac{\delta}{\sigma})^2n} \\ \label{eq:eventoffdiag}
    \prob(\mathcal{E}^{ij})& \leq 4e^{-\frac1{384} s_x^2(\frac{1-\sigma^2}{\sigma^2}\wedge 1)n}.
\end{align}
 In particular, we have
\begin{equation}\label{eq:eventdiag2}
\prob(\mathcal{E}^i_{\delta_{\sigma,x}})\leq 6e^{-\frac1{384} s_x^2(\frac{1-\sigma^2}{1+2\sigma\sqrt{1-\sigma^2}})n}
\end{equation}
where $\delta_{\sigma,x}=\frac{\sigma\sqrt{1-\sigma^2}}{\sigma+\sqrt{1-\sigma^2}}\frac{s_x}2$.
\end{lemma}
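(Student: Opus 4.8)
The plan is to apply a union bound to each of $\mathcal{E}^i_\delta$ and $\mathcal{E}^{ij}$ that separates a ``signal'' contribution, governed by $C'=AXA$, from a ``noise'' contribution, governed by $C''=AXZ$ (recall $C=\sqrt{1-\sigma^2}\,C'+\sigma\,C''$ and $s_{\sigma,x}=\tfrac12\sqrt{1-\sigma^2}\,s_x$), to bound each contribution by feeding it into the Gaussian quadratic-form tail estimates already established for Lemma~\ref{lem:tailbounds} and Lemma~\ref{lem:tailbound_noise}, but evaluated at a general threshold rather than at $s_x/2$, and finally to optimise over $\delta$ in order to deduce \eqref{eq:eventdiag2}.

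\textbf{The event $\mathcal{E}^i_\delta$.} Dividing the two defining events by $\sqrt{1-\sigma^2}$ and by $\sigma$ rewrites them as $\{C'_{ii}\le \tfrac{s_x}{2}+\tfrac{\delta}{\sqrt{1-\sigma^2}}\}$ and $\{C''_{ii}\le -\tfrac{\delta}{\sigma}\}$, so by a union bound it suffices to bound these two probabilities. The hypothesis $\delta\le\tfrac{s_x}{2}\sqrt{1-\sigma^2}$ is exactly what makes $\Delta:=\tfrac{s_x}{2}-\tfrac{\delta}{\sqrt{1-\sigma^2}}$ lie in $[0,\tfrac12]$, and $\tfrac{s_x}{2}+\tfrac{\delta}{\sqrt{1-\sigma^2}}=\expec[C'_{ii}]-\Delta$ up to the $O(1/n)$ shift of the mean; choosing $t$ in \eqref{eq:bound_aXa} (for $i\in S_X$, where $C'_{ii}\ge\langle\tilde a_i,X\tilde a_i\rangle$) and in \eqref{eq:bound_Cii} (for $i\notin S_X$) so that the displayed deviation equals $\Delta$, and using $(\sqrt{1+x}-1)^2\ge x^2/6$ on $(0,1]$ exactly as in the proof of Lemma~\ref{lem:tailbounds}, yields $\prob(C'_{ii}\le\tfrac{s_x}{2}+\tfrac{\delta}{\sqrt{1-\sigma^2}})\le 4e^{-\frac1{96}\Delta^2 n}$. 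The same optimisation applied to Lemma~\ref{lem:tailbound_noise} with deviation $\delta/\sigma$ gives $\prob(C''_{ii}\le-\tfrac{\delta}{\sigma})\le 2e^{-\frac1{96}(\delta/\sigma)^2 n}$, and adding the two proves \eqref{eq:eventdiag}.

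\textbf{The event $\mathcal{E}^{ij}$ and the choice of $\delta$.} Here $s_{\sigma,x}/2=\tfrac14\sqrt{1-\sigma^2}\,s_x$, so the union bound reduces matters to $\prob(C'_{ij}\ge\tfrac{s_x}{4})$ and $\prob(C''_{ij}\ge\tfrac{\sqrt{1-\sigma^2}}{4\sigma}\,s_x)$. The first is the off-diagonal estimate from the proof of Lemma~\ref{lem:tailbounds} (both sub-cases $x(j)=i$ and $x(j)\ne i$), which at threshold $s_x/4$ gives $2e^{-\frac1{384}s_x^2 n}$; the second is Lemma~\ref{lem:tailbound_noise} at threshold $\tfrac{\sqrt{1-\sigma^2}}{4\sigma}s_x$, giving $2e^{-\frac1{384}\frac{1-\sigma^2}{\sigma^2}s_x^2 n}$ when this threshold is $\le 2$ and, when it is larger, still $2e^{-\frac1{384}s_x^2 n}$ because $(\sqrt{1+\frac{\sqrt{1-\sigma^2}}{8\sigma}s_x}-1)^2\ge(\sqrt{1+\frac{s_x}{8}}-1)^2\ge\frac{s_x^2}{384}$. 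Bounding both exponents below by $\tfrac1{384}\bigl(\tfrac{1-\sigma^2}{\sigma^2}\wedge1\bigr)s_x^2 n$ and adding proves \eqref{eq:eventoffdiag}. Finally, the two exponents in \eqref{eq:eventdiag} are respectively decreasing and increasing in $\delta$, so one balances them by solving $\tfrac{s_x}{2}-\tfrac{\delta}{\sqrt{1-\sigma^2}}=\tfrac{\delta}{\sigma}$, whose solution is exactly $\delta_{\sigma,x}=\frac{\sigma\sqrt{1-\sigma^2}}{\sigma+\sqrt{1-\sigma^2}}\cdot\frac{s_x}{2}$, which satisfies $\delta_{\sigma,x}\le\tfrac{s_x}{2}\sqrt{1-\sigma^2}$ (the hypothesis) and $\delta_{\sigma,x}/\sigma\le s_x/2\le\tfrac12$ (so the quadratic inversion was legitimate for the noise term); since $(\sigma+\sqrt{1-\sigma^2})^2=1+2\sigma\sqrt{1-\sigma^2}$, the common value $\delta_{\sigma,x}/\sigma$ squared equals $\frac{1-\sigma^2}{1+2\sigma\sqrt{1-\sigma^2}}\cdot\frac{s_x^2}{4}$, and substituting into \eqref{eq:eventdiag} with $4+2=6$ gives \eqref{eq:eventdiag2}.

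\textbf{Main obstacle.} The genuine work is re-running the Laurent--Massart optimisations of Lemma~\ref{lem:tailbounds} and Lemma~\ref{lem:tailbound_noise} at a general threshold instead of $s_x/2$, keeping the constants clean and uniform across the cases $i\in S_X$/$i\notin S_X$ and $x(j)=i$/$x(j)\ne i$, and making sure that the quantity to which $(\sqrt{1+x}-1)^2\ge x^2/6$ is applied stays in $[0,1]$ — this is automatic for the signal terms (where it is controlled by $\Delta\le\tfrac12$ or $s_x/4$), whereas for the diagonal noise term it is exactly what forces one to keep $\delta/\sigma$ bounded, which is the reason the statement is only ever needed (and cleanest) at $\delta=\delta_{\sigma,x}$, and for the off-diagonal noise term it is accommodated by the $\wedge1$ in \eqref{eq:eventoffdiag}. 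Everything else — the union bounds, the single-variable optimisation in $\delta$, and the algebraic identity $(\sigma+\sqrt{1-\sigma^2})^2=1+2\sigma\sqrt{1-\sigma^2}$ — is routine bookkeeping.
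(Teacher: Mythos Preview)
Your proof is correct and follows the same approach as the paper: split each event via the decomposition $C=\sqrt{1-\sigma^2}C'+\sigma C''$, re-run the Laurent--Massart inversions from Lemmas~\ref{lem:tailbounds} and~\ref{lem:tailbound_noise} at the shifted thresholds, and then equate the two exponents in \eqref{eq:eventdiag} to obtain $\delta_{\sigma,x}$ and \eqref{eq:eventdiag2}. In fact you are more careful than the paper on two points the paper glosses over: you distinguish the cases $i\in S_X$ versus $i\notin S_X$ for the diagonal signal term (the paper cites only \eqref{eq:bound_Cii}), and you explicitly track the range of validity of $(\sqrt{1+x}-1)^2\ge x^2/6$ for the noise terms, which is precisely what justifies the $\wedge 1$ in \eqref{eq:eventoffdiag} and why \eqref{eq:eventdiag} is only invoked at $\delta=\delta_{\sigma,x}$.
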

\begin{proof}
Using \eqref{eq:bound_Cii}, we have that 
\begin{equation*}
 \prob\Big(\sqrt{1-\sigma^2}C_{ii}\leq \sqrt{1-\sigma^2}\big(s_x-2(\sqrt2+1/\sqrt{n})\sqrt{\frac tn}-3 \frac tn\big)\Big)\leq 4e^{-t}.
\end{equation*}
Replacing $t=\overline{t}:=\frac{n}{36}\big(\sqrt{d^2+{6s_x-\frac{12\delta}{\sqrt{1-\sigma^2}}}}-d\big)^2$ in the previous expression, where $d=2(\sqrt2+1/\sqrt{n})$, and observing that 
$\overline{t}\geq \frac{1}{6}(\frac{s_x}{2}-\frac{\delta}{\sqrt{1-\sigma^2}})^2$, which is valid for $0\leq \delta\leq \frac{s_x}2\sqrt{1-\sigma^2}$, we obtain 
\begin{equation*}
    \prob\Big(\sqrt{1-\sigma^2}C'_{ii}\leq s_{\sigma,x}+\delta \Big)\leq 4e^{-\frac{1}{6}(\frac{s_x}{2}-\frac{\delta}{\sqrt{1-\sigma^2}})^2n}.
\end{equation*}
Using this and Lemma \ref{lem:tailbound_noise} we have
\begin{align}
    \prob(\mathcal{E}^i_\delta)&\leq \prob(\sqrt{1-\sigma^2}C'_{ii}\leq s_{\sigma,x}+\delta)+\prob(\sigma C''_{ii}\leq -\delta)\nonumber\\ 
    &\leq 4e^{-\frac{1}{6}(\frac{s_x}2-\frac\delta{\sqrt{1-\sigma^2}})^2n}+2e^{-\frac{1}{96}(\frac{\delta}{\sigma})^2n}.\nonumber
\end{align}
Similarly, to prove \eqref{eq:eventoffdiag} we verify that
\begin{align*}
    \prob(\mathcal{E}^{ij})&\leq \prob(C'_{ij}\geq \frac{s_x}4)+\prob(C''_{ij}\geq \frac{\sqrt{1-\sigma^2}}{\sigma}\frac{s_x}4)\\
    &\leq 2e^{-\frac{1}{384}s_x^2n}+2e^{-\frac1{384} s_x^2(\frac{1-\sigma^2}{\sigma^2})n}\\
    &\leq 4e^{-\frac1{384} s_x^2(\frac{1-\sigma^2}{\sigma^2}\wedge 1)n}. 
\end{align*}
To prove \eqref{eq:eventdiag2} it suffices to use  \eqref{eq:eventdiag} with the choice of $\delta=\delta_{\sigma,x}=\frac{\sigma\sqrt{1-\sigma^2}}{\sigma+\sqrt{1-\sigma^2}}\frac{s_x}2$.
\end{proof}
With this we prove the diagonal dominance for each fixed row of $C$.

\begin{proof}[Proof of Prop. \ref{prop:diago_dom} part $(ii)$]
Define $\tilde{\mathcal{E}}_j:=\{C_{ii}\leq s_{\sigma,x}\}\cup\{C_{ij}\geq s_{\sigma,x}\}$, which clearly satisfies $\{C_{ii}\leq C_{ij}\}\subset\tilde{\mathcal{E}}_j$. Then by the union bound, 
\begin{align*}
    \prob(\cup_{j\neq i}\tilde{\mathcal{E}}_j)&\leq \prob(C_{ii}\leq s_{\sigma,x})+\sum_{j\neq i} \prob(C_{ij}\geq s_{\sigma,x})\\
    &\leq \prob(\mathcal{E}^i_{\delta_{\sigma,x}})+\sum_{j\neq i}\prob(\mathcal{E}^{ij})\\
   &\leq 6e^{-\frac1{384} s_x^2(\frac{1-\sigma^2}{1+2\sigma\sqrt{1-\sigma^2}})n}+4(n-1)e^{-\frac1{384} s_x^2(\frac{1-\sigma^2}{\sigma^2}\wedge 1)n}\\
   &\leq 5ne^{-\frac1{384} s_x^2(\frac{1-\sigma^2}{1+2\sigma\sqrt{1-\sigma^2}})n}
\end{align*}
where in the third inequality we used Lemma \ref{lem:probaevents}, and in the last inequality we used the fact that $\frac{1-\sigma^2}{\sigma^2}\wedge 1\geq \frac{1-\sigma^2}{1+2\sigma\sqrt{1-\sigma^2}}$.
\end{proof}

\section{Proof of Lemma \ref{lem:not_rc_dom}}\label{app:lem_not_rc_dom}
The proof of Lemma \ref{lem:not_rc_dom} uses elements of the proof of Proposition \ref{prop:diago_dom}. The interested reader is invited to read the proof of Proposition \ref{prop:diago_dom} first. 
\begin{proof}[Proof of Lemma \ref{lem:not_rc_dom}]
It will be useful to first generalize our notation. For that, we denote \[C_{ij,x}=(AXB)_{ij}, \enskip C'_{ij,x}=(AXA)_{ij},\enskip C''_{ij,x}=(AXZ)_{ij}
\] for $x\in \calS_n$, and \[\mathcal{E}^{ij}_{x^{-1}}:=\{\sqrt{1-\sigma^2}C'_{ij,x^{-1}}\geq s_{\sigma,x}/2 \}\cup \{\sigma C''_{ij,x^{-1}}\geq s_{\sigma,x}/2 \}\]
where $x^{-1}$ is the inverse permutation of $x$.
The fact that $\prob(C_{ii,x}<C_{ij,x})\leq8e^{-c(\sigma)s_x^2n} $ follows directly from the bound for $\tilde{\mathcal{E}}_j$ derived in the proof of Proposition \ref{prop:diago_dom} part $(ii)$. To prove $\prob(C_{ii,x}<C_{ji,x})$ notice that $C'_{ji,x}=C'_{ij,x^{-1}}$ and that $C''_{ji,x}\stackrel{d}{=}C''_{ij,x^{-1}}$. On the other hand, notice that $s_x=s_{x^{-1}}$ (hence $s_{\sigma,x}=s_{\sigma,x^{-1}}$). Arguing as in Lemma \ref{lem:probaevents} it is easy to see that 
\[\prob(C_{ii,x}<C_{ji,x})\leq 8e^{-c(\sigma)s_x^2n}.\]
The bound on $\prob(\exists j,\text{ s.t }C_{ij,x}\vee C_{ji,x}>C_{ii,x})$ then follows directly by the union bound. 
\end{proof}

\section{Proofs of Lemmas \ref{lem:diagdom_LAP} and \ref{lem:overlap_event}} \label{app:proofs_lem_diagdom}
\begin{proof}[Proof of Lemma \ref{lem:diagdom_LAP}]
By assumption $C$ is diagonally dominant, which implies that $\exists i_1$ such that $C_{i_1i_1}=\max_{i,j}C_{ij}$ (in other words, if the largest entry of $C$ is in the $i_1$-th row, then it has to be $C_{i_1i_1}$, otherwise it would contradict the diagonal dominance of $C$). In the first step of $\operatorname{GMWM}$ we select $C_{i_1i_1}$, assign $\pi(i_1)=i_1$ and erase the $i_1$-th row and column of $C$. By erasing the $i_1$-th row and column of $C$ we obtain a matrix which is itself diagonally dominant. So by iterating this argument we see $\exists$ $i_1,\cdots,i_n\subset[n]$ such that $\pi(i_k)=i_k$, for all $k$, so $\pi$ has to be the identical permutation. This proves that if $C$ is diagonally dominant, then $\Pi=\operatorname{Id}$. By using the contrareciprocal, \eqref{eq:probneqId} follows.
\end{proof}

\begin{proof}[Proof of Lemma \ref{lem:overlap_event}]

We argue by contradiction. Assume that for some $1\leq k\leq r$, we have $\pi(i_k)\neq i_k$ (and $\pi^{-1}(i_k)\neq i_k$). This means that at some some step $j$ the algorithm selects either $C^{(j)}_{i_k\pi(i_k)}$ or $C^{(j)}_{\pi^{-1}(i_k)\pi(i_k)}$ as the largest entry, but this contradicts the row-column dominance of $i_k$. This proves that that if there exists a set of indices $I_r\subset[n]$ of size $r$ such that for all $i\in I_r$, $C_{ii}$ is row-column dominant, then that set is selected by the algorithm, which implies that $\pi(i)=i$ for $i\in I_r$, thus $\operatorname{overlap}(\pi,\operatorname{id})\geq r$. \eqref{eq:overlap_event} follows by the contrareciprocal. 

\end{proof}

\section{Additional technical lemmas}\label{sec:additionalLemmas}

Here we gather some technical lemmas used throughout the paper. 

\subsection{General concentration inequalities}

The following lemma corresponds to \citep[Lemma 1.1]{LauMass} and controls the tails of the weighted sums of squares of Gaussian random variables.
\begin{lemma}[Laurent-Massart bound]\label{lem:lau_mass}
Let $X_1,\cdots,X_n$ be i.i.d standard Gaussian random variables. Let $\mu=(\mu_1,\cdots,\mu_n)$ be a vector with non-negative entries and define $\zeta=\sum^n_{i=1}\mu_i(X^2_i-1)$. Then it holds for all $t\geq 0$ that
\begin{align*}
    \prob(\zeta\geq 2\|\mu\|_2\sqrt{t}+2\|\mu\|_\infty t)\leq e^{-t}\\
    \prob(\zeta\leq -2\|\mu\|_2\sqrt{t})\leq e^{-t}
\end{align*}
 %
\end{lemma}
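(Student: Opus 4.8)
The plan is to prove the bound by the standard Chernoff (exponential Markov) argument applied to the chi-square moment generating function, treating the two tails separately. The first step is to record that for a single standard Gaussian $X$ and $\lambda<1/2$ one has $\expec[e^{\lambda(X^2-1)}]=e^{-\lambda}(1-2\lambda)^{-1/2}$, so the cumulant generating function of $X^2-1$ is $\psi(\lambda):=\log\expec[e^{\lambda(X^2-1)}]=-\lambda-\tfrac12\log(1-2\lambda)$. By independence of the $X_i$, $\log\expec[e^{\lambda\zeta}]=\sum_{i=1}^n\psi(\lambda\mu_i)$, valid for every $\lambda$ with $2\lambda\norm{\mu}_\infty<1$, and for every $\lambda\le 0$ with no restriction.

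For the upper tail I would use the elementary estimate $\psi(u)\le\frac{u^2}{1-2u}$ on $0\le u<1/2$, which follows from $\log\frac{1}{1-v}\le v+\frac{v^2}{2(1-v)}$ for $0\le v<1$ (verified by comparing the two sides at $v=0$ and their derivatives) applied with $v=2u$. Since $0\le\mu_i\le\norm{\mu}_\infty$, this gives $\log\expec[e^{\lambda\zeta}]\le\sum_i\frac{\lambda^2\mu_i^2}{1-2\lambda\mu_i}\le\frac{\lambda^2\norm{\mu}_2^2}{1-2\lambda\norm{\mu}_\infty}$, and hence $\prob(\zeta\ge x)\le\exp\!\big(-\lambda x+\tfrac{\lambda^2\norm{\mu}_2^2}{1-2\lambda\norm{\mu}_\infty}\big)$ for all $0<\lambda<\tfrac{1}{2\norm{\mu}_\infty}$. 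It then suffices to substitute the explicit choice $\lambda=\frac{\sqrt t}{\norm{\mu}_2+2\norm{\mu}_\infty\sqrt t}$, which lies in the admissible range, and to check by a short algebraic simplification that, for $x=2\norm{\mu}_2\sqrt t+2\norm{\mu}_\infty t$, the exponent collapses to exactly $-t$; this yields the first inequality.

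For the lower tail I would instead, for $\lambda\ge 0$, bound $\psi(-\lambda\mu_i)=\lambda\mu_i-\tfrac12\log(1+2\lambda\mu_i)\le\lambda^2\mu_i^2$, using $\log(1+v)\ge v-\tfrac{v^2}{2}$ for $v\ge0$; summing over $i$ gives $\log\expec[e^{-\lambda\zeta}]\le\lambda^2\norm{\mu}_2^2$, so $\prob(\zeta\le-x)\le\exp(-\lambda x+\lambda^2\norm{\mu}_2^2)$. Optimizing at $\lambda=x/(2\norm{\mu}_2^2)$ gives $\prob(\zeta\le-x)\le\exp(-x^2/(4\norm{\mu}_2^2))$, and taking $x=2\norm{\mu}_2\sqrt t$ produces the second inequality. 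There is no genuine obstacle here — this is a textbook computation — the only point requiring care is to use these sharp cumulant estimates rather than cruder ones, since otherwise one does not recover the clean constant ``$2$'' in the stated thresholds; the asymmetry between the two tails (the extra $\norm{\mu}_\infty t$ term appearing only in the upper tail) reflects exactly the asymmetry between the bounds available for $\psi(u)$ when $u>0$ versus $u<0$.
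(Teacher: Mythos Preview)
Your argument is correct and is precisely the standard Chernoff computation that Laurent and Massart use in their original paper. Note that the present paper does not actually prove this lemma: it simply cites \citep[Lemma~1.1]{LauMass}, so there is no ``paper's own proof'' to compare against beyond the original reference, which your proposal faithfully reproduces.
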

An immediate corollary now follows.
\begin{corollary}\label{cor:lau_mass}
Let $X_1,\cdots,X_{n_1}$ and $Y_1,\cdots,Y_{n_2}$ be two independent sets of i.i.d standard Gaussian random variables. Let $\mu=(\mu_1,\cdots,\mu_{n_1})$ and $\nu=(\nu_1,\cdots,\nu_{n_2})$ be two vectors with non-negative entries. Define $\zeta=\sum^{n_1}_{i=1}\mu_iX^2_i$ and $\xi=\sum^{n_2}_{i=1}\nu_iY^2_i$. Then it holds for $t\geq 0$ that
\begin{align}\label{eq:lau_mass_ut}
    \prob\big(\zeta-\xi\geq \|\mu\|_1-\|\nu\|_1+2(\|\mu\|_2+\|\nu\|_2)\sqrt{t}+2\|\mu\|_\infty t\big)&\leq 2e^{-t}, \\ \label{eq:lau_mass_lt}
    \prob\big(\zeta-\xi\leq \|\mu\|_1-\|\nu\|_1-2(\|\mu\|_2+\|\nu\|_2)\sqrt{t}-2\|\nu\|_\infty t\big)&\leq 2e^{-t}.
\end{align}
\end{corollary}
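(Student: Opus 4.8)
The plan is to deduce \eqref{eq:lau_mass_ut} and \eqref{eq:lau_mass_lt} directly from the Laurent--Massart bound (Lemma~\ref{lem:lau_mass}), applied once to the weights $\mu$ and once to the weights $\nu$, and then combined via a union bound. The starting point is to recenter: writing $\zeta - \|\mu\|_1 = \sum_{i=1}^{n_1}\mu_i(X_i^2-1)$ and $\xi - \|\nu\|_1 = \sum_{i=1}^{n_2}\nu_i(Y_i^2-1)$, we see that $(\zeta-\xi) - (\|\mu\|_1 - \|\nu\|_1)$ is a difference of two \emph{independent} centered weighted sums of squared standard Gaussians. Independence of $(X_i)$ and $(Y_i)$ is the only probabilistic ingredient beyond Lemma~\ref{lem:lau_mass}, and it is precisely what allows each of the two sums to be controlled by its own one-sided tail estimate before the two events are intersected.

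For the upper tail, I would first invoke the upper-tail half of Lemma~\ref{lem:lau_mass} for $\mu$, namely $\prob\big(\sum_i\mu_i(X_i^2-1) \ge 2\|\mu\|_2\sqrt t + 2\|\mu\|_\infty t\big) \le e^{-t}$, and then the lower-tail half of Lemma~\ref{lem:lau_mass} for $\nu$, which after negation reads $\prob\big(-\sum_i\nu_i(Y_i^2-1) \ge 2\|\nu\|_2\sqrt t\big) \le e^{-t}$. On the intersection of the two complementary events --- of probability at least $1-2e^{-t}$ by a union bound --- one simply adds the two inequalities to obtain $(\zeta-\xi) - (\|\mu\|_1 - \|\nu\|_1) < 2(\|\mu\|_2 + \|\nu\|_2)\sqrt t + 2\|\mu\|_\infty t$, which rearranges to \eqref{eq:lau_mass_ut}. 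The bound \eqref{eq:lau_mass_lt} follows by the mirror-image argument: apply the lower-tail bound of Lemma~\ref{lem:lau_mass} to $\mu$ and the upper-tail bound to $\nu$, then union-bound.

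I do not expect any real difficulty here, since the statement is an immediate corollary; the only points requiring care are bookkeeping ones. The linear-in-$t$ term comes only from the ``$+$'' side of each one-sided Laurent--Massart inequality, which is why $\|\mu\|_\infty$ appears in \eqref{eq:lau_mass_ut} but $\|\nu\|_\infty$ in \eqref{eq:lau_mass_lt}; and the $\sqrt t$ coefficients merely add, yielding $\|\mu\|_2 + \|\nu\|_2$ rather than the (sharper but unneeded) $\sqrt{\|\mu\|_2^2 + \|\nu\|_2^2}$, which is acceptable since no constants are being optimized.
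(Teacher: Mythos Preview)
Your proposal is correct and is exactly the argument the paper has in mind; the paper in fact gives no proof at all, simply stating that the corollary is immediate from Lemma~\ref{lem:lau_mass}, and your recenter--apply-each-tail--union-bound argument is the canonical way to make that precise. One small remark: you do not actually need independence between $(X_i)$ and $(Y_i)$ for the union bound to go through, since $\prob(A\cup B)\le \prob(A)+\prob(B)$ holds regardless; independence plays no role in your argument beyond the fact that the $X_i$'s are i.i.d.\ among themselves (and likewise the $Y_i$'s), which is what Lemma~\ref{lem:lau_mass} requires.
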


The next lemma give us a distributional equality for terms of the form $\langle g, X g\rangle $ where $g$ is a standard Gaussian vector and $X$ is a permutation matrix. 
\begin{lemma}\label{lem:dist_gaussian_inner}
Let $X\in\calP_n$ and $g=(g_1,\cdots,g_n)$ be a standard Gaussian vector. Then is holds \[\langle g,Xg\rangle\stackrel{d}{=}\sum^{n}_{i=1}\lambda_ig'^2_i,\]
where $\lambda_i$ are the eigenvalues of $\frac12(X+X^T)$ and $g'=(g_1,\cdots,g_n)$ is a vector of independent standard Gaussians. Moreover, if $|S_X|=s_xn$ for $s_x\in(0,1]$,   $\mu\in \mathbb{R}^{n_1}$ is a vector containing the positive eigenvalues of $\frac12(X+X^T)$, and $-\nu\in \mathbb{R}^{n_2}$ is a vector containing the negative eigenvalues of $\frac12(X+X^T)$, then
\begin{align*}
    \|\mu\|_1-\|\nu\|_1&=s_xn,\\
   \sqrt{n}\leq\|\mu\|_2+\|\nu\|_2&\leq \sqrt{2n},\\
    \|\mu\|_\infty,\|\nu\|_\infty&\leq 1 .
\end{align*}
\end{lemma}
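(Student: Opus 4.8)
The plan is to reduce the distributional identity to the rotational invariance of the standard Gaussian, and to read off the three norm inequalities from elementary trace identities together with the cycle structure of the permutation $X$.

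\textbf{Distributional identity.} Since $\langle g, Xg\rangle = g^{\top}Xg$ is a scalar it equals its transpose $g^{\top}X^{\top}g$, hence $\langle g, Xg\rangle = g^{\top}Sg$ with $S := \tfrac12(X + X^{\top})$ real symmetric. Writing the spectral decomposition $S = Q\Lambda Q^{\top}$ with $Q$ orthogonal and $\Lambda = \diag(\lambda_1,\dots,\lambda_n)$, we get $g^{\top}Sg = (Q^{\top}g)^{\top}\Lambda(Q^{\top}g) = \sum_{i=1}^n \lambda_i (Q^{\top}g)_i^2$, and $g' := Q^{\top}g$ is again a standard Gaussian vector because $Q$ is orthogonal, which yields $\langle g, Xg\rangle \stackrel{d}{=}\sum_i \lambda_i g_i'^2$. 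Splitting the sum according to the sign of $\lambda_i$ (the zero eigenvalues contribute nothing), collecting the positive values into $\mu$ and the moduli of the negative ones into $\nu$, produces the form $\sum_i \mu_i g_i^2 - \sum_i \nu_i g_i'^2$ with two independent i.i.d.\ Gaussian families, since the two groups of coordinates of $g'$ are disjoint.

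\textbf{Norm bounds.} For the $\ell_1$ gap, $\|\mu\|_1 - \|\nu\|_1 = \sum_i \lambda_i = \Tr(S) = \Tr(X) = |S_X| = s_x n$, using $\Tr(X^{\top}) = \Tr(X)$. For the $\ell_\infty$ bound, permutation matrices are orthogonal, so $\|X\|_{\mathrm{op}} = \|X^{\top}\|_{\mathrm{op}} = 1$, and the triangle inequality gives $\|S\|_{\mathrm{op}} \le 1$, i.e.\ $|\lambda_i| \le 1$ for all $i$, hence $\|\mu\|_\infty, \|\nu\|_\infty \le 1$. For the $\ell_2$ quantities, compute $\|\mu\|_2^2 + \|\nu\|_2^2 = \sum_i \lambda_i^2 = \Tr(S^2) = \tfrac14\Tr\!\big((X + X^{\top})^2\big) = \tfrac14\big(\Tr(X^2) + 2\Tr(XX^{\top}) + \Tr((X^{\top})^2)\big) = \tfrac12\big(n + \Tr(X^2)\big)$, using $XX^{\top} = \id$ and $\Tr((X^{\top})^2) = \Tr(X^2)$. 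Since $0 \le \Tr(X^2) \le n$, this quantity lies in $[n/2, n]$, so $(\|\mu\|_2 + \|\nu\|_2)^2 \le 2(\|\mu\|_2^2 + \|\nu\|_2^2) \le 2n$, giving the upper bound.

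\textbf{The delicate point.} The lower bound $\|\mu\|_2 + \|\nu\|_2 \ge \sqrt n$ is the only place where a crude inequality does not suffice: $\sqrt x + \sqrt y \ge \sqrt{x+y}$ only gives $\|\mu\|_2 + \|\nu\|_2 \ge \sqrt{\Tr(S^2)} \ge \sqrt{n/2}$. To recover the missing factor one exploits that, for a permutation, the nonzero part of the spectrum of $S = \tfrac12(X+X^{\top})$ is (nearly) symmetric about $0$: decomposing $X$ into disjoint cycles, a fixed point contributes eigenvalue $1$, a transposition contributes $\{1,-1\}$, and a $k$-cycle with $k\ge 3$ contributes the eigenvalues $\cos(2\pi j/k)$, $j = 0,\dots,k-1$, whose positive and negative parts carry nearly equal $\ell_2$ mass. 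Summing the per-cycle contributions and using that this balance makes $\|\mu\|_2 \approx \|\nu\|_2$ — so the slack in $\sqrt x + \sqrt y \ge \sqrt{x+y}$ is close to $\sqrt2$ — yields $\|\mu\|_2 + \|\nu\|_2 \ge \sqrt n$. I expect this cycle-by-cycle spectral bookkeeping to be the main (and only genuine, if modest) obstacle; everything else follows immediately from the trace identities above.
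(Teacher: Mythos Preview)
Your treatment of the distributional identity, the $\ell_1$ gap, the $\ell_\infty$ bound, and the upper $\ell_2$ bound is correct and matches the paper's approach essentially line for line (spectral decomposition of $S=\tfrac12(X+X^\top)$, rotational invariance, trace for $\ell_1$, operator norm for $\ell_\infty$). Your computation $\|\mu\|_2^2+\|\nu\|_2^2=\Tr(S^2)=\tfrac12\big(n+\Tr(X^2)\big)$ is in fact more careful than the paper's, which asserts $\|\mu\|_2^2+\|\nu\|_2^2=n$; that equality holds only when $X^2=\id$ and is false in general. Your Cauchy--Schwarz route $(\|\mu\|_2+\|\nu\|_2)^2\le 2(\|\mu\|_2^2+\|\nu\|_2^2)\le 2n$ is a clean variant of the paper's equivalent maximization of $\sqrt{x}+\sqrt{c-x}$.

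The lower bound $\|\mu\|_2+\|\nu\|_2\ge\sqrt n$ is where both your sketch and the paper's proof break, and the reason is structural: the inequality is \emph{false} as stated. Take $n=3k+1$ consisting of one fixed point and $k$ disjoint $3$-cycles (so $s_x=1/n>0$). Each $3$-cycle contributes eigenvalues $1,-\tfrac12,-\tfrac12$ to $S$, giving $\mu=(1,\dots,1)\in\matR^{k+1}$ and $\nu=(\tfrac12,\dots,\tfrac12)\in\matR^{2k}$, hence $\|\mu\|_2+\|\nu\|_2=\sqrt{k+1}+\sqrt{k/2}\sim(1+2^{-1/2})\sqrt k$, while $\sqrt n\sim\sqrt3\,\sqrt k$; since $1+2^{-1/2}\approx1.707<\sqrt3\approx1.732$, the bound fails for $k$ large (already at $k=10$: $\sqrt{11}+\sqrt{5}\approx5.553<\sqrt{31}\approx5.568$). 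So your cycle-by-cycle bookkeeping cannot rescue the claim, and the paper's own argument---which rests on the erroneous $\|\mu\|_2^2+\|\nu\|_2^2=n$---does not prove it either. The good news is that this lower bound is never invoked anywhere else in the paper; only the upper bound $\sqrt{2n}$ is fed into Corollary~\ref{cor:lau_mass}. You can safely drop the lower bound from the statement.
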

\begin{proof}
Notice that $\langle g,Xg\rangle=\langle g,\frac12(X+X^T)g\rangle$ and given the symmetry of the matrix $\frac12(X+X^T)$ all its eigenvalues are real. Take its SVD decomposition $\frac12(X+X^T)=V\Lambda V^T$. We have that \begin{align*}
    \langle g,\frac12(X+X^T)g\rangle&= (V^Tg)^T\Lambda V^Tg
    \stackrel{d}{=}\sum^n_{i=1}\lambda_ig'^2_i
\end{align*}
using the rotation invariance of the standard Gaussian vectors. Notice that \[|S_X|=Tr(X)=Tr\left(\frac12(X+X^T)\right)=\sum^n_{i=1}\lambda_i\]
which leads to \[\|\mu\|_1-\|\nu\|_1=\sum^n_{i=1}\lambda_i=|S_X|=s_xn.\]
The fact that $\|\mu\|_\infty,\|\nu\|_\infty\leq 1$ follows easily since $X$ is a unitary matrix. The inequality $\|\mu\|_2+\|\nu\|_2\geq \sqrt{n}$ follows from the fact that $\|\mu\|_2^2+\|\nu\|_2^2=n$. From the latter, we deduce that $\|\mu\|_2+\|\nu\|_2\leq \sqrt{\|\mu\|^2_2}+\sqrt{n-\|\mu\|^2_2}\leq 2\sqrt{\frac{n}2}$, and the result follows. 
\end{proof}

\subsection{Concentration inequalities used in Theorem \ref{thm:unif_rec_ppm}}\label{sec:app_thm3}
In this section we provide proofs of Lemma's \ref{lem:nb_ngbh1_mt} and \ref{lem:nb_ngbh2_mt} used to prove Theorem \ref{thm:unif_rec_ppm}.

\begin{proof}[Proof of Lemma's \ref{lem:nb_ngbh1_mt} and \ref{lem:nb_ngbh2_mt}.]
Recall that $B_{ij}= \sqrt{1-\sigma^2}A_{ij}+\sigma Z_{ij}$.
\paragraph{Step 1.} First let us consider the terms of the form $\langle A_{i:},A_{i:} \rangle$. We can write 
\[ \langle A_{i:},A_{i:} \rangle = \sum_{i=1}^{n-1}\mu_i g_i^2\]
where $g_i$ are independent standard Gaussian random variables and $\mu_i =1/n$ for all $i$. Observe that $||\mu||_2=\sqrt{\frac{n-1}{n^2}}$.
By Lemma \ref{lem:lau_mass} we have for $i \in [n]$ and all $t>0$
\[ \prob\left(\langle A_{i:},A_{i:} \rangle \leq \frac{n-1}{n}-2\sqrt{\frac{t(n-1)}{n^2}}\right)\leq e^{-t}.\]
For the choice $t= 5\log n$ we obtain \[ \langle A_{i:},A_{i:} \rangle \geq 1-O\left(\sqrt{\frac{\log n}{n}}\right)\] with probability at least $1-e^{-5\log n}$.

\paragraph{Step 2.} Let us consider now terms of the form $\langle A_{i:},Z_{i:} \rangle$. We can write 
\[ \langle A_{i:},Z_{i:} \rangle = \frac{1}{n}\sum_{i=1}^{n-1} (g_ig_i') = \frac{1}{n} G^\top G' \]
where $G=(g_i)_{i=1}^{n-1}$ and $G'=(g'_i)_{i=1}^{n-1}$ are i.i.d. standard Gaussian random variables. We can write \[ G^\top G'= \norm{G}\left( \left(\frac{G}{\norm{G}}\right)^\top G'\right).\] Since $G'$ is invariant by rotation $(\frac{G}{\norm{G}})^\top G'$ is independent from $G$ and has distribution $\calN(0,1)$. By Gaussian concentration inequality we hence have \[ \left(\frac{G}{\norm{G}}\right)^\top G' \leq C\sqrt{\log n}\] with probability at least $1-e^{-5\log n}$ for a suitable choice of $C$. Similarly, by Lemma \ref{lem:lau_mass} we have \[ \norm{G} \leq 2\sqrt{n} \] with probability at least $1-e^{-5\log n}$. Hence with probability at least $1-2e^{-5\log n}$ we have \[ \frac{1}{n} G^\top G' \leq 2C\sqrt{\frac{\log n}{n}}.\]

\paragraph{Step 3.} The same argument can be used to show that for $i\neq j$
 \[ \prob\left(\langle A_{i:},A_{j:} \rangle \geq C\sqrt{\frac{\log n}{n}} \right)\leq e^{-5\log n}.\]

\paragraph{Conclusion.} We can conclude by using the identity $B_{ij}= \sqrt{1-\sigma^2}A_{ij}+\sigma Z_{ij}$ and taking the union bound over all indices $i\neq j$.
\end{proof}

\end{document}